\DeclareMathOperator{\Div}{Div}
\DeclareMathOperator{\Hom}{Hom}
\DeclareMathOperator{\Gal}{Gal}
\DeclareMathOperator{\res}{res}
\DeclareMathOperator{\Spec}{Spec}
\DeclareMathOperator{\Char}{char}
\DeclareMathOperator{\td}{td}
\DeclareMathOperator{\tor}{tor}
\newtheorem{thm}{Theorem}
\newtheorem{prop}{Proposition}[section]
\newtheorem{lem}[prop]{Lemma}
\newtheorem{fact}[prop]{Fact}
\newtheorem{cor}[prop]{Corollary}
\theoremstyle{definition}
\newtheorem{defn}[prop]{Definition}
\newtheorem{example}[prop]{Example}
\newtheorem{remark}[prop]{Remark}
\numberwithin{equation}{section}
\newcommand{\F}{\mathbb{F}}
\newcommand{\Z}{\mathbb{Z}}
\newcommand{\Pbb}{\mathbb{P}}
\newcommand{\Abb}{\mathbb{A}}
\newcommand{\Q}{\mathbb{Q}}
\newcommand{\C}{\mathbb{C}}
\newcommand{\Gc}{\mathcal{G}}
\newcommand{\Hc}{\mathcal{H}}
\newcommand{\Ac}{\mathcal{A}}
\newcommand{\Oc}{\mathcal{O}}
\newcommand{\Cc}{\mathcal{C}}
\newcommand{\Sc}{\mathcal{S}}
\newcommand{\Vc}{\mathcal{V}}
\newcommand{\Tc}{\mathcal{T}}
\newcommand{\mf}{\mathfrak{m}}
\newcommand{\Uf}{\mathfrak{U}}
\newcommand{\Uc}{\mathcal{U}}
\newcommand{\Ib}{\mathbf{I}}
\newcommand{\Hb}{\mathbf{H}}
\newcommand{\poclaim}{\noindent\emph{Proof of Claim: }}
\newcommand{\Kl}{K^{\times \ell}}
\begin{document}

\title{Almost-Commuting-Liftable Subgroups of Galois Groups}
\author{Adam Topaz}
\address{Department of Mathematics, University of Pennsylvania, 209 S. 33rd Street, Philadelphia, PA 19104}
\date{Feb. 27th 2012}
\email{atopaz@math.upenn.edu}
\urladdr{www.math.upenn.edu/$\sim$atopaz}
\subjclass[2010]{12E30, 12F10, 12G05, 12J25}
\keywords{local theory, valuations, pro-$\ell$ Galois theory, Galois cohomology, abelian-by-central}

\begin{abstract}
Let $K$ be a field and $\ell$ be a prime such that $\Char K \neq \ell$.
In the presence of sufficiently many roots of unity in $K$, we show how to recover some of the inertia/decomposition structure of valuations inside the maximal $(\Z/\ell)$-abelian resp. pro-$\ell$-abelian Galois group of $K$ using its $(\Z/\ell)$-central resp. pro-$\ell$-central extensions.
\end{abstract}

\maketitle
%\tableofcontents

\section{Introduction}

The first key step in most strategies towards anabelian geometry is to develop a {\bf local theory}, by which one recovers decomposition groups of ``points'' using the given Galois theoretic information.
In the context of anabelian curves, one should eventually detect decomposition groups of closed points of the given curve within its \'etale fundamental group.
On the other hand, in the birational setting, this corresponds to detecting decomposition groups of arithmetically and/or geometrically meaningful places of the function field under discussion within its absolute Galois group. 
The first instance of such a local theory is Neukirch's group-theoretical characterization of decomposition groups of finite places of global fields;\footnote{This actually predates Grothendieck's anabelian geometry.} indeed, this was the basis for the celebrated Neukirch-Uchida theorem \cite{Neukirch1969}, \cite{Neukirch1969a}, \cite{Uchida1976}.
The Neukirch-Uchida theorem was expanded by Pop to all higher dimensional finitely generated fields by developing a local theory based on his $q$-Lemma \cite{Pop1994}.
The $q$-Lemma deals with the {\bf absolute pro-$q$ Galois theory}\footnote{Namely, fields whose absolute Galois group is a pro-$q$-group.} of fields and, as with Neukirch's result, works only in arithmetical situations.

On the other hand, at about the same time, two non-arithmetically based methods were proposed which recover inertia and decomposition groups of valuations from the {\bf relative pro-$\ell$ Galois theory} ($\ell$ a fixed prime) of a field whose characteristic is different from $\ell$.
The first relies on the theory of {\bf rigid elements} which was developed by several authors (see the details below).
This theory requires only that the field under discussion has characteristic different from $\ell$ and that it contains $\mu_\ell$; the input, however, must be the {\bf maximal pro-$\ell$ Galois group} of the field (cf. Engler-Koenigsmann \cite{Engler1998}).
Nevertheless, this method eventually led to the characterization of solvable absolute Galois groups of fields by Koenigsmann \cite{Koenigsmann2001}, and also the characterization of maximal pro-$\ell$ Galois groups of small rank \cite{Koenigsmann1998}, \cite{Efrat1998}.

The second method is Bogomolov and Tschinkel's theory of {\bf commuting-liftable pairs in Galois groups} \cite{Bogomolov2007}.\footnote{This theory was originally proposed by Bogomolov \cite{Bogomolov1991}.}
Its input is the much simpler {\bf maximal pro-$\ell$ abelian-by-central Galois group},\footnote{Terminology by Pop \cite{Pop2010}.} but it requires that the base field contain an {\bf algebraically closed subfield}.
Nevertheless, this theory was a key technical tool in the local theory needed to settle Bogomolov's program in birational anabelian geometry for function fields over the algebraic closure of finite fields -- see Bogomolov-Tschinkel \cite{Bogomolov2008a} in dimension $2$ and Pop \cite{Pop2011} in general. 

It goes without saying, the experts believed that the two approaches -- that of rigid elements versus that of commuting-liftable pairs -- were unrelated.
However, Pop suggested in his Oberwolfach report \cite{Pop2006a} that the two methods should be linked, even in the analogous $(\Z/\ell^n)$-abelian-by-central situation, but unfortunately never followed up with the details.
Also, the work done by Mah\'e, Min\'a\v{c} and Smith \cite{Mah'e2004} in the $(\Z/2)$-abelian-by-central situation, and Efrat-Min\'a\v{c} \cite{Efrat2011a} in special cases of the $(\Z/\ell)$-abelian-by-central situation suggest a connection between the two methods in this analogous context.

This paper aims to give a unifying approach for the two methods.
At the same time, we provide simpler arguments for the pro-$\ell$ abelian-by-central assertions and prove more general versions of these assertions which assume only that the field contains $\mu_{\ell^\infty}$.\footnote{Compare with \cite{Bogomolov2007} where the existence of an algebraically closed subfield is essential in the proof.}
For readers' sake, we give a short overview of the results mentioned above and see how the results of this paper fit into the larger context.

\subsection{Overview}

Let $K$ be a field with $\Char K \neq \ell$ which contains the $\ell^{{\rm th}}$ roots of unity $\mu_\ell \subset K$.
Denote by $K(\ell)$ the maximal pro-$\ell$ Galois extension of $K$ (inside a chosen separable closure of $K$) so that $\Gc_K := \Gal(K(\ell)|K)$ is the maximal pro-$\ell$ quotient of the absolute Galois group $G_K$ of $K$.
Let $w$ be a valuation of $K(\ell)$ and denote by $v = w|_K$ its restriction to $K$; denote by $k(w)$ the residue field of $w$ and $k(v)$ the residue field of $v$.
We denote by $I_{w|v} \leq D_{w|v} \leq \Gc_K$ the inertia resp. decomposition subgroup of $w|v$ inside $\Gc_K$.
Recall that $D_{w|v} / I_{w|v} = \Gc_{k(v)}$ and that the canonical short exact sequence 
\[ 1 \rightarrow I_{w|v} \rightarrow D_{w|v} \rightarrow \Gc_{k(v)} \rightarrow 1 \]
is split.
Moreover, if $\Char k(v) \neq \ell$, then $I_{w|v}$ is a free abelian pro-$\ell$ group of the same rank as $v(K^\times)/\ell$, and the action of $\Gc_{k(v)}$ on $I_{w|v}$ factors via the $\ell$-adic cyclotomic character.
Thus, if $\Char k(v) \neq \ell$, and $\sigma \in I_{w|v}$, $\tau \in D_{w|v}$ are given non-torsion elements so that the closed subgroup $\langle \sigma,\tau \rangle$\footnote{We denote by $\langle S \rangle$ the closed subgroup generated by $S$} is non-pro-cyclic, then ${\langle \sigma,\tau \rangle} = {\langle \sigma \rangle} \rtimes {\langle \tau \rangle} \cong \Z_\ell \rtimes \Z_\ell$ is a semi-direct product.

In a few words, the theory of rigid elements in the context of pro-$\ell$ Galois groups \cite{Engler1998} asserts that the only way the situation above can arise is from valuation theory, as described above.
More precisely, let $K$ be a field such that $\Char K \neq \ell$ and $\mu_{\ell} \subset K$.
If $\sigma,\tau \in \Gc_K$ are non-torsion elements such that ${\langle \sigma,\tau \rangle} = {\langle \sigma \rangle} \rtimes {\langle \tau \rangle}$ is non-pro-cyclic, then there exists a valuation $w$ of $K(\ell)$ such that, denoting $v = w|_K$, one has $\Char k(v) \neq \ell$, $v(K^\times) \neq v(K^{\times \ell})$, $\sigma \in I_{w|v}$ and $\tau \in D_{w|v}$.
The key idea in this situation is the explicit ``creation'' of valuation rings inside $K$ using rigid elements and so-called ``$\ell$-rigid calculus'' developed in \cite{Koenigsmann1995}.
Indeed, under the assumption that $\Gc_K = {\langle \sigma,\tau \rangle} = {\langle \sigma \rangle} \rtimes {\langle \tau \rangle}$ as above, \cite{Engler1998} shows that $K$ has sufficiently many ``strongly-rigid elements'' to produce an $\ell$-Henselian valuation $v$ of $K$ with $v(K^\times) \neq v(K^{\times \ell})$ and $\Char k(v) \neq \ell$, using \cite{Koenigsmann1995}.
Rigid elements were first considered by Ware \cite{Ware1981}, then further developed in the context of valuation theory by Arason-Elman-Jacob in \cite{Arason1987}, Koenigsmann in \cite{Koenigsmann1995}, Engler-Koenigsmann in \cite{Engler1998}, Efrat in \cite{Efrat1999}, \cite{Efrat2007} and also by others.

Assume, on the other hand, that $\mu_{\ell^\infty} \subset K$.
In this case, we denote by 
\[ \Pi_K^a := \frac{\Gc_K}{[\Gc_K,\Gc_K]}, \ \text{ and } \ \Pi_K^c := \frac{\Gc_K}{[\Gc_K,[\Gc_K,\Gc_K]]} \]
the maximal pro-$\ell$ abelian resp. maximal pro-$\ell$ abelian-by-central Galois groups of $K$ -- this terminology was introduced by Pop \cite{Pop2010}.
In the above context, assume again that $\Char k(v) \neq \ell$, then the $\ell$-adic cyclotomic character of $K$ (and of $k(v)$) is trivial. 
Hence, $\Gc_{k(v)}$ acts trivially on $I_{w|v}$; we conclude that $D_{w|v} \cong I_{w|v} \times \Gc_{k(v)}$ and recall that $I_{w|v}$ is abelian.
Denote by $K^{ab}$ the Galois extension of $K$ such that $\Gal(K^{ab}|K) = \Pi_K^a$, $v^{ab} := w|_{K^{ab}}$, $\widehat D_v := D_{v^{ab}|v}$ and $\widehat I_v := I_{v^{ab}|v}$; since $\Pi_K^a$ is abelian, $\widehat D_v$ and $\widehat I_v$ are independent of choice of $w$.
We deduce that for all $\sigma \in \widehat I_v$ and $\tau \in \widehat D_v$, there exist lifts $\tilde\sigma,\tilde\tau \in \Pi_K^c$ of $\sigma,\tau \in \Pi_K^a$ which commute in $\Pi_K^c$; since $\Pi_K^c$ is a central extension of $\Pi_K^a$, we conclude that {\bf any lifts} $\tilde\sigma,\tilde\tau \in \Pi_K^c$ of $\sigma,\tau \in \Pi_K^a$ commute as well -- such a pair $\sigma,\tau \in \Pi_K^a$ is called {\bf commuting-liftable}.

Bogomolov and Tschinkel's theory of commuting-liftable pairs \cite{Bogomolov2007} asserts that, under the assumption that $K$ contains an algebraically closed subfield $k = \bar k$, the only way a commuting pair can arise is via a valuation as described above.\footnote{It turns out that $\Char k(v) \neq \ell$ is not needed in order to produce a commuting-liftable pair, under a modified notion of decomposition and inertia. It turns out that valuations with residue characteristic equal to $\ell$ can and do arise from commuting-liftable pairs, as we will see in this paper.}
The method of loc.cit. uses the notion of a ``flag function;'' in particular, this is a homomorphism $K^\times \rightarrow \Z_\ell$ which corresponds, via Kummer theory, to an element in $\widehat I_v$ for some valuation $v$.\footnote{We generalize the notion of a flag function in this paper -- see, e.g., Remark \ref{rem:flag-funcs}.}
One then considers $\sigma,\tau$ as elements of $\Hom(K^\times,\Z_\ell) = \Hom(K^\times/k^\times,\Z_\ell)$ via Kummer theory, and produces the corresponding map:
\[ \Psi = (\sigma,\tau) : K^\times/k^\times \rightarrow \Z_\ell^2 \subset \Abb^2(\Q_\ell). \]
When one views $K^\times/k^\times = \Pbb_k(K)$ as an infinite dimensional projective space over $k$, the assumption that $\sigma,\tau$ are commuting liftable ensures that $\Psi$ sends {\bf projective lines} to {\bf affine lines}.
This severe restriction on $\Psi$ is then used to show that some $\Z_\ell$-linear combination of $\sigma$ and $\tau$ is a flag function.

As mentioned above, the theory of commuting-liftable pairs was originally outlined by Bogomolov in \cite{Bogomolov1991}, where he also introduced a program in birational anabelian geometry for fields of purely geometric nature -- i.e. function fields over an algebraically closed field of characteristic different from $\ell$ and dimension $\geq 2$ -- which aims to reconstruct such function fields $K$ from the Galois group $\Pi_K^c$.
If $\Char K > 0$, the above technical theorem eventually allows one to detect the decomposition and inertia subgroups of {\bf quasi-divisorial valuations} inside $\Pi_K^a$ using the group-theoretical structure encoded in $\Pi_K^c$ (see Pop \cite{Pop2010}).
In particular, for function fields $K$ over the algebraic closure of a finite field, one can detect the decomposition/inertia structure of {\bf divisorial valuations} inside $\Pi_K^a$ using $\Pi_K^c$.
While Bogomolov's program in its full generality is far from being complete, it has been carried through for function fields $K|k$, $k = \overline \F_p$ over the algebraic closure of a finite field -- by Bogomolov-Tschinkel \cite{Bogomolov2008a} in dimension 2, and by Pop \cite{Pop2011} in general.

In this paper, we obtain analogous results to those in the theory of commuting-liftable pairs, for both the $(\Z/\ell)$-abelian-by-central and the pro-$\ell$-abelian-by-central situations, by elaborating on and using the theory of rigid elements, while working under less restrictive assumptions than Bogomolov and Tschinkel's approach.
In particular, we reprove and generalize the main results of \cite{Bogomolov2007} using this method.
We begin by introducing some technical assumptions and notation.

\subsection{Notation}
For the remainder of this section, $\ell$ will denote a fixed prime, and $K$ a field whose characteristic is different from $\ell$.
A ``subgroup'' in the context of profinite groups will always mean a closed subgroup.

Assume that $\mu_\ell \subset K$.
In this case, we denote by $\Gc_K^a$ the maximal $(\Z/\ell)$-abelian and $\Gc_K^c$ the maximal $(\Z/\ell)$-abelian-by-central Galois groups of $K$.
Explicitly, denote by $\Gc_K^{(2)} := [\Gc_K,\Gc_K] \cdot (\Gc_K)^\ell$ and $\Gc_K^{(3)} = [\Gc_K,\Gc_K^{(2)}] \cdot (\Gc_K^{(2)})^\ell$, then 
\[ \Gc_K^a := \Gc_K/\Gc_K^{(2)}, \ \text{ and } \ \Gc_K^c := \Gc_K/\Gc_K^{(3)}. \]
The canonical projection $\pi : \Gc_K^c \twoheadrightarrow \Gc_K^a$ induces the following maps.
First, $[\bullet,\bullet] : \Gc_K^a \times \Gc_K^a \rightarrow \ker \pi$ defined by $[\sigma,\tau] = \tilde\sigma^{-1}\tilde\tau^{-1}\tilde\sigma\tilde\tau$ where $\tilde\sigma,\tilde\tau \in \Gc_K^c$ are some lifts of $\sigma,\tau \in \Gc_K^a$; since $\pi$ is a central extension, this is well-defined but moreover it is $(\Z/\ell)$-bilinear.
Second, $(\bullet)^\ell : \Gc_K^a \rightarrow \ker \pi$ defined by $\sigma^\ell = \tilde\sigma^\ell$ where, again, $\tilde\sigma \in \Gc_K^c$ is some lift of $\sigma \in \Gc_K^a$, and $\tilde\sigma^\ell$ denotes the $\ell^{\rm th}$-power of $\tilde\sigma$ in $\Gc_K^c$; since $\pi$ is a central extension with kernel killed by $\ell$, this map is well defined and, if $\ell \neq 2$, this map is $(\Z/\ell)$-linear.
To simplify the exposition, we denote by $\sigma^\beta = \sigma^\ell$ if $\ell \neq 2$ and $\sigma^\beta = 0 \in \ker \pi$ if $\ell = 2$ -- thus $(\bullet)^\beta$ is a linear map $\Gc_K^a \rightarrow \ker \pi$ regardless of $\ell$.
For a (closed) subgroup $A \leq \Gc_K^a$, we denote by
\[ \Ib(A) = \{ \sigma \in A \ : \ \forall \tau \in A, \ [\sigma,\tau] \in A^\beta \}. \]
Then $\Ib(A)$ is a subgroup of $A$ -- this is the so-called ``almost-commuting-liftable-center'' of $A$.

Suppose $v$ is a valuation of $K$.
We denote by $K'$ the Galois extension of $K$ such that $\Gal(K'|K) = \Gc_K^a$, and pick a prolongation $v'$ of $v$ to $K'$.
We denote by $D_v := D_{v'|v}$ and $I_{v} = I_{v'|v}$ the decomposition and inertia subgroups of $v'|v$ inside $\Gc_K^a$; since $\Gc_K^a$ is abelian, these groups are independent of choice of $v'$.
Moreover, we introduce the {\bf minimized} decomposition and inertia subgroups: 
\[ D_v^1 := \Gal(K'|K(\sqrt[\ell]{1+\mf_v})), \ \text{ and } \ I_v^1 := \Gal(K'|K(\sqrt[\ell]{\Oc_v^\times})). \]
Observe that $I_v^1 \leq D_v^1$; more importantly, however, $I_v^1 \leq I_v$ and $D_v^1 \leq D_v$ with equality whenever $\Char k(v) \neq \ell$ (see Proposition \ref{prop:decomp-thy-prop}).

On the other hand, suppose that $\mu_{\ell^\infty} \subset K$.
The canonical projection $\widehat \pi : \Pi_K^c \rightarrow \Pi_K^a$ induces a map
$[\bullet,\bullet] : \Pi_K^a \times \Pi_K^a \rightarrow \ker \widehat \pi$ defined by $[\sigma,\tau] = \tilde\sigma^{-1}\tilde\tau^{-1}\tilde\sigma\tilde\tau$ for some lifts $\tilde\sigma,\tilde\tau \in \Pi_K^c$ of $\sigma,\tau \in \Pi_K^a$; similarly to above, this map is well-defined and $\Z_\ell$-linear. 
For a closed subgroup $\widehat A \leq \Pi_K^a$, we denote by 
\[ \Ib(\widehat A) = \{ \sigma \in \widehat A \ : \ \forall \tau \in \widehat A, \ [\sigma,\tau] = 0 \}. \]
Again, $\Ib(\widehat A)$ is a subgroup of $\widehat A$ and can be seen as the ``commuting-liftable-center'' of $\widehat A$.

Suppose again that $v$ is a valuation of $K$.
We denote by $K^{ab}$ the Galois extension of $K$ such that $\Gal(K^{ab}|K) = \Pi_K^a$, and pick a prolongation $v^{ab}$ of $v$ to $K^{ab}$.
As above, we denote by $\widehat D_v := D_{v^{ab}|v}$ and $\widehat I_{v} = I_{v^{ab}|v}$ the decomposition and inertia subgroups of $\Pi_K^a$ associated to $v^{ab}|v$; since $\Pi_K^a$ is abelian, these groups are independent of choice of $v^{ab}$.
Moreover, we introduce the {\bf minimized} decomposition and inertia subgroups in this context:
\[ \widehat D_v^1 := \Gal(K^{ab}|K(\sqrt[\ell^\infty]{1+\mf_v})), \ \text{ and } \ \widehat I_v^1 := \Gal(K^{ab}|K(\sqrt[\ell^\infty]{\Oc_v^\times})). \]
As in the $\Z/\ell$-case, one has $\widehat I_v^1 \leq \widehat D_v^1$; more importantly, $\widehat I_v^1 \leq \widehat I_v$ and $\widehat D_v^1 \leq \widehat D_v$ with equality whenever $\Char k(v) \neq \ell$ (see Proposition \ref{prop:decomp-thy-prop}).

\subsection{Main Results of the Paper}
\label{sec:main-results-manuscr}

We are now ready to summarize the main results of this paper.
The following theorem summarizes Theorems \ref{thm:acl-groups} and \ref{thm:main-acl}.

\begin{thm}
Let $K$ be a field such that $\Char K \neq \ell$ and $\mu_{2\ell} \subset K$.
Let $Z \leq \Gc_K^a$ be given and consider $\Ib(Z) \leq Z$ as defined above.
Then the following hold:
\begin{itemize}
\item If $\Ib(Z) = Z$, there exists a valuation $v$ of $K$ and a subgroup $B \leq Z$ such that $Z/B$ is cyclic, $Z \leq D_v^1 \leq D_v$ and $B \leq I_v^1 \leq I_v$.
\item If $\Ib(Z) \neq Z$, there exists a valuation $v$ of $K$ such that $Z \leq D_v^1 \leq  D_v$ and $\Ib(Z) = Z \cap I_v^1 \leq Z \cap I_v$.
\end{itemize}

Assume moreover that $\mu_{\ell^\infty} \subset K$.
Let $\widehat Z \leq \Pi_K^a$ be given and assume that $\Pi_K^a / \widehat Z$ is torsion-free.
Consider $\Ib(\widehat Z) \leq \widehat Z$ as defined above.
Then the following hold:
\begin{itemize}
\item If $\Ib(\widehat Z) = \widehat Z$, there exists a valuation $v$ of $K$ and a subgroup $\widehat B \leq \widehat Z$ such that $\widehat Z/\widehat B$ is pro-$\ell$-cyclic, $\widehat Z \leq \widehat D_v^1 \leq \widehat D_v$ and $\widehat B \leq \widehat I_v^1 \leq \widehat I_v$.
\item If $\Ib(\widehat Z) \neq \widehat Z$, there exists a valuation $v$ of $K$ such that $\widehat Z \leq \widehat D_v^1 \leq  \widehat D_v$ and $\Ib(\widehat Z) = \widehat Z \cap \widehat I_v^1 \leq \widehat Z \cap \widehat I_v$.
\end{itemize}
\end{thm}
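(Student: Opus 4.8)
The plan is to transport everything to the multiplicative group of $K$ via Kummer theory, to read off a rigidity property from the almost-commuting-liftable hypothesis, and then to feed this into the rigid-element calculus of \cite{Koenigsmann1995} (in the systematized form of \cite{Engler1998}) to manufacture the valuation. Since $\mu_\ell\subset K$, Kummer theory identifies $\Gc_K^a$ with the Pontryagin dual of $K^\times/\Kl$, turning closed subgroups of $\Gc_K^a$ into subgroups of $K^\times$ containing $\Kl$ (inclusion-reversingly). Write $\Delta$ for the subgroup attached to $Z$, i.e.\ $\Delta=\{x\in K^\times:\chi(x)=0\text{ for all }\chi\in Z\}$, so that $Z=\Gal(K'|K(\sqrt[\ell]{\Delta}))$, and write $\Delta'\supseteq\Delta$ for the subgroup attached to $\Ib(Z)$. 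Unwinding the definitions of the minimized decomposition/inertia subgroups one gets a purely multiplicative dictionary: $Z\leq D_v^1$ iff $1+\mf_v\subseteq\Delta$; a subgroup $B\leq Z$ attached to $\Delta_B\supseteq\Delta$ satisfies $B\leq I_v^1$ iff $\Oc_v^\times\subseteq\Delta_B$, and then $Z/B$ is cyclic iff $[\Delta_B:\Delta]\leq\ell$ (both groups contain $\Kl$, so $\Delta_B/\Delta$ is an $\F_\ell$-space); and $Z\cap I_v^1$ is attached to $\Delta\cdot\Oc_v^\times$. Hence the $(\Z/\ell)$-assertions reduce to: produce a valuation $v$ of $K$ with $1+\mf_v\subseteq\Delta$ such that $\Delta\cdot\Oc_v^\times=\Delta'$ when $\Ib(Z)\neq Z$, respectively $[\,\Delta\cdot\Oc_v^\times:\Delta\,]\leq\ell$ when $\Ib(Z)=Z$ (where $\Delta'=\Delta$). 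The degenerate cases where $[K^\times:\Delta]$ is too small to support a non-trivial valuation are handled directly by the trivial valuation, so we may assume $\Delta$ has large index.

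Next I would translate the hypothesis on $\Ib(Z)$ into rigidity. By the Merkurjev--Suslin theorem together with the standard description of $\Gc_K^{(2)}/\Gc_K^{(3)}$, the commutator pairing $[\bullet,\bullet]$ and the power map $(\bullet)^\beta$ on $\Gc_K^a$ are dual, under Kummer theory, to the cup product on $H^1(\Gc_K,\Z/\ell)=K^\times/\Kl$ modulo the Steinberg relations $\{x,1-x\}$. Concretely, $\sigma\in\Ib(Z)$ exactly when, for every $\tau\in Z$ and every $x\in K^\times\setminus\{1\}$, the determinant $\sigma(x)\tau(1-x)-\sigma(1-x)\tau(x)$ is controlled by the $\beta$-terms $\sigma^\beta,\tau^\beta$. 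Letting $\sigma,\tau$ range over $Z$ and re-expressing this on the quotient $K^\times/\Delta$, the hypothesis forces a rigidity statement for the pair $\Delta\subseteq\Delta'$: for every $x\in K^\times$ with $x\notin\Delta'$, at least one of $1-x$, $1-x^{-1}$ lies in $\Delta$, together with a finer constraint on the cosets in $\Delta'\setminus\Delta$. This is where $\mu_{2\ell}\subset K$ enters: it guarantees $-1\in\Kl\subseteq\Delta$, so no auxiliary sign-cosets appear and the rigidity takes the clean ``uniform'' form above — for $\ell$ odd this is automatic, and the demand $\mu_4\subset K$ is precisely what makes the $\ell=2$ case behave uniformly.

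With this rigidity in hand I would apply the (purely field-theoretic) $\ell$-rigid calculus of \cite{Koenigsmann1995}, as packaged in \cite{Engler1998}: a subgroup of $K^\times$ of sufficiently large index that is rigid in the above sense is the principal-unit group $1+\mf_v$ of a valuation $v$ of $K$, up to a defect concentrated in $\Oc_v^\times$. This yields $v$ with $1+\mf_v\subseteq\Delta$ and with $\Delta\cdot\Oc_v^\times$ squeezed between $\Delta$ and $\Delta'$; pushing the finer rigidity (the distinction between cosets outside $\Delta'$ and those in $\Delta'\setminus\Delta$) through the construction upgrades this to $\Delta\cdot\Oc_v^\times=\Delta'$ in the case $\Ib(Z)\neq Z$, and to $[\Delta\cdot\Oc_v^\times:\Delta]\leq\ell$ in the case $\Ib(Z)=Z$ — the residual $\Z/\ell$ being the possible Artin--Schreier contribution when $\Char k(v)=\ell$. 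Taking $B$ attached to $\Delta\cdot\Oc_v^\times$ in the first bullet, and invoking $I_v^1\leq I_v$, $D_v^1\leq D_v$ from Proposition \ref{prop:decomp-thy-prop}, proves the $(\Z/\ell)$-statements (and in particular recovers the main result of \cite{Bogomolov2007}). The pro-$\ell$ statements then follow by running this argument at each finite level $\Z/\ell^n$ — here $\mu_{\ell^\infty}\subset K$ supplies Kummer theory at all levels, and the torsion-freeness of $\Pi_K^a/\widehat Z$ ensures the level-$n$ subgroups are saturated, so the valuations produced at successive levels are compatible — and passing to the limit.

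The main obstacle, I expect, is the second half of the rigid-element step: extracting an honest valuation is classical, but controlling the residue contribution sharply enough to get the \emph{equality} $\Ib(Z)=Z\cap I_v^1$ (i.e.\ $\Delta'=\Delta\cdot\Oc_v^\times$), rather than a mere inclusion, will require feeding the full strength of almost-commuting-liftability — not just rigidity of $\Delta$ in isolation — back into the construction, and keeping careful track of which cosets of $\Delta$ are ``rigid'' and which are ``units.'' A secondary difficulty is the passage to the pro-$\ell$ case: one must verify that torsion-freeness of $\Pi_K^a/\widehat Z$ really does force the $\Z/\ell^n$-data to cohere into a single valuation of $K$, and that this valuation has the required minimized inertia/decomposition behaviour at the $\Z_\ell$-level.
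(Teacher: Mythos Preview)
Your overall architecture matches the paper's: dualize via Kummer theory, translate the ACL condition on $Z$ into a rigidity statement for $\Delta=Z^\perp$, and then invoke the rigid-element machinery to produce $v$. The paper formalizes the middle step as the identity $(\Ib(Z))^\perp=\Hb(Z^\perp)$, where $\Hb(T)$ is an explicit Milnor-$K$-theoretic envelope of $T$, and then splits according to whether $\Hb(T)=T$ (i.e.\ $T$ rigid) or not. Two points, however, deserve comment.

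First, the case $\Ib(Z)\neq Z$. You correctly flag the equality $\Ib(Z)=Z\cap I_v^1$ (equivalently $\Delta'=\Delta\cdot\Oc_v^\times$) as the crux, but your plan to ``push the finer rigidity through the construction'' is where the paper does something you have not anticipated. The paper does \emph{not} refine the rigid calculus for the single subgroup $\Delta$; instead it argues by contradiction that $\Hb(T)$ itself is valuative. The key extra ingredient is a comparability lemma: if $H\lneq K^\times$ is \emph{not} valuative, then any two valuations $v_1,v_2$ with $1+\mf_{v_i}\subseteq H$ are comparable. One then takes two independent hyperplanes $G_1,G_2$ between $T$ and $\Hb(T)$, applies the rigid case to each to get valuations, uses the comparability lemma (with $H=\Hb(T)$, assumed non-valuative) to force these valuations to be comparable, and derives that $T$ itself was already rigid --- contradicting $T\neq\Hb(T)$. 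Your proposal is missing this comparability step; without it there is no mechanism that singles out $\Hb(T)$ rather than some larger valuative overgroup.

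Second, the pro-$\ell$ case. Your plan to run the mod-$\ell^n$ argument for each $n$ and pass to the limit is \emph{not} what the paper does, and the compatibility of the valuations across levels is a real issue: the rigid calculus at each level involves a choice of the valuative overgroup $H$ (with $H/T$ cyclic), and there is no a priori reason these choices cohere. The paper instead proves a genuinely $\ell$-adic version of the ``rigid $\Rightarrow$ valuative up to a cyclic defect'' theorem directly for $\widehat T\leq\widehat K$, by a separate (and rather delicate) argument: assuming the cyclic-defect claim fails, one embeds a $2$-dimensional piece of $\widehat K/\widehat T$ into $\mathbb{P}^2(\mathbb{Q}_\ell)$ and shows, via an incidence-geometry induction, that the image would have to contain a copy of $\mathbb{P}^2(\mathbb{Q})$, which is dense --- impossible for a $\Z_\ell$-lattice. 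So the pro-$\ell$ statement is not a formal limit of the mod-$\ell^n$ statements; it needs its own proof of the rigid-to-valuative step.
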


We also give a condition which ensures that also $\Char k(v) \neq \ell$. This is summarized in the following theorem which follows from Theorems \ref{thm:acl-groups-with-char} and \ref{thm:main-acl-with-char}.
\begin{thm}
Let $K$ be a field such that $\Char K \neq \ell$ and $\mu_{2\ell} \subset K$.
Let $Z \leq \Gc_K^a$ be given and consider $\Ib(Z) \leq Z$ as defined above.
Denote by $L = (K')^Z$ and assume that there exists $Z_0 \leq \Gc_L^a$ such that $\Ib(Z_0) \leq Z_0$ map surjectively onto $\Ib(Z) \leq Z$ via the canonical map $\Gc_L^a \rightarrow \Gc_K^a$.
Then the following hold:
\begin{itemize}
\item If $\Ib(Z) = Z$, there exists a valuation $v$ of $K$ with $\Char k(v) \neq \ell$ and a subgroup $B \leq Z$ such that $Z/B$ is cyclic, $Z \leq  D_v$ and $B \leq I_v$.
\item If $\Ib(Z) \neq Z$, there exists a valuation $v$ of $K$ with $\Char k(v) \neq \ell$ such that $Z \leq D_v$ and $\Ib(Z) = Z \cap I_v$.
\end{itemize}

Assume furthermore that $\mu_{\ell^\infty} \subset K$.
Let $\widehat Z \leq \Pi_K^a$ be given such that $\Pi_K^a/\widehat Z$ is torsion-free and consider $\Ib(\widehat Z) \leq \widehat Z$ as defined above.
Denote by $L = (K^{ab})^{\widehat Z}$ and assume that there exists $\widehat Z_0 \leq \Pi_L^a$ such that $\Pi_L^a/\widehat Z_0$ is torsion-free and $\Ib(\widehat Z_0) \leq \widehat Z_0$ map surjectively onto $\Ib(\widehat Z) \leq \widehat Z$ via the canonical map $\Pi_L^a \rightarrow \Pi_K^a$.
Then the following hold:
\begin{itemize}
\item If $\Ib(\widehat Z) =\widehat Z$, there exists a valuation $v$ of $K$ with $\Char k(v) \neq \ell$ and a subgroup $\widehat B \leq \widehat Z$ such that $\widehat Z/\widehat B$ is cyclic, $\widehat Z \leq  \widehat D_v$ and $\widehat B \leq \widehat I_v$.
\item If $\Ib(\widehat Z) \neq \widehat Z$, there exists a valuation $v$ of $K$ with $\Char k(v) \neq \ell$ such that $\widehat Z \leq \widehat D_v$ and $\Ib(\widehat Z) = \widehat Z \cap \widehat I_v$.
\end{itemize}
\end{thm}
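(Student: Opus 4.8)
The plan is to deduce both statements from the previous theorem (Theorems~\ref{thm:acl-groups} and~\ref{thm:main-acl}) by a coarsening argument, in which the hypothesis on $Z_0$ (resp.\ on $\widehat Z_0$) is used precisely to retain control of inertia after coarsening; I will carry out the $(\Z/\ell)$-assertions in detail, the pro-$\ell$ assertions being entirely parallel with $\Z_\ell$ in place of $\Z/\ell$ (and using that $\Pi_L^a/\widehat Z_0$ is torsion-free, so that the previous theorem applies over $L$). First I would apply the previous theorem to $Z \le \Gc_K^a$, obtaining a valuation $v$ of $K$ with $Z \le D_v^1 \le D_v$ together with, according to the case, a subgroup $B \le Z$ with $Z/B$ cyclic and $B \le I_v^1$, or the equality $\Ib(Z) = Z \cap I_v^1$. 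If $\Char k(v) \ne \ell$ then $D_v^1 = D_v$ and $I_v^1 = I_v$ by Proposition~\ref{prop:decomp-thy-prop} and we are done, so the content is the case $\Char k(v) = \ell$, in which $\Char K = 0$, $v(\ell) > 0$, and $v(p) = 0$ for every prime $p \ne \ell$.

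Next I would coarsen $v$: let $\Delta$ be the convex hull of $\langle v(\ell)\rangle$ in $v(K^\times)$ and let $\bar v$ be the corresponding coarsening, so that $\Char k(\bar v) = 0 \ne \ell$ while $\Oc_v \subseteq \Oc_{\bar v}$, $\mf_{\bar v} \subseteq \mf_v$, and $\Oc_v^\times \subseteq \Oc_{\bar v}^\times$. From $1+\mf_{\bar v}\subseteq 1+\mf_v$ and $\Oc_v^\times\subseteq\Oc_{\bar v}^\times$ one gets $D_v^1 \le D_{\bar v}^1$ and $I_{\bar v}^1 \le I_v^1$, and since $\Char k(\bar v) \ne \ell$, Proposition~\ref{prop:decomp-thy-prop} upgrades this to $Z \le D_v^1 \le D_{\bar v} = D_{\bar v}^1$ and $I_{\bar v} = I_{\bar v}^1 \le I_v^1$. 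In particular $Z \le D_{\bar v}$, and in the case $\Ib(Z)\ne Z$ the inclusion $Z \cap I_{\bar v} \le Z\cap I_v^1 = \Ib(Z)$ is immediate; thus it remains only to establish the reverse inclusion $\Ib(Z) \le I_{\bar v}$ (and, when $\Ib(Z)=Z$, the parallel task of producing $B \le I_{\bar v}$ with $Z/B$ cyclic).

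This remaining inclusion is the heart of the matter and is where the hypothesis enters. Write $L = (K')^Z$; then $\Char L \ne \ell$, $\mu_{2\ell}\subseteq L$, and since $Z\le D_{\bar v}$ the decomposition field of $\bar v$ over $K$ lies in $L$. Applying the previous theorem over $L$ to $Z_0 \le \Gc_L^a$ yields a valuation $v_L$ of $L$ with $Z_0 \le D_{v_L}^1$ and $\Ib(Z_0) = Z_0 \cap I_{v_L}^1$ (resp.\ the cyclic-quotient statement). Using that the minimized decomposition and inertia subgroups are compatible with the canonical surjection $\rho\colon\Gc_L^a \twoheadrightarrow Z \le \Gc_K^a$ (so $\rho(D_{v_L}^1)\le D_{v_L|_K}^1$ and $\rho(I_{v_L}^1)\le I_{v_L|_K}^1$), I would arrange that $v_L$ refines $\bar v|_L$, whence $\rho(I_{v_L}^1) \le I_{\bar v}^1$ and therefore
\[ \Ib(Z) = \rho\bigl(\Ib(Z_0)\bigr) = \rho\bigl(Z_0 \cap I_{v_L}^1\bigr) \le Z \cap \rho(I_{v_L}^1) \le Z \cap I_{\bar v}^1 = Z\cap I_{\bar v}. \]
Combined with the previous step this gives $\Ib(Z) = Z \cap I_{\bar v}$ with $\Char k(\bar v)\ne\ell$; the case $\Ib(Z)=Z$ is identical, taking $B$ to be the $\rho$-descent of the corresponding subgroup over $L$ and checking that $Z/B$ stays cyclic. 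The pro-$\ell$ assertions are obtained by the same argument with $\widehat Z$, $\widehat Z_0$, $K^{ab}$, $L=(K^{ab})^{\widehat Z}$, and the hatted decomposition/inertia groups throughout, the torsion-freeness hypotheses guaranteeing applicability of the previous theorem at each step.

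The hard part is the claim that $v_L$ can be chosen to refine $\bar v|_L$ — equivalently, that $K(\sqrt[\ell]{\Oc_{\bar v}^\times})\subseteq L(\sqrt[\ell]{\Oc_{v_L}^\times})$, i.e.\ that every element of $\Ib(Z)$ acts trivially on $\sqrt[\ell]{\Oc_{\bar v}^\times}$ and hence that the ``residually $\ell$-adic'' part of $v$ (the valuation induced by $v$ on $k(\bar v)$, of residue characteristic $\ell$) contributes no new inertia. Since the previous theorem does not pin down $v_L$, one must either re-run the rigid-element construction over $L$ so as to build $v_L$ as a genuine refinement of $\bar v|_L$, or argue directly that the surjectivity of $\Ib(Z_0)\twoheadrightarrow\Ib(Z)$ forces this triviality; it is exactly here that the hypothesis $\mu_{2\ell}\subseteq K$ (resp.\ $\mu_{\ell^\infty}\subseteq K$) gets used, through the structure of $\Delta$ and of the residue valuation on $k(\bar v)$.
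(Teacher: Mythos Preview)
Your approach has a genuine gap at precisely the point you flag: you need $v_L$ to refine $\bar v|_L$, and you have no mechanism to force this. The basic theorem over $L$ hands you \emph{some} valuation $v_L$ witnessing the rigidity of $Z_0^\perp$, but there is no a priori comparability between $v_L$ and $\bar v|_L$; without it, the chain $\rho(I_{v_L}^1)\le I_{\bar v}^1$ breaks. Your suggestion to ``re-run the rigid-element construction over $L$ as a genuine refinement of $\bar v|_L$'' would require a relative version of the entire rigidity-to-valuation machinery, which you have not supplied.

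The paper avoids this compatibility problem by reversing the order of operations and by using one extra ingredient you are missing: Lemma~\ref{lem:pops-lemma}. Instead of producing $v$ over $K$ and then separately producing $v_L$ over $L$, the paper (via Propositions~\ref{prop:rigid-kthy-with-char} and~\ref{prop:main-kthy-with-char}, to which the theorem reduces through Lemma~\ref{lem:acl-rigid-equiv-of-Z}) applies the basic result \emph{only} over $L$ to get $w$ on $L$, then takes $v = w|_K$. This makes comparability automatic. The remaining point is to see that passing from $v$ to $v_H$ (where $H = K^\times\cap H'$) kills the convex hull $\Delta$ of $v(\ell)$. This is exactly Lemma~\ref{lem:pops-lemma}: because $L \supseteq K(\sqrt[\ell]{1+\mf_v})$ (resp.\ $K(\sqrt[\ell^\infty]{1+\mf_v})$), every element of $\Delta$ becomes $\ell$-divisible (resp.\ $\ell^\infty$-divisible) in $\Gamma_w$; combined with the injectivity of $\widehat\Gamma_v/\widehat v(\widehat H)\hookrightarrow\widehat\Gamma_w/\widehat w(\widehat H')$, this forces $\Delta\subseteq v(H)$, hence $v_H(\ell)=0$. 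So the hypothesis on $Z_0$ is used not to compare two separately-produced valuations, but to guarantee that the \emph{single} valuation (built over $L$ and restricted) has the $\ell$-part of its value group already swallowed by $H$.
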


We also give a group-theoretical recipe to detect precisely $I_v^1 \leq D_v^1 \leq \Gc_K^a$ resp. $\widehat I_v^1 \leq \widehat D_v^1 \leq \Pi_K^a$ -- see Theorem \ref{thm:main-ACL-compatibility}.
Moreover, we provide several applications of this theory.
First, we give a group theoretical recipe to detect $I_v \leq D_v \leq \Gc_K^a$ resp. $\widehat I_v \leq \widehat D_v \leq \Pi_K^a$ for valuations $v$ of $K$ with $\Char k(v) \neq \ell$, using a somewhat larger group than $\Gc_K^c$ resp. $\Pi_K^c$, but one that is still much smaller than $\Gc_K$ -- see \S~\ref{sec:detect-inert-groups}.
We also prove a sufficient condition which detects whether or not $\Char K = 0$ using the group-theoretical structure encoded in a characteristic quotient of $\Gc_K$ -- see \S~\ref{sec:maximal-pro-ell} which also includes another application towards the possible structure of $\Gc_K$.

\subsection{A Guide Through the Paper}
\label{sec:guide-thro-manuscr}

The paper has two paths: the {\bf mod $\ell$ case} which deals with the groups $K^\times/\ell$, $\Gc_K^c$, $\Gc_K^a$ and which at some points makes the assumption that $\mu_\ell \subset K$ or $\mu_{2\ell} \subset K$, and the {\bf pro-$\ell$ case} which deals with the groups $\widehat K$ (the $\ell$-adic completion of $K^\times)$, $\Pi_K^c$, $\Pi_K^a$ and which at some points makes the assumption that $\mu_{\ell^\infty} \subset K$.
While the two can be considered completely separately, the arguments are sufficiently similar to merit only a single paper.
The statements of all lemmas, propositions, theorems etc. in this paper have first the mod-$\ell$ case and then the pro-$\ell$ case.
The arguments are usually only given for the pro-$\ell$ case; usually the pro-$\ell$ case is more technical and it is essentially a simple matter of making the appropriate changes in notation to deduce the analogous mod-$\ell$ arguments.
The exception to this is in \S \ref{sec:gener-cohom-pro-ell} where we give most of the arguments in the mod-$\ell$ case. 
Indeed the existence of a possibly non-trivial Bockstein map in the mod-$\ell$ case makes this case more technical; as in the other sections, the corresponding arguments in the pro-$\ell$ case are essentially identical to those in the mod-$\ell$ case.
Also, Theorem \ref{thm:rigid-to-valuative} is explicitly proved in both cases -- see the argument itself for a remark towards a unified proof.

The main body of the paper is split up into two sections.
\S~\ref{sec:rigid-elements} deals mostly with fields $K$ that have no further restrictions.
\begin{itemize}
\item In \S~\ref{sec:rigid-valu-theory} we recall the notion of a rigid subgroup $T \leq K^\times$ and show how such subgroups relate to valuations of $K$.
\item In \S~\ref{sec:rigidity-milnor-k} we explore the relationship between rigidity of subgroups $T \leq K^\times$ and the structure of the Milnor K-ring $K_*^M(K)$.
We also explore analogous relationships in the $\ell$-adically complete case.
In \S~\ref{sec:ACL-groups}, this will eventually allow us to describe rigid subgroups in terms of Galois theory via Kummer theory and Galois cohomology.
\item In \S~\ref{sec:main-results-rigid} we state the main results of the section which allow us to detect valuations $v$ of $K$ using the structure of $K_*^M(K)$ resp. its $\ell$-adically complete analogue.
One should note that these results impose no restrictions on the field $K$.
\item In \S~\ref{sec:restr-char} we describe further conditions that ensure $\Char k(v) \neq \ell$ as soon as $\Char K \neq \ell$.
\item In \S~\ref{sec:compatibility-coars} we explore the compatibility of the results of \S~\ref{sec:main-results-rigid} in taking coarsenings/refinements of $v$.
In this subsection, we also describe explicitly the ``minimal'' valuations which can be detected with this method, and show that divisorial valuations satisfy this minimality condition (see Example \ref{example:div-vals}).
\end{itemize}

\S~\ref{sec:ACL-groups} deals exclusively with fields $K$ such that $\Char K \neq \ell$ and $\mu_\ell \subset K$ resp. $\mu_{\ell^\infty} \subset K$.
We will in some cases make the assumption that further $\mu_{2\ell} \subset K$ which ensures, in particular, that $-1 \in \Kl$; note that this is a vacuous condition if $\ell \neq 2$.
\begin{itemize}
\item In \S~\ref{sec:hilb-decomp-theory} we review some basic facts from Hilbert's decomposition theory of valued fields $(K,v)$ such that $\Char K \neq \ell$, $\Char k(v) \neq \ell$ and $\mu_\ell \subset K$ resp. $\mu_{\ell^\infty} \subset K$ -- this expands on the brief overview from the introduction.
We will be able to explicitly describe the Kummer-duals of the decomposition/inertia subgroups of such valuations $v$ of $K$ inside $\Gc_K^a$ resp. $\Pi_K^a$.
\item In \S~\ref{sec:gener-cohom-pro-ell} we review some basic facts from the cohomology of pro-$\ell$ groups which will be immediately used in \S~\ref{sec:galois-cohomology}.
\item In \S~\ref{sec:galois-cohomology} we recall some facts from the Galois cohomology of fields $K$ such that $\Char K \neq \ell$ and $\mu_\ell \subset K$ resp. $\mu_{\ell^\infty} \subset K$.
This allows us to describe a condition for $A \leq \Gc_K^a$ resp. $\widehat A \leq \Pi_K^a$ which ensures that the Kummer dual of $A$ resp. $\widehat A$ is rigid.
\item In \S~\ref{sec:main-results-acl} we describe the main results mentioned in the introduction.
These are merely a translation of the main results of \S~\ref{sec:rigid-elements}, via Kummer theory, to group theoretical results within $\Gc_K^c$ resp. $\Pi_K^c$ along with the canonical projection $\Gc_K^c \twoheadrightarrow \Gc_K^a$ resp. $\Pi_K^c \rightarrow \Pi_K^a$.
\end{itemize}

As mentioned above, in \S~\ref{sec:applications} we present some applications of the main results of \S~\ref{sec:ACL-groups}.
In \S~\ref{sec:detect-inert-groups}, we apply our results to describe group-theoretical recipes which detect inertia/decomposition groups in $\Gc_K^a$ resp. $\Pi_K^a$ of valuations $v$ of $K$ whose residue characteristic is different from $\ell$.
As a separate application, in \S~\ref{sec:maximal-pro-ell} we prove a result which restricts the group theoretical structure of the pro-$\ell$ group $\Gc_K$ -- see Corollary \ref{cor:application-char}.
This corollary can be seen as a sufficient condition to detect whether a field $K$ has characteristic $0$ using only the group theoretical data encoded in a characteristic quotient of $\Gc_K$ which we denote by $\Gc_K^m$ (see \S~\ref{sec:detect-inert-groups} for the definition of $\Gc_K^m$).
Moreover, in Corollary \ref{cor:pro-ell-comm-pairs}, we describe minimal conditions which describe elements which commute in $\Gc_K$ given they commute in its small quotients.

\subsection*{Acknowledgments}
I would like to thank all who expressed interest in this work and in particular Florian Pop, Jakob Stix, Jochen Koenigsmann, Moshe Jarden, Dan Haran and Lior Bary-Soroker.

\section{Rigid Elements}
\label{sec:rigid-elements}

Throughout we use the term ``valuation'' for a Krull valuation.
If $K$ is a field and $v$ is a valuation of $K$, we denote by $\Oc_v$ the valuation ring, $\mf_v$ the valuation ideal, $\Gamma_v$ the value group and $k(v) = \Oc_v/\mf_v$ the residue field of $v$.

Throughout, we will work with a fixed prime $\ell$.
For any field $K$, we denote by $\widehat K$ the $\ell$-adic completion of $K^\times$; i.e. $\widehat K = \lim_n K^\times/\ell^n$.
In general, for an abelian group $W$ we denote by $\widehat W = \lim_n W/\ell^n$ the $\ell$-adic completion of $W$.
Also, throughout we will also use the notation $\widehat \bullet$ to differentiate the pro-$\ell$ case from the mod-$\ell$ case.
We consider $\widehat K$ as a complete $\Z_\ell$-module and we implicitly only consider submodules which are complete and closed; we generally use the notation $\widehat T \leq \widehat K$ for such a submodule.
Each $\widehat T \leq \widehat K$ is given by a compatible system of subgroups $(T_n \leq K^\times/\ell^n)_n$ such that $T_{n+1} \mod K^{\times \ell^n} = T_n$.

For $\widehat T \leq \widehat K$ given by $(T_n)_n$ as above, we use the notation $K^\times \cap \widehat T \leq K^\times$ to denote $\bigcap_n T_n$ where we consider $T_n$ as a subgroup of $K^\times$ which contains $K^{\times \ell^n}$.
Alternatively, $K^\times \cap \widehat T$ is the inverse image of $\widehat T$ under the canonical $\ell$-adic completion map $K^\times \rightarrow \widehat K$.
For an element $x \in K^\times$ we use the notation $x \in \widehat T$ to mean $x \in K^\times \cap \widehat T$, and similarly for subsets $S \subset K^\times$.

\subsection{Rigidity and Valuation Theory}
\label{sec:rigid-valu-theory}

Let $(K,v)$ be a valued field.
The ultrametric inequality ensures that for all $x \in K^\times$ such that $x \notin \Oc_v^\times$, one has $1+x \in \Oc_v^\times \cup x \cdot \Oc_v^\times$.
The same is true if we replace $\Oc_v^\times$ with any subgroup $H$ such that $\Oc_v^\times \leq H \leq K^\times$ and, as we outline in the next lemma, this condition is almost sufficient for the existence of $v$.

\begin{lem}
\label{lem:valuative}
Let $K$ be a field and let $H \leq K^\times$ be given.
The following are equivalent:
\begin{enumerate}
\item There exists a valuation $v$ of $K$ such that $\Oc_v^\times \leq H$.
\item $-1 \in H$, for all $x \in K^\times \smallsetminus H$ one has $1+x \in H \cup x H$, and whenever $x,y \in K^\times \smallsetminus H$ are such that $1+x,1+y \in H$, one has $1+x(1+y) \in H$.
\end{enumerate}
\end{lem}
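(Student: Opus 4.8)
The plan is to handle $(1)\Rightarrow(2)$ by a direct application of the ultrametric inequality, and $(2)\Rightarrow(1)$ by reconstructing a valuation ring from $H$.

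For $(1)\Rightarrow(2)$: given a valuation $v$ of $K$ with $\Oc_v^\times\leq H$, we have $-1\in\Oc_v^\times\leq H$. If $x\in K^\times\smallsetminus H$ then $x\notin\Oc_v^\times$, so $v(x)\neq 0$; if $v(x)>0$ then $v(1+x)=0$, so $1+x\in\Oc_v^\times\leq H$, and if $v(x)<0$ then $v(1+x)=v(x)$, so $x^{-1}(1+x)\in\Oc_v^\times\leq H$, i.e.\ $1+x\in xH$. For the last clause, note that $x\notin H$ and $1+x\in H$ force $v(x)>0$: the case $v(x)=0$ gives $x\in\Oc_v^\times\leq H$, and the case $v(x)<0$ gives $1+x\in xH$, hence $x\in H$. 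Applying this to $x$ and to $y$ yields $v(x),v(y)>0$, so $v\bigl(x(1+y)\bigr)=v(x)>0$ and $1+x(1+y)\in\Oc_v^\times\leq H$.

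For $(2)\Rightarrow(1)$: we may assume $H\neq K^\times$, since otherwise the trivial valuation works. I would build the valuation from
\[ \mf:=\{0\}\cup\{x\in K^\times:\ x\notin H,\ 1+x\in H\}, \]
the set that should play the role of the $v$-small elements not already lying in $H$. Two elementary consequences of the hypotheses drive the argument. First, the rigidity clause $1+x\in H\cup xH$, after dividing the second alternative by $x$, shows that for every $x\in K^\times\smallsetminus H$ exactly one of $x,x^{-1}$ lies in $\mf$. Second, the third clause of (2) says precisely that $\mf$ is closed under $(x,y)\mapsto x(1+y)$. From these together with $-1\in H$ one gets, for instance, $\mf=-\mf$: if $-m\notin\mf$ for some nonzero $m\in\mf$, then $-m^{-1}\in\mf$ by the dichotomy, so $m(1-m^{-1})=m-1\in\mf$, which is absurd because $1+(m-1)=m\in H$. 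I would then set
\[ \Oc:=\mf\cup\{x\in K:\ x\mf\subseteq\mf\} \]
and prove that $\Oc$ is a valuation ring of $K$ with $\Oc^\times\leq H$, which exhibits the valuation $v$ we want.

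The inclusion $\Oc^\times\leq H$ is the easy half: if $x\in\Oc^\times$ but $x\notin H$, then by the dichotomy one of $x,x^{-1}$ — say $x$ after relabelling — lies in $\mf$, while $x^{-1}\notin\mf$; since $x^{-1}\in\Oc$ we then get $x^{-1}\mf\subseteq\mf$, so $1=x^{-1}x\in\mf$, contradicting $1\in H$. Likewise, for $x\notin H$ the dichotomy immediately places one of $x,x^{-1}$ in $\mf\subseteq\Oc$. The substance of the proof — and the step I expect to be the main obstacle — is that $\Oc$ is a subring and that $x\in\Oc$ or $x^{-1}\in\Oc$ holds also for $x\in H$; concretely, closure of $\Oc$ under addition. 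This is exactly where the third clause of (2) is indispensable, since the first two clauses only constrain elements outside $H$: one must rewrite sums multiplicatively, e.g.\ $m+m'=m\bigl(1+m'/m\bigr)$, and then run a case analysis according to whether the quotients that appear fall into $H$, into $\mf$, or into neither, using the closure of $\mf$ under $(x,y)\mapsto x(1+y)$ to push the resulting products back into $\mf$. I expect this bookkeeping, rather than any isolated conceptual difficulty, to be the technical core of the argument.
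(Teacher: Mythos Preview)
Your $(1)\Rightarrow(2)$ is complete and matches the paper's argument, with the added care of spelling out why $x\notin H$ and $1+x\in H$ force $v(x)>0$.

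For $(2)\Rightarrow(1)$ you take a genuinely different route from the paper: the paper simply invokes \cite{Arason1987} Theorem~2.10, observing that the third clause of (2) is exactly the ``preadditive'' hypothesis required there. Your sketch is essentially a reconstruction of what Arason--Elman--Jacob do to build the valuation ring $\Oc(H,H)$ (cf.\ Remark~\ref{rem:vsubH-vs-arasons-valuation} in the paper). The ingredients you isolate --- the dichotomy that exactly one of $x,x^{-1}$ lies in $\mf$ for $x\notin H$, the closure of $\mf$ under $(x,y)\mapsto x(1+y)$, and the symmetry $\mf=-\mf$ --- are precisely the ones that drive their construction, and your diagnosis that closure of $\Oc$ under addition is the crux is accurate. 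What your approach buys is self-containment; what the citation buys is brevity and the assurance that the case analysis (which is genuinely fiddly, and where your particular choice $\Oc=\mf\cup\{x:x\mf\subseteq\mf\}$ may need minor adjustment to make multiplicative closure transparent) has been carried out carefully. There is no conceptual gap in your plan.
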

\begin{proof}
First assume that there exists a valuation $v$ such that $\Oc_v^\times \leq H$.
Let $x \in K^\times \smallsetminus H$ be given.
Then, in particular, $v(x) \neq 0$ and thus $1+x \in \Oc_v^\times$ iff $v(x) > 0$; also, $1+x \in x \cdot \Oc_v^\times$ iff $v(x) < 0$. 
Thus $1+x \in H \cup x \cdot H$ for all such $x$.
Moreover, if $x,y \notin H$ and $1+x,1+y \in H$ one has $v(x),v(y) > 0$ and thus $v(x\cdot(1+y)) > 0$ so that $1+x(1+y) \in H$ as required.

The converse is \cite{Arason1987} Theorem 2.10 taking $T=H$ in loc.cit..
Indeed, we note that the our assumption ensures the ``preadditive'' condition of loc.cit..
\end{proof}

\begin{defn}
\label{defn:valuative}
Let $K$ be a field and $H \leq K^\times$.
We will say that $H$ is {\bf valuative} if it satisfies the equivalent statements of Lemma \ref{lem:valuative}.
Similarly, if $\widehat H \leq \widehat K$, we say that $\widehat H$ is valuative provided that $K^\times \cap \widehat H$ is valuative.
\end{defn}

\begin{remark}
\label{rem:flag-funcs}
Let $f : K^\times \rightarrow \Z/\ell$ be a surjective homomorphism.
Then the kernel of $f$ is valuative if and only if $f$ is a {\bf flag function} in the sense of Bogomolov-Tschinkel \cite{Bogomolov2007}.
Similarly, the kernel of a surjective homomorphism $\widehat f : \widehat K \rightarrow \Z_\ell$ is valuative if and only if $f$ is a flag function in the sense of loc.cit.. 
\end{remark}

A subgroup $H \leq K^\times$ is valuative if and only if there exists some valuation $v$ such that $K^\times \twoheadrightarrow K^\times/H$ factors through $v : K^\times \rightarrow \Gamma_v$.
Similarly, $\widehat H \leq \widehat K$ is valuative if and only if the canonical map $\widehat K \twoheadrightarrow \widehat K / \widehat H$ factors through $\widehat v : \widehat K \rightarrow \widehat \Gamma_v$ for some valuation $v$.
Recall that any $\widehat H \leq \widehat K$ is given by a compatible system of $H_n \leq K^\times/\ell^n$ such that $H_{n+1} \mod \ell^n = H_n$.
If we consider each $H_n$ as a subgroup of $K^\times$ which contains $K^{\times \ell^n}$ we note that $\widehat K \rightarrow \widehat K / \widehat H$ factors through $\widehat v$ if and only if $\Oc_v^\times \leq H_n$ for all $n$ and thus $\Oc_v^\times \leq \bigcap_n H_n = K^\times \cap \widehat H$.

To each valuative subgroup $H \leq K^\times$, we will associate a canonical valuation $v_H$ as follows.
This valuation is obtained by first picking any valuation $w$ of $K$ such that $\Oc_w^\times \leq H$ and then taking the coarsest coarsening of $w$, say $w_H$, such that $\Oc_{w_H}^\times \leq H$.
As we show in the next lemma, $w_H$ does not depend on the original choice of $w$ and we denote this unique valuation by $v_H$.
For a valuative $\widehat H \leq \widehat K$, we denote by $v_{\widehat H}$ the valuation $v_H$ for $H = K^\times \cap \widehat H$.

In the particular case where $\Kl \leq H \leq K^\times$, $\Char K \neq \ell$ and $\mu_\ell \subset K$, the valuation $v_H$ is precisely the usual $K(\sqrt[\ell]{H})$-core of $w$ where $\Oc_w^\times \leq H$.
Also in the case where $\mu_{\ell^\infty} \subset K$ and $\widehat H \leq \widehat K$ is valuative, $v_{\widehat H}$ is the usual $K\left(\sqrt[\ell^\infty]{K^\times \cap \widehat H}\right)$-core of $w$.
See, for instance, \cite{Pop2010} and/or \cite{Pop2011a} which follow classical results of F. K. Schmidt.

\begin{lem}
\label{lem:v_H}
Let $K$ be a field and let $H$ be a valuative subgroup of $K^\times$.
Then there exists a unique coarsest valuation $v_H$ such that $\Oc_{v_H}^\times \leq H$.
If $w$ is a valuation of $K$ such that $\Oc_w^\times \leq H$, then $v_H$ is a coarsening of $w$; moreover $w = v_H$ if and only if $w(H)$ contains no non-trivial convex subgroups.
\end{lem}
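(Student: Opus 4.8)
The plan is to build $v_H$ as a suitable coarsening of one fixed valuation and then show it is automatically a coarsening of all the others. Since $H$ is valuative, Lemma~\ref{lem:valuative} provides a valuation $w$ of $K$ with $\Oc_w^\times \leq H$; fix one. Recall that the coarsenings of $w$ correspond, order-preservingly, to the convex subgroups $\Delta$ of $\Gamma_w$, the coarsening $w_\Delta$ having value group $\Gamma_w/\Delta$ and unit group $\Oc_{w_\Delta}^\times = \{x \in K^\times : w(x) \in \Delta\}$. The first thing I would record is the equivalence $\Oc_{w_\Delta}^\times \leq H \iff \Delta \subseteq w(H)$, for convex subgroups $\Delta \leq \Gamma_w$: the forward direction is immediate, and for the converse, given $x$ with $w(x) \in \Delta \subseteq w(H)$ one picks $h \in H$ with $w(h) = w(x)$, so $x/h \in \Oc_w^\times \leq H$ and hence $x \in H$. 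As the convex subgroups of $\Gamma_w$ form a chain and the union of those contained in $w(H)$ is again such a subgroup, there is a largest convex $\Delta_H \leq \Gamma_w$ with $\Delta_H \subseteq w(H)$; put $v_H := w_{\Delta_H}$. Then $\Oc_{v_H}^\times \leq H$, and among the coarsenings of $w$ the valuation $v_H$ is the coarsest with this property, since $w_\Delta$ satisfies $\Oc_{w_\Delta}^\times \leq H$ exactly when $\Delta \subseteq \Delta_H$, i.e.\ exactly when $v_H$ is a coarsening of $w_\Delta$.

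The main step, and the place I expect the real work, is to show that $v_H$ is a coarsening of \emph{every} valuation $w'$ of $K$ with $\Oc_{w'}^\times \leq H$. Given such a $w'$, let $r$ be the valuation with $\Oc_r = \Oc_w\cdot\Oc_{w'}$ (an overring of a valuation ring is again a valuation ring), so $r$ is the finest common coarsening of $w$ and $w'$. Writing $w = \bar w \circ r$ and $w' = \bar w' \circ r$ as composite valuations, with $\bar w, \bar w'$ valuations of the residue field $k(r)$, these must be \emph{independent} on $k(r)$: a common nontrivial coarsening of $\bar w$ and $\bar w'$ would compose with $r$ to give a common coarsening of $w$ and $w'$ strictly finer than $r$. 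Here comes the one genuinely external input, the approximation theorem for independent valuations: for any $\bar x \in k(r)^\times$ there is $\bar y \in k(r)^\times$ with $\bar w(\bar y) = \bar w(\bar x)$ and $\bar w'(\bar y) = 0$, whence $\bar x/\bar y \in \Oc_{\bar w}^\times$, $\bar y \in \Oc_{\bar w'}^\times$, and thus $k(r)^\times = \Oc_{\bar w}^\times\cdot\Oc_{\bar w'}^\times$. Since $\Oc_w^\times$ is the preimage of $\Oc_{\bar w}^\times$ under $\Oc_r^\times \twoheadrightarrow k(r)^\times$ (and similarly for $w'$), and $1+\mf_r \subseteq 1+\mf_w \subseteq \Oc_w^\times \leq H$, every $x \in \Oc_r^\times$ can be written as $x = a b u$ with $a \in \Oc_w^\times$, $b \in \Oc_{w'}^\times$ and $u \in 1+\mf_r$, all of which lie in $H$; hence $\Oc_r^\times \leq H$. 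Thus $r$ is a coarsening of $w$ with $\Oc_r^\times \leq H$, so $v_H$ is a coarsening of $r$ by the previous paragraph, and $r$ is a coarsening of $w'$; hence $v_H$ is a coarsening of $w'$.

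Combining these facts, $v_H$ lies in the family of valuations with unit group contained in $H$ and is a coarsening of every member, so it is the unique coarsest such valuation, and in particular is independent of the initial choice of $w$. For the last assertion, if $w$ is any valuation with $\Oc_w^\times \leq H$ then by construction $v_H$ is the coarsening $w_{\Delta_H}$ of $w$, so $w = v_H$ if and only if $\Delta_H = \{0\}$, i.e.\ if and only if $w(H)$ contains no nontrivial convex subgroup of $\Gamma_w$. Apart from the approximation-theoretic input in the second paragraph — which is precisely the classical ingredient behind the results of F.~K. Schmidt referred to above — the argument is bookkeeping with the dictionary between coarsenings and convex subgroups, and the claim $\Oc_r^\times \leq H$ is the only nontrivial point.
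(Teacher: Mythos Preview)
Your proof is correct, but it follows a genuinely different route from the paper's. Both arguments begin identically: pick some $w$ with $\Oc_w^\times \leq H$ and pass to the coarsening $v$ corresponding to the maximal convex subgroup of $\Gamma_w$ contained in $w(H)$. The divergence is in establishing that this $v$ does not depend on $w$. You argue valuation-theoretically: given another $w'$, form the finest common coarsening $r$, observe that the induced valuations $\bar w,\bar w'$ on $k(r)$ are independent, and invoke the approximation theorem to get $k(r)^\times=\Oc_{\bar w}^\times\cdot\Oc_{\bar w'}^\times$, whence $\Oc_r^\times\leq H$ and so $v_H$ is a coarsening of $r$ and hence of $w'$. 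The paper instead gives an \emph{intrinsic} description of $\Oc_v^\times$ purely in terms of $H$: using that $v(H)$ contains no nontrivial convex subgroups, it shows that $\Oc_v^\times$ is exactly the set of $h\in H$ such that $h+x\equiv 1+x\pmod H$ for every $x\notin H$. Since this description makes no reference to $w$, uniqueness follows at once. Your approach is perhaps more conceptually transparent from a valuation-theoretic standpoint and fits the remark the paper makes about F.~K.~Schmidt; the paper's approach is more self-contained (no approximation theorem) and, as a bonus, yields the explicit formula for $\Oc_{v_H}^\times$ that is referenced in Remark~\ref{rem:vsubH-vs-arasons-valuation} as the ring $\Oc(H,H)$ of \cite{Arason1987}.
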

\begin{proof}
Let $w$ be any valuation such that $\Oc_w^\times \leq H$ and consider the coarsening $v$ of $w$ which corresponds to the quotient of $\Gamma_w$ by the maximal convex subgroup of $w(H)$.
This is the coarsest coarsening $v$ of $w$ such that $\Oc_v^\times \leq H$.
By construction, $v(H)$ contains no non-trivial convex subgroups.
We deduce that whenever $x,y \in K^\times$ such that $v(x) = v(y) \mod v(H)$ but $v(x) < v(y)$, there exists a $z \in K^\times$ such that $v(x),v(y) \neq v(z) \mod v(H)$ and $v(x) < v(z) < v(y)$.

Now suppose $h \in H$ and $x \notin H$.
Then $v(h) \neq v(x)$; moreover $v(h) < v(x)$ iff $h+x \in H$ and $v(h) > v(x)$ iff $h+x \in x \cdot H$.
An element $h \in H$ such that $1+x = h+x \mod H$ for all $x \in K^\times \smallsetminus H$ must be in $\Oc_v^\times$ by the discussion above.
We deduce that $\Oc_v^\times$ depends only on $H$ and $K$, but not at all on the original choice of $w$.
Indeed, $\Oc_v^\times$ is precisely the set of all $h \in H$ such that for all $x \in K^\times \smallsetminus H$ one has $1+x = h+x \mod H$.
\end{proof}

\begin{remark}
\label{rem:vsubH-vs-arasons-valuation}
Suppose $H$ is valuative and denote by $v = v_H$.
Then $\Oc_v$ is precisely the valuation ring $\Oc(H,H)$ constructed in \cite{Arason1987}.
\end{remark}

\begin{defn}
\label{defn:v_H}
Given a valuative subgroup $H \leq K^\times$, we denote by $v_H$ the canonical valuation associated to $H$ as per Lemma \ref{lem:v_H}.
Namely, $v_H$ is the unique coarsest valuation of $K$ such that $\Oc_{v_H}^\times \leq H$. 
For $\widehat H \leq \widehat K$, we denote by $v_{\widehat H}$ the canonical valuation associated to $K^\times \cap \widehat H$ as above.
Namely, $v_H$ is the unique coarsest valuation such that $K^\times \rightarrow K^\times/H$ factors through $v_H$ and $v_{\widehat H}$ is the coarsest valuation such that $\widehat K \rightarrow \widehat K/\widehat H$ factors through $\widehat v_{\widehat H}$.
If furthermore $K^{\times \ell} \leq H$, then $\Gamma_{v_H}$ contains no non-trivial $\ell$-divisible convex subgroups.
Similarly, $\Gamma_{v_{\widehat H}}$ contains no non-trivial $\ell$-divisible convex subgroups.
\end{defn}

Our last technical lemma involving valuative subgroups gives a condition which ensures two valuations are comparable.
%This lemma resembles a technical step in the proof of \cite{Pop2010} Proposition 2.5(1).
%However, we note that the lemma below imposes no restrictions on the characteristic of $K$ or the residue fields of the valuations, nor on the subgroup $H \lneq K^\times$.
We note that the lemma below imposes no restrictions on the characteristic of $K$ or the residue fields of the valuations, nor on the subgroup $H \lneq K^\times$.

\begin{lem}
\label{lem:non-valuative-comparable}
Let $K$ be a field.
Suppose that $H \lneq K^\times$ is a {\bf proper} subgroup such that $H$ is {\bf not valuative}.
Let $v_1,v_2$ be two valuations such that $1+\mf_{v_1},1+\mf_{v_2} \leq H$.
Then $v_1$ and $v_2$ are comparable.

In particular, suppose $\widehat H \lneq \widehat K$ is proper and non-valuative.
Suppose $v_1,v_2$ are two valuations such that $1+\mf_{v_1}, 1+\mf_{v_2} \leq \widehat H$.
Then $v_1$ and $v_2$ are comparable.
\end{lem}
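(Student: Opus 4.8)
The plan is to establish the first statement by contraposition and then deduce the pro-$\ell$ version from it. So assume $v_1$ and $v_2$ are \emph{incomparable}; I will show $\langle 1+\mf_{v_1},\, 1+\mf_{v_2}\rangle = K^\times$, which contradicts the properness of $H$ since $1+\mf_{v_1},\,1+\mf_{v_2}\le H \lneq K^\times$. (Incomparability forces both $v_i$ to be non-trivial, since the trivial valuation is comparable to every valuation.)

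The one substantial tool is the classical Approximation Theorem for pairwise incomparable valuations: given incomparable $v_1,v_2$ on $K$, elements $a_1,a_2\in K$, and $\gamma_i\in\Gamma_{v_i}$, there is $x\in K$ with $v_i(x-a_i) > \gamma_i$ for $i=1,2$. Fix any $x\in K^\times$ and apply this with $a_1=1$, $\gamma_1 = 0$, $a_2 = x$, $\gamma_2 = v_2(x)$, producing $a\in K$ with $v_1(a-1) > 0$ and $v_2(a-x) > v_2(x)$. The first inequality says $a\in 1+\mf_{v_1}$ (in particular $a\in K^\times$); the second gives $v_2(a) = v_2(x)$, whence $v_2(x/a - 1) = v_2(x-a) - v_2(x) > 0$, i.e.\ $x/a\in 1+\mf_{v_2}$. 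Therefore $x = a\cdot(x/a)\in\langle 1+\mf_{v_1},\,1+\mf_{v_2}\rangle$. As $x$ was arbitrary, $\langle 1+\mf_{v_1},\,1+\mf_{v_2}\rangle = K^\times$, the contradiction sought; hence $v_1$ and $v_2$ are comparable.

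For the pro-$\ell$ statement I would simply invoke the first part: by definition $\widehat H$ non-valuative means $H := K^\times\cap\widehat H$ is non-valuative, and $H$ is automatically a \emph{proper} subgroup of $K^\times$ because $K^\times$ is valuative (via the trivial valuation) and so cannot be non-valuative; finally the hypothesis $1+\mf_{v_i}\le\widehat H$ unwinds, via the convention $S\subseteq\widehat T\iff S\subseteq K^\times\cap\widehat T$, to $1+\mf_{v_i}\le H$. Applying the first part to this $H$ yields that $v_1$ and $v_2$ are comparable.

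The point to be careful about is the precise form of the Approximation Theorem used: one needs the version valid for valuations that are merely pairwise \emph{incomparable} (they may well share a non-trivial common coarsening), not just the weak approximation available for \emph{independent} valuations. If one wishes to use only weak approximation, the argument can instead be run on the residue field $k(v_0)$ of the finest common coarsening $v_0$ of $v_1,v_2$, where the induced valuations are independent: there every element is a product of an element of $1+\mf_{\bar v_1}$ and one of $1+\mf_{\bar v_2}$; lifting along $\Oc_{v_i}\twoheadrightarrow\Oc_{\bar v_i}$ and using $1+\mf_{v_0}\subseteq(1+\mf_{v_1})\cap(1+\mf_{v_2})$ gives $\Oc_{v_0}^\times\subseteq\langle 1+\mf_{v_1},\,1+\mf_{v_2}\rangle$, so that the latter subgroup is valuative and any $H$ containing it is valuative too — again contradicting the hypothesis.
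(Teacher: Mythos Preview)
Your main argument has a real gap: there is no approximation theorem for merely \emph{incomparable} valuations in the form you invoke. Concretely, if $v_1,v_2$ are incomparable but \emph{dependent}, with non-trivial finest common coarsening $v_0$, then $1+\mf_{v_i}\subset\Oc_{v_i}^\times\subset\Oc_{v_0}^\times$, so $(1+\mf_{v_1})(1+\mf_{v_2})\subset\Oc_{v_0}^\times\neq K^\times$. Thus your claimed equality $\langle 1+\mf_{v_1},1+\mf_{v_2}\rangle=K^\times$ is simply false in this case, and no ``version for incomparable valuations'' will rescue it: the obstruction is intrinsic, not a matter of citing a sharper theorem.

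Your alternative in the final paragraph, however, is correct and is essentially the paper's own argument. The paper first uses approximation for \emph{independent} valuations to conclude that $v_1,v_2$ must be dependent (otherwise $(1+\mf_{v_1})(1+\mf_{v_2})=K^\times$), then takes $v$ with $\Oc_v=\Oc_{v_1}\cdot\Oc_{v_2}$, passes to $k(v)$, observes that the induced valuations $w_i$ would be independent if both non-trivial, and uses $\Oc_v^\times\not\subset H$ (else $H$ valuative) to force one $w_i$ to be trivial. Your alternative packages the same idea a bit more uniformly by letting $v_0$ be the finest common coarsening from the start (trivial or not) and concluding $\Oc_{v_0}^\times\subset H$, which directly contradicts non-valuativity. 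So: discard the first argument, keep the second, and you have exactly the paper's proof.
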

\begin{proof}
For simplicity, we denote $(\Oc_{v_i},\mf_{v_i}) = (\Oc_i,\mf_i)$.
As $(1+\mf_1) \cdot (1+\mf_2) \subset H \neq K^\times$, the two valuations $v_1$ and $v_2$ are dependent by the approximation theorem.
Consider the non-trivial valuation $v$ such that $\Oc_v = \Oc_{v_1} \cdot \Oc_{v_2}$ and denote by $w_i = v_i/v$ the corresponding valuations on $k(v)$.
As $v$ is a coarsening of $v_i$, we also have $1+\mf_v \subset H$.
Moreover, if both $w_1$ and $w_2$ are non-trivial, then they must be independent.
Denote by $H_v$ the image in $k(v)^\times$ of $H \cap \Oc_v^\times$.
Observe that $(\Oc_v^\times \cdot H) / H$ is canonically isomorphic to $k(v)^\times/H_v$, and since $\Oc_v^\times \not\subset H$, we deduce that $H_v \neq k(v)^\times$.
However, $1+\mf_{v_i} \subset H$ implies that $1+\mf_{w_i} \subset H_v$ and thus $(1+\mf_{w_1}) \cdot (1+\mf_{w_2}) \subset H_v$.
In particular, either $w_1$ or $w_2$ must be trivial for otherwise they would be independent.
Thus $v_1$ and $v_2$ are comparable.
\end{proof}

We are now ready to introduce our notion of a {\bf rigid} subgroup $T \leq K^\times$ resp. $\widehat T \leq \widehat K$; our terminology is motivated by Koenigsmann \cite{Koenigsmann1995}.
As we will see in Theorem \ref{thm:rigid-to-valuative}, rigid subgroups are intimately tied to valuative subgroups and vice-versa.
In the following subsection, we explore how rigid subgroups behave in Milnor K-theory.
Moreover, in the next section we translate our results for rigid subgroups, using Kummer theory, to subgroups of the maximal $(\Z/\ell)$-abelian Galois group of fields $K$ with $\Char K \neq \ell$ and $\mu_{2\ell} \subset K$ resp. subgroups of the maximal pro-$\ell$ abelian Galois group of fields $K$ with $\Char K \neq \ell$ and $\mu_{\ell^\infty} \subset K$.
However, for the remainder of this section $K$ will denote an arbitrary field unless otherwise noted.

\begin{defn}
\label{defn:rigid}
Let $K$ be a field and $T \leq K^\times$ such that $ \Kl\leq T$.
We say that $T$ is {\bf rigid} if for all $x \in K^\times \smallsetminus T$ one has
\[ \langle x,1-x \rangle \mod T \ \text{ is cyclic in } K^\times/T. \] 
If $\widehat T \leq \widehat K$, we say that $\widehat T$ is rigid provided that for all $x \in K^\times$ such that $x \notin \widehat T$ one has 
\[ \langle x, 1-x \rangle \mod \widehat T \ \text{ is } \ \Z_\ell\text{-cyclic in } \widehat K/\widehat T. \]
\end{defn}

The following lemma outlines some basic properties of rigid and valuative subgroups.

\begin{lem}
\label{lem:rigid-valuative-properties}
Let $K$ be a field.
Then the following hold.
\begin{enumerate}
\item If $\Kl \leq T \leq K^\times$ is rigid (resp. valuative) and $T \leq H \leq K^\times$ then $H$ is rigid (resp. valuative).
\item If $\Kl \leq T_i \leq K^\times$ are given such that $T_i$ is rigid and $T_{ij} := T_i \cap T_j$ is rigid for all $i,j$, then $T = \bigcap_i T_i$ is rigid.
\item Suppose $\Kl \leq T \leq K^\times$ is rigid and that for each $i$: $H_i$ is valuative and $T \leq H_i \leq K^\times$. Then $H = \bigcap_i H_i$ is valuative.
\end{enumerate}

Similarly:
\begin{enumerate}
\item If $\widehat T \leq \widehat K$ is rigid (resp. valuative) and $\widehat T \leq \widehat H \leq \widehat K$ then $\widehat H$ is rigid (resp. valuative).
\item If $\widehat T_i \leq \widehat K$ are given such that $\widehat K/\widehat T_i$ is torsion-free, $\widehat T_i$ is rigid and $\widehat T_{ij} := \widehat T_i \cap \widehat T_j$ is rigid for all $i,j$, then $\widehat T = \bigcap_i \widehat T_i$ is rigid.
\item Suppose $\widehat T \leq \widehat K$ is rigid and that for each $i$: $\widehat H_i$ is valuative, $\widehat T \leq \widehat H_i \leq \widehat K$ and $\widehat K / \widehat H_i$ is torsion-free. Then $\widehat H = \bigcap_i \widehat H_i$ is valuative.
\end{enumerate}
\end{lem}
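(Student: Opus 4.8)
The plan is to treat the three assertions in turn, giving the argument in the mod-$\ell$ case and indicating the routine changes needed for the pro-$\ell$ case.

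\textbf{Part (1)} is formal. If $x \in K^\times \smallsetminus H$ then $x \notin T$ since $T \le H$, so $\langle x, 1-x\rangle \bmod T$ is cyclic; its image under the surjection $K^\times/T \twoheadrightarrow K^\times/H$ is $\langle x, 1-x\rangle \bmod H$, hence cyclic, so $H$ is rigid. If instead $T$ is valuative, Lemma~\ref{lem:valuative} furnishes a valuation $v$ with $\Oc_v^\times \le T \le H$, so $H$ is valuative. In the pro-$\ell$ case one uses that a continuous surjection of $\Z_\ell$-modules carries $\Z_\ell$-cyclic submodules to $\Z_\ell$-cyclic submodules, and passes to $K^\times \cap (-)$ for the valuative half.

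\textbf{Part (2).} The key point is that $\Kl \le T$ makes $K^\times/T$ an $\F_\ell$-vector space, so for $x \notin T$ the group $\langle x, 1-x\rangle \bmod T$ is cyclic if and only if $1-x \in x^j T$ for some $j \in \{0,1,\dots,\ell-1\}$ (with $x^0 := 1$); thus rigidity of $T$ merely asserts that each such $x$ admits such a $j$. Now fix $x \in K^\times \smallsetminus T$ and, since $T = \bigcap_i T_i$, an index $i_0$ with $x \notin T_{i_0}$; rigidity of $T_{i_0}$ gives $j$ with $1-x \in x^j T_{i_0}$. For an arbitrary index $i$, the subgroup $T_{i_0 i} = T_{i_0} \cap T_i$ is rigid by hypothesis and does not contain $x$, so $1-x \in x^{k} T_{i_0 i}$ for some $k$; since $1-x$ then lies in both $x^j T_{i_0}$ and $x^k T_{i_0}$ while $x \notin T_{i_0}$, we get $k \equiv j \pmod \ell$, whence $1-x \in x^j T_{i_0 i} \subseteq x^j T_i$. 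As $i$ was arbitrary, $(1-x)x^{-j} \in \bigcap_i T_i = T$, so $\langle x, 1-x\rangle \bmod T$ is cyclic, generated by the class of $x$, and $T$ is rigid. For the pro-$\ell$ version one first notes that, $\widehat T = \bigcap_i \widehat T_i$ and $\widehat T_{ij} = \widehat T_i \cap \widehat T_j$ embedding $\widehat K/\widehat T$ and $\widehat K/\widehat T_{ij}$ into products of the torsion-free modules $\widehat K/\widehat T_i$, all these quotients are torsion-free; a cyclic $\Z_\ell$-submodule of a torsion-free module is free of rank $\le 1$, so the same bookkeeping works with $j$ replaced by an exponent $\lambda \in \Z_\ell$ read off inside $\langle x, 1-x\rangle \bmod \widehat T_{i_0} \cong \Z_\ell$, the torsion-freeness of $\widehat K/\widehat T_{i_0}$ playing the role of the statement ``$x \notin T_{i_0}$'' in the comparison of exponents.

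\textbf{Part (3).} First dispose of the trivial cases: if $T = K^\times$ it is valuative (trivial valuation), and if $T$ is valuative then $\Oc_{v_T}^\times \le T \le H_i$ for every $i$ gives $\Oc_{v_T}^\times \le \bigcap_i H_i$, so $H := \bigcap_i H_i$ is valuative by Lemma~\ref{lem:valuative}. So assume $T \lneq K^\times$ is rigid but not valuative. I would then reduce everything to a statement about the canonical valuations $v_{H_i}$ of Lemma~\ref{lem:v_H}: \emph{if the $v_{H_i}$ are pairwise comparable}, then the valuation rings $\Oc_{v_{H_i}}$ form a chain, their intersection $\Oc := \bigcap_i \Oc_{v_{H_i}}$ is again a valuation ring of $K$, and $\Oc^\times = \bigcap_i \Oc_{v_{H_i}}^\times \le \bigcap_i H_i = H$, so $H$ is valuative. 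The main obstacle is the remaining claim that $v_{H_i}$ and $v_{H_j}$ are always comparable, and this is precisely where rigidity of $T$ — not just valuativity of the $H$'s — must enter. I expect to argue by contradiction: reducing, as in the proof of Lemma~\ref{lem:non-valuative-comparable}, by passing to the residue field of the finest common coarsening $w$ of $v_{H_i}$ and $v_{H_j}$ (along which $1+\mf_w$ still lies in $H_i \cap H_j$), to the case where $v_{H_i}$ and $v_{H_j}$ are independent, and then using the approximation theorem to produce $x \in K^\times \smallsetminus T$ whose values at $v_{H_i}$ and $v_{H_j}$ are prescribed — possible since, by Lemma~\ref{lem:v_H}, $v_{H_i}(H_i)$ and $v_{H_j}(H_j)$ contain no nontrivial convex subgroups — in such a way that the classes of $x$ and $1-x$ are independent in $K^\times/T$, contradicting rigidity of $T$. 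The pro-$\ell$ assertion follows by the same scheme, carried out with $K^\times \cap \widehat H_i$ and using the torsion-freeness hypotheses to keep the ambient quotients torsion-free so that the relevant canonical valuations behave exactly as in the mod-$\ell$ case.
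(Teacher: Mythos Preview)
Your arguments for parts (1) and (2) are correct and are precisely what the paper leaves to the reader; your bookkeeping for (2) in the pro-$\ell$ case can be made rigorous by noting that the relation module $\{(a,b)\in\Z_\ell^2 : x^a(1-x)^b\in\widehat T_{i_0 i}\}$ is a rank-one \emph{direct summand} of $\Z_\ell^2$ (because the quotient embeds into the torsion-free $\widehat K/\widehat T_{i_0 i}$), hence is generated by a primitive vector, which then must agree up to a unit with the corresponding generator for $\widehat T_{i_0}$.

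For part (3) you take a genuinely different route from the paper's main argument. The paper verifies the criterion of Lemma~\ref{lem:valuative} for $H=\bigcap_i H_i$ directly: after replacing $T$ by $H$ (still rigid, by (1)), rigidity gives $1+x\equiv x^c\bmod H$ for some $c$, and valuativity of a single $H_i$ with $x\notin H_i$ forces $c\in\{0,1\}$; the preadditive clause is then handled by a short case split. Your approach --- show the $v_{H_i}$ are pairwise comparable, then intersect valuation rings --- is closer to the alternative the paper mentions (Theorem~\ref{thm:rigid-to-valuative} combined with Lemma~\ref{lem:non-valuative-comparable}), and it can be made to work, but your sketch has a genuine gap. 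You propose to ``reduce to the case where $v_{H_i}$ and $v_{H_j}$ are independent'' by passing to the residue field of the finest common coarsening $w$. The problem is that rigidity of $T$ does not descend to $k(w)$: you know $1+\mf_w\subset H_i\cap H_j$, but there is no reason for $1+\mf_w$ (let alone $\Oc_w^\times$) to lie in $T$, so the pushed-down $T$ need not be rigid and you cannot run the contradiction there.

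The repair is to approximate in $k(w)$ but argue in $K$. Using that $v_i/w$ and $v_j/w$ are independent on $k(w)$ and that $v_i(H_i)$ contains no nontrivial convex subgroup (so $\Delta_i:=\ker(\Gamma_{v_i}\to\Gamma_w)\not\subset v_i(H_i)$), choose $\bar x\in k(w)^\times$ with $(v_i/w)(\bar x)>0$ outside $v_i(H_i)$ and $(v_j/w)(\bar x)<0$ outside $v_j(H_j)$, and lift to $x\in\Oc_w^\times$. Then $1-x\in\Oc_{v_i}^\times\subset H_i$ while $(1-x)/x\in\Oc_{v_j}^\times\subset H_j$, and $x\notin H_i$, $x\notin H_j$; hence $x$ and $1-x$ are $\Z/\ell$-independent already in $K^\times/(H_i\cap H_j)$, a fortiori in $K^\times/T$, contradicting rigidity of $T$. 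With this fix your strategy goes through, though it uses more external input (approximation, the structure of $v_H$) than the paper's self-contained verification.
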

\begin{proof}
Below we show the pro-$\ell$ situation, as the mod-$\ell$ case is similar (and simpler).
(1) and (2) are an easy consequence of the definitions, and are left to the reader.
Let us show (3) by proving the second condition of Lemma \ref{lem:valuative} for $\widehat H$; alternatively, (3) can be deduced from Theorem \ref{thm:rigid-to-valuative} combined with Lemma \ref{lem:non-valuative-comparable}.
Assume WLOG that $\widehat H =  \widehat T$ and note that $-1 \in \widehat H$.
Suppose $x \in K^\times \smallsetminus \widehat H$.
Then there exists an index $i$ such that $x \notin \widehat H_i$ and thus $1+x = 1 \mod \widehat H_i$ or $1+x = x \mod \widehat H_i$.
On the other hand, there exists $z \in \widehat K$ such that $(x \mod \widehat H), (1+x \mod \widehat H) \in \langle z \mod \widehat H \rangle$.
Say $z^a = 1+x \mod \widehat H$ and $z^b = x \mod \widehat H$.
As $x \notin \widehat H_i$ one has $z \notin \widehat H_i$. Moreover, one also has $1+x = (z^{bc}) \mod \widehat H_i$ for $c = 0$ or $c = 1$.
But then $z^a = z^{bc} \mod \widehat H_i$ and since $\widehat K/\widehat H_i$ is torsion-free, we deduce that $a = bc$.
This shows that whenever $x \in K^\times$, $x \notin \widehat H$, one has $1+x \in \widehat H \cup x \cdot \widehat H$; denoting by $H = K^\times \cap \widehat H$, one has $1+x \in H \cup x \cdot H$ for all $x \notin H$.

A similar argument as above also shows that whenever $x,y \in K^\times$, $x,y \notin \widehat H_i$ such that $1+x,1+y \in \widehat H_i$, one has $1+x(1+y) \in \widehat H$ and so $1+x(1+y) \in H$.
So now we assume that $x \in \widehat H_j \smallsetminus \widehat H_i$ but $y \in \widehat H_i \smallsetminus \widehat H_j$ where $i \neq j$, and $1+x,1+y \in \widehat H$.
Then $\langle x, y \rangle \mod \widehat H$ is non-cyclic.
Consider then $1+x(1+y) = 1+x \cdot h$ for some $h \in K^\times \cap \widehat H$ and so $1+x(1+y) = 1 \mod \widehat H$ or $1+x(1+y) = x \mod \widehat H$.
On the other hand, $1+x(1+y) = 1+x+xy = h' + xy$ for some $h' \in K^\times \cap \widehat H$.
Thus $1+x(1+y) = 1 \mod \widehat H$ or $1+x(1+y) = xy \mod \widehat H$.
But $x,y$ are $\Z_\ell$-independent in $\widehat K / \widehat H$ so that $1+x(1+y) \in \widehat H$; thus, $1+x(1+y) \in H$, as required. 
\end{proof}

\begin{remark}
\label{rem:minimal-valuative-subgroup-for-a-rigid-subgroup}
As an immediate consequence of Lemma \ref{lem:rigid-valuative-properties}(3), we deduce that, if $T \leq K^\times$ is given such that $K^{\times \ell} \leq T$ and $T$ is rigid, then there exists a {\bf unique minimal} $H$, such that $T \leq H \leq K^\times$ and $H$ is valuative.
Similarly, if $\widehat T \leq \widehat K$ is given such that $\widehat K / \widehat T$ is torsion-free and $\widehat T$ is rigid, then there exists a {\bf unique minimal} $\widehat H$ such that $\widehat T \leq \widehat H \leq \widehat K$, $\widehat K / \widehat H$ is torsion-free and $\widehat H$ is valuative.
\end{remark}

The mod-$\ell$ case of the following theorem is a straight-forward application of established results; in fact, one direction of the the mod-$\ell$ case can be explicitly found in \cite{Koenigsmann1998} Proposition 3.1.
Below, we include the mod-$\ell$ situation alongside our pro-$\ell$ case for sake of completeness.

\begin{thm}
\label{thm:rigid-to-valuative}
Let $K$ be an arbitrary field and let $T$ be given such that $\langle \Kl,-1 \rangle \leq T \leq K^\times$.
The following are equivalent:
\begin{enumerate}
\item There exists $H$ such that $T \leq H \leq K^\times$, $H/T$ is cyclic (possibly trivial), $H$ is valuative and $1+\mf_{v_H} \leq T$.
\item $T$ is rigid.
\end{enumerate}

Let $\widehat T \leq \widehat K$ be given such that $\widehat K/\widehat T$ is torsion-free.
Then the following are equivalent:
\begin{enumerate}
\item There exists $\widehat H$ such that $\widehat T \leq \widehat H \leq \widehat K$, $\widehat H/\widehat T$ is $\Z_\ell$-cyclic (possibly trivial), $\widehat H$ is valuative and $1+\mf_{v_{\widehat H}} \leq \widehat T$.
\item $\widehat T$ is rigid.
\end{enumerate}
\end{thm}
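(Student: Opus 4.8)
The plan is to prove the theorem first in the mod-$\ell$ case and then indicate the (minor) modifications needed in the pro-$\ell$ case, since the two arguments are essentially identical once one replaces $K^\times/T$ by $\widehat K/\widehat T$, ``cyclic'' by ``$\Z_\ell$-cyclic'', and uses the torsion-freeness hypothesis exactly where the mod-$\ell$ case uses that $\ell$ is prime. For the implication $(1)\Rightarrow(2)$: given $H$ with $H/T$ cyclic, $H$ valuative and $1+\mf_{v_H}\leq T$, set $v=v_H$. For $x\in K^\times\smallsetminus T$ we must show $\langle x,1-x\rangle$ is cyclic mod $T$. If $x\notin H$, then since $H$ is valuative, Lemma~\ref{lem:valuative} gives $1+x\in H\cup xH$ (applied with $-x$ in place of $x$, so $1-x\in H\cup xH$); combined with $H/T$ cyclic this forces $\langle x,1-x\rangle \bmod T$ into the cyclic group generated by the image of $x$ together with the cyclic group $H/T$ — one checks the relevant subgroup of $K^\times/T$ is still cyclic because $x\cdot(H/T)$-coset considerations collapse it. If instead $x\in H\smallsetminus T$, then $v(x)=0$ (as $v=v_H$ and $\Oc_v^\times\leq H$, and in fact one uses that $x$ is a unit), so $x\in\Oc_v^\times$; then $1-x\equiv 1-\bar x \pmod{\mf_v}$ and, using $1+\mf_v\leq T$, the image of $1-x$ in $K^\times/T$ is controlled by the residue $\bar x\in k(v)^\times$. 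Here the key point is that $\Oc_v^\times/(\Oc_v^\times\cap T)\hookrightarrow H/T$ is cyclic, and rigidity reduces to a statement in $k(v)$; since $H/T$ cyclic forces the relevant quotient of $k(v)^\times$ to be cyclic of order dividing that of $H/T$, the pair $\langle x,1-x\rangle$ lands in a cyclic group.

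For the converse $(2)\Rightarrow(1)$, the strategy is: assume $T$ is rigid. If $T=K^\times$ (or $\widehat T=\widehat K$) the statement is trivial with $H=T$ and $v_H$ trivial, so assume $T$ is proper. By Lemma~\ref{lem:rigid-valuative-properties}(3) (Remark~\ref{rem:minimal-valuative-subgroup-for-a-rigid-subgroup}) there is a unique minimal valuative $H$ with $T\leq H\leq K^\times$ — but one must first produce \emph{at least one} valuative $H\supseteq T$. This is where I expect the main work to lie. The idea, following \cite{Arason1987}/\cite{Koenigsmann1995}, is to define $H$ directly from $T$: take $H$ to be the subgroup generated by $T$ together with all $x\in K^\times$ for which $\langle x,1-x\rangle$ is non-cyclic mod $T$ is \emph{empty} of witnesses — more precisely, one sets $H := \{x\in K^\times : xT \text{ lies in the ``units part''}\}$, or one invokes the rigid-element machinery of \cite{Arason1987} Theorem 2.10 to build the valuation $v$ whose ring has $1+\mf_v\leq T$ and $\Oc_v^\times$ as large as possible. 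Concretely: show that $H_0 := T\cup\{x: 1-x\in T\}\cdot(\text{stuff})$ generates a valuative subgroup. The cleanest route is to verify the three conditions of Lemma~\ref{lem:valuative} for a candidate $H$, using rigidity of $T$ repeatedly to control sums $1+x$ and the ``preadditivity'' clause. Once $H$ is valuative with $T\leq H$, set $v=v_H$; then $\Oc_v^\times\leq H$ by Lemma~\ref{lem:v_H}. It remains to check (i) $H/T$ is cyclic and (ii) $1+\mf_v\leq T$. For (ii): if $1+m\notin T$ for some $m\in\mf_v$, apply rigidity to $x=-m$ (note $1-x=1+m$), and since $m\in\mf_v$ one has $-m\notin\Oc_v^\times\supseteq$ small units, derive that $\langle -m, 1+m\rangle$ mod $T$ must be cyclic, which combined with $-m\in\mf_v$ forces $1+m\in T$ after all — a contradiction using that $\mf_v$-elements and their ``$1+$'' translates cannot generate the same cyclic subgroup unless the translate is already trivial mod $T$. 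For (i): use that $\Oc_v^\times\leq H$ and that $H/(\text{subgroup generated by }\Oc_v^\times\text{ and }T)$ embeds into $\Gamma_v$-data; rigidity of $T$ combined with the minimality/construction of $v$ forces this to be cyclic — essentially because a non-cyclic quotient would produce two independent ``directions'' in the value group witnessed by a non-rigid pair.

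The main obstacle, as flagged, is the construction of the valuative overgroup $H$ of $T$ from rigidity alone — i.e. bootstrapping from the local cyclicity condition to the full set of axioms in Lemma~\ref{lem:valuative}, and simultaneously arranging $1+\mf_{v_H}\leq T$ and $H/T$ cyclic. This is exactly the ``$\ell$-rigid calculus'' of Koenigsmann and the Arason–Elman–Jacob valuation-theoretic package; the honest approach is to cite \cite{Arason1987} Theorem 2.10 (as was done for Lemma~\ref{lem:valuative}) with a carefully chosen ``$T$'' in their sense — namely a set squeezed between $T$ and its rigid saturation — and then translate their output valuation ring into the statement here. The pro-$\ell$ case then follows verbatim: the torsion-freeness of $\widehat K/\widehat T$ is used precisely where, in the mod-$\ell$ argument, one cancels exponents in a cyclic group (e.g. deducing $a=bc$ as in the proof of Lemma~\ref{lem:rigid-valuative-properties}(3)), so every step transports, with ``cyclic'' systematically replaced by ``$\Z_\ell$-cyclic'' and $v_H$ by $v_{\widehat H}$.
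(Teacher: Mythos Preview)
Your $(1)\Rightarrow(2)$ sketch is essentially the paper's argument, though you overcomplicate the case $x\notin H$: there one simply uses $v(x)\neq 0$ and $1+\mf_v\leq T$ directly, without any appeal to cyclicity of $H/T$.

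The genuine gap is in $(2)\Rightarrow(1)$, and specifically in your claim that the pro-$\ell$ case ``follows verbatim''. It does not. In the mod-$\ell$ case the paper defines $H$ as the subgroup generated by all $x$ with $T+xT\not\subset T\cup xT$, cites \cite{Koenigsmann1998} Lemma~3.3 to get $H/T$ cyclic, and then \cite{Arason1987} Theorem~2.16 (not 2.10) to produce a valuative $\widetilde H\supseteq H$ with $1+\mf_{v_{\widetilde H}}\leq T$ and $[\widetilde H:H]\leq 2$. Your plan is roughly this, though your sketch of why $H/T$ is cyclic --- ``a non-cyclic quotient would produce two independent directions in the value group'' --- is not the right mechanism: the elements generating $H$ are precisely those with $v(x)=0$, so the value group is irrelevant here.

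In the pro-$\ell$ case there is no off-the-shelf analogue of Koenigsmann's lemma, and this is where the real work lies. The paper assumes for contradiction that $\widehat H/\widehat T$ is non-cyclic, picks $x,y\in K^\times$ with $1+x\notin T\cup xT$, $1+y\notin T\cup yT$, and $\langle x,y\rangle\bmod\widehat T$ non-cyclic, and then runs a lengthy projective-geometry argument in $\Pbb^2(\Q_\ell)$: rigidity forces the map $\Psi$ sending $a+bx+cy$ to a point of $\Pbb^2$ to send sums to collinear points, and an explicit $19$-step induction shows $\Psi^{-1}(\Pbb^2(\Q))$ lands in a rank-$2$ $\Z_\ell$-lattice $V_0\subset\Q_\ell^2$. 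Since $\Pbb^2(\Q)$ is dense in $\Pbb^2(\Q_\ell)$ but a lattice is not, this is a contradiction. This argument is the technical heart of the theorem and is what replaces the algebraically-closed-subfield hypothesis in Bogomolov--Tschinkel. Your proposal does not anticipate this, and the torsion-freeness of $\widehat K/\widehat T$ alone (used for exponent cancellation) is nowhere near enough to transport the mod-$\ell$ argument.
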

\begin{proof}
First we show the mod-$\ell$ case.
Assume (1) and denote by $v = v_H$.
Let $x \notin T$ be given.
If $v(x) > 0$ then $1-x \in T$, while $v(x) < 0$ implies that $(1/x) \cdot (1-x) \in T$.
On the other hand, if $v(x) = 0$ then $x \in H$ and so $H/T = \langle x \mod T \rangle$.
Now, $v(1-x) = 0$ for otherwise $1-x \in \mf_v$ and so $x \in T$.
Thus, $1-x \in H$ so that $\langle 1-x,x \rangle  \mod T$ is cyclic.

The converse is \cite{Koenigsmann1998} Lemma 3.3 combined with \cite{Arason1987} Theorem 2.16 as follows.
Denote by $H$ the subgroup of $K^\times$ generated by all $x \in K^\times$ such that $T + x \cdot T \not\subset T \cup x \cdot T$ (in particular, $T \leq H$).
By \cite{Koenigsmann1998} Lemma 3.3, $H/T$ is cyclic in the case where $\ell \neq 2$ while $H = T$ if $\ell = 2$.
By \cite{Arason1987} Theorem 2.16, there exists $\widetilde H$ such that $H \leq \widetilde H \leq K^\times$, $[\widetilde H : H] \leq 2$, $\widetilde H$ is valuative, and $1+\mf_{v_{\widetilde H}} \subset T$.
If $\ell \neq 2$ then $\widetilde H = H$; in any case then, $\widetilde H / T$ is cyclic, as required.

In the pro-$\ell$ case, the fact that $(1) \Rightarrow (2)$ is similar. 
Let us show that $(2) \Rightarrow (1)$.
Denote by $T = K^\times \cap \widehat T$ and note that $-1 \in T$.
Denote by $H$ the subgroup of $K^\times$ generated by $x \in K^\times$ such that $T+x \cdot T \not\subset T \cup x \cdot T$ (note that $T \leq H$), and we denote by $\widehat H$ the (complete) submodule of $\widehat K$ generated by $H$ and $\widehat T$.
We proceed to show that $\widehat H / \widehat T$ is $\Z_\ell$-cyclic.
Suppose not, then there exist $x,y \in K^\times$ such that $\langle x , y \rangle \mod \widehat T$ is non-cyclic, and both $1+x \notin T \cup x T$, $1+y \notin T \cup yT$.
On the other hand, $\widehat T$ is rigid.
Thus, $1+x \in \langle x \mod \widehat T \rangle \otimes_{\Z_\ell} \Q_\ell$ and $1+y \in \langle y \mod \widehat T \rangle \otimes_{\Z_\ell} \Q_\ell$; denote by $V := \langle x \mod \widehat T, y \mod \widehat T \rangle \otimes_{\Z_\ell} \Q_\ell$.
Moreover, denote by $V_0$ the collection of all $z \in \widehat K / \widehat T$ such that there exists $a \in \Z_\ell$ with $z^a \in \langle x , y \rangle \mod \widehat T$.
Then $V_0$ is a free $\Z_\ell$-module of rank two and the canonical map $V_0 \rightarrow V$ is injective.
We abuse the notation and denote by $z = z \mod \widehat T$ for $z \in \widehat K$ and/or $z \in K^\times$; moreover, we consider the basis $x,y$ for $V$ and we identify $V = \Q_\ell^2$ using this basis.
Finally, we embed $\Q_\ell^2$ into $\Pbb^2(\Q_\ell)$ via $(a,b) \mapsto (1:a:b)$.
Lastly, we abuse the notation even more by denoting the elements of $\Q_\ell^2$ by $(a,b) = (1:a:b)$ while elements of $\Pbb^2(\Q_\ell) \smallsetminus \Q_\ell^2$ by $(a:b) = (0:a:b)$.
For $v,w \in \Pbb^2(\Q_\ell)$ such that $v \neq w$, we denote by $l(v,w)$ the unique line between $v$ and $w$, considering $\Pbb^2(\Q_\ell)$ as a projective space.

Note, the rigidity property of $\widehat T$ implies the following: Let $x',y' \in K^\times \cap V_0$ be such that $x' \neq y'$ considered as elements of $V$ (equivalently, $x' \neq y'$ considered as elements of $V_0$).
Then $x'+y' \in l(x',y') \cap V_0$.
Indeed, $x'+y' = x'(1+y'/x') = x' \cdot (y'/x')^a = (x')^{1-a} \cdot (y')^{a}$ for some $a \in \Q_\ell$.

Our strategy is now motivated by \cite{Bogomolov2007} Proposition 4.1.2; however, we do not assume the existence of an algebraically closed subfield of $K$ as in loc.cit. and this makes our proof much more technical. We obtain an obvious contradiction in the following claim:

\noindent{\bf Claim: } In the situation above, consider the unique isomorphism $\Psi \in \text{PGL}(\Pbb^2(\Q_\ell))$ such that $\Psi(0,0) = (0,0)$, $\Psi(0,1) = (0,1)$, $\Psi(1,0) = (1,0)$, $\Psi(1+x) = (1:0)$ and $\Psi(1+y) = (0:1)$.
Then $\Psi^{-1}(\Pbb^2(\Q)) \subset V_0$.

This provides a contradiction as $\Pbb^2(\Q) \subset \Pbb^2(\Q_\ell)$ is dense in the $\ell$-adic topology and so the same is true for $\Psi^{-1}(\Pbb^2(\Q))$.
However, $V_0$ is a $\Z_\ell$-lattice $V_0 \subset \Q_\ell^2 = \Abb^2(\Q_\ell) \subset \Pbb^2(\Q_\ell)$ and is not dense -- contradiction.
Before we prove this claim, we show how to finish the proof of the theorem under the assumption of the claim.
We deduce from the above contradiction that $\widehat H/\widehat T$ is cyclic.
Now we proceed using \cite{Arason1987} Theorem 2.16 in a similar fashion to the mod-$\ell$ case.
Indeed, loc.cit. implies that there exists $\widehat H_1 \leq \widehat K$ such that $[\widehat H_1 : \widehat H] \leq 2$, $\widehat H_1$ is valuative, and $1+\mf_{v_{\widehat H_1}} \subset \widehat T$.
If $\ell \neq 2$, this completes the proof as $\widehat H_1 = \widehat H$.
If $\ell = 2$, we note that $\widehat H_1/\widehat T$ is still cyclic by our torsion-free assumption on $\widehat K/\widehat T$, as required.

\

\poclaim
Recall that $\Psi(x) = (1,0)$, $\Psi(y) = (0,1)$, $\Psi(1+x) = (1:0)$ and $\Psi(1+y) = (0:1)$.
Recall furthermore that the rigidity property ensures that for all $x',y' \in K^\times \cap V_0$, $\Psi(x'+y')$ is contained in the line between $\Psi(x')$ and $\Psi(y')$.
To complete the proof of the claim, we show inductively that $(\star)$: $a+bx+cy \in V_0$ and $\Psi(a+bx+cy) = (b+c-a:b:c)$ for coprime integers $a,b,c$.
When we say $(\star)$ is satisfied by $(b+c-a:b:c) \in \Pbb^2(\Q_\ell)$, $a,b,c$ coprime integers, we mean that indeed $a+bx+cy \in V_0$ and $\Psi(a+bx+cy) = (b+c-a:b:c)$.

The proof of the claim contains many steps and each step relies heavily on the previous ones.
The key idea is to write an element of $K^\times$ as a sum/difference (of elements of $V_0$) in two ways.
This will then force this element to be in $V_0$ and its image under $\Psi$ to be in the intersection of the corresponding lines.
For example, $1+x+y = (1+x) + y = (1+y) + x$ and thus $\Psi(1+x+y)$ lies in the intersection $l(\Psi(1+x),\Psi(y)) \cap l(\Psi(1+y),\Psi(x))$ where $l(v,w)$ denotes the line between $v$ and $w$ in $\Pbb^2(\Q_\ell)$; since $\Psi(1+x) = (1:0)$, $\Psi(x) = (1,0)$, $\Psi(1+y) = (0:1)$ and $\Psi(y) = (0,1)$, we deduce that $\Psi(1+x+y) = (1,1)$.
In the many steps that follow, we omit the explicit details as above and, when needed, give the two sums.
\begin{enumerate}
\item $\Psi(1+x+y) = (1,1)$ since $1+x+y = (1+x)+y = (1+y)+x$, as above.
\item $\Psi(2+x+y) = (1:1)$ since $2+x+y = 1+(1+x+y) = (1+x)+(1+y)$.
\item $\Psi(x-y) = (-1:1)$ since $x-y = (1+x) - (1+y)$.
\item $\Psi(2+2x+y) = (2,1)$ since $2+2x+y =x+(2+x+y) = (1+x)+(1+x+y)$.
\item $\Psi(1+2x) = (2,0)$ since $1+2x = (2+2x+y) - (1+y) = x+(1+x)$.
\item Let $m \in \Z$ be given, $m \geq 1$. If $\Psi((m-1)+mx) = (m,0)$ for $m \geq 1$ then $\Psi((m+1)+(m+1)x+y) = (m+1,1)$ since $(m+1)+(m+1)x+y = (2+x+y) + ((m-1)+mx) = (m+mx+y)+(1+x)$.
\item Let $m \in \Z$ be given, $m \geq 1$. If $\Psi((m+1)+(m+1)x+y) = (m+1,1)$ then $\Psi(m+(m+1)x) = (m+1,0)$ since $m+(m+1)x = ((m+1)+(m+1)x+y) - (1+y) = ((m-1)+mx)+(1+x)$.
\item Thus, $\Psi(m+(m+1)x) = (m+1,0)$ and $\Psi(m+mx+y) = (m,1)$. Similarly, $\Psi(m+(m+1)y) = (0,m+1)$ and $\Psi(m+x+my) = (1,m)$.
\item Since $(m+mx+y) + (2+x+y) = (m+2)+(m+1)x+2y = ((m+1)+(m+1)x+y)+(1+y)$ we deduce that $\Psi((m+2)+(m+1)x+2y) = (m+1,2)$. Similarly, $\Psi((m+2)+2x+(m+1)y) = (2,m+1)$ and in this way we deduce $(\star)$ for the integer lattice in the first quadrant by replacing $(0,0)$, $(1,0)$ and $(0,1)$ with $(m,m)$, $(m+1,m)$ and $(m,m+1)$ respectively ($m > 0$), and proceeding inductively.
\item One has $\Psi(2+x) = (-1,0)$ since $2+x = (2+x+y) - y = 1+(1+x)$ and similarly $\Psi(2+y) = (0,-1)$.
\item Arguing in a similar way to above using $\Psi(2+x) = (-1,0)$ and $\Psi(2+y) = (0,-1)$, we deduce $(\star)$ for the whole lattice $\Z^2$.
\item $\Psi(x+y) = (1/2,1/2)$ since $x+y = (1+x+y) - 1$ and therefore $\Psi(x-1) = (1/2,0)$ since $x-1 = (x+y) - (1+y) = x-1$; similarly, $\Psi(y-1) = (0,1/2)$.
\item $\Psi(3+x) = (-1/2,0)$ since $3+x = (2+x) + 1 = (1-y) + (2+x+y)$ and similarly $\Psi(3+y) = (0,-1/2)$. In this way one obtains $(\star)$ for the half-lattice $1/2 \cdot \Z^2$, using the same process as above.
\item $\Psi(-1+x+y) = (1/3,1/3)$ since $-1+x+y = (-1+x)+y = (-1+y)+x$ and thus $\Psi(-2+x) = (1/3,0)$ since $-2+x = (-1+x+y)-(1+y) = (-1+x) - 1$; similarly, $\Psi(-2+y) = (0,1/3)$.
\item For $m > 0$, $m \in \Z$, inductively we deduce that $\Psi(-m+x+y) = (1/(m+2),1/(m+2))$ since $-m + x + y = (-m+y)+x = (-m+x)+y$. $\Psi(-m+x) = (1/(m+1),0)$ since $-m+x = (-(m-1)+x+y)-(1+y) = (-(m-1)+x)-1$ and similarly $\Psi(-m+y) = (0,1/(m+1))$. In this way one obtains $(\star)$ for $\Q^2$ in the first quadrant. 
\item Similarly to above, one has $\Psi(m+1+x) = (-1/m,0)$ and $\Psi(m+1+y) = (0,-1/m)$ for all $m > 0$, $m \in \Z$ and we thereby obtain $(\star)$ for $\Q^2$ by arguing as above.
\item For simplicity, denote by $x_\infty = 1+x$ and $y_\infty = 1+y$. An easy inductive argument shows that $\Psi(x_\infty+n\cdot y_\infty) = (1:n)$ for all $n \in \Z$. Similarly, $\Psi(m\cdot x_\infty+y_\infty) = (m:1)$ for all $m \in \Z$.
\item We then deduce that $\Psi(m\cdot x_\infty+n\cdot y_\infty) \in l(x_\infty,y_\infty)$ when $\gcd(m,n) = 1$ since $m\cdot x_\infty+n\cdot y_\infty = (x_\infty+(n-1)\cdot y_\infty) + ((m-1) \cdot x_\infty + y_\infty)$.
\item Finally, $\Psi(m \cdot x_\infty + n \cdot y_\infty) = (m:n)$ for $m,n$ with $\gcd(m,n) = 1$ inductively since $(m+n-1+mx+ny) + 1 = m \cdot x_\infty + n \cdot y_\infty$.
\end{enumerate}
We've thus proven $(\star)$ and the claim.
One should note that the contradiction already occurs in Step (6) above in the case where $\Char K > 0$; the required contradiction already occurs in Step (16) for $K$ with $\Char K = 0$ since $\Q^2 = \Abb^2(\Q) \subset \Abb^2(\Q_\ell)$ is also dense in $\Pbb^2(\Q_\ell)$.
Moreover, note that a similar claim, replacing $\Z_\ell$ and $\Q_\ell$ with $\Z/\ell$, could have been used as an alternative to \cite{Koenigsmann1998} Lemma 3.3 in the mod-$\ell$ case.
In any case, this completes the proof of Theorem \ref{thm:rigid-to-valuative}.
\end{proof}

\subsection{Rigidity and Milnor K-theory}
\label{sec:rigidity-milnor-k}

Let $K$ be a field.
The usual construction of the Milnor K-ring is as follows:
\[ K_n^M(K) = \frac{(K^\times)^{\otimes n}}{\langle a_1 \otimes \cdots \otimes a_n \ : \ a_i + a_j = 1 \ \text{ for some } \ 1 \leq i < j \leq n \rangle}. \]
The tensor product makes $K_*^M(K) := \bigoplus_n K_n^M(K)$ into a graded-commutative ring and we denote by $\{\bullet,\bullet\}$ the product $K_1^M(K) \times K_1^M(K) \rightarrow K_2^M(K)$.

More generally, let $T \leq K^\times$ be given.
We define $K_*^M(K)/T$ as the quotient of $K_*^M(K)$ by the graded ideal generated by $T \leq K^\times = K_1^M(K)$ or explicitly as follows:
\[ K_n^M(K)/T = \frac{(K^\times/T)^{\otimes n}}{\langle a_1 \cdot T \otimes \cdots \otimes a_n \cdot T \ : \ 1 \in a_i \cdot T + a_j \cdot T \ \text{ for some } \ 1 \leq i < j \leq n \rangle}. \]
Again, the tensor product makes $K_*^M(K)/T$ into a graded-commutative ring and we denote the product in this ring by $\{\bullet,\bullet\}_T$.
Moreover, one has a surjective map of graded-commutative rings: $K_*^M(K) \twoheadrightarrow K_*^M(K)/T$.
It is well known that $\{-1,x\} = \{x,x\} \in  K_2^M(K)$, for all $x \in K^\times$.
Thus the same is true in $K_2^M(K)/T$; namely, $\{-1,x\}_T = \{x,x\}_T$.
For more on the arithmetic properties of these canonical quotients of the Milnor K-ring, refer to Efrat \cite{Efrat2006}, \cite{Efrat2007} where they are systematically studied.

We define $\widehat K_n^M(K)$ as the $\ell$-adic completion of $K_n^M(K)$ which makes $\widehat K_*^M(K) := \bigoplus_n \widehat K_n^M(K)$ into a graded $\Z_\ell$-algebra which is complete in each degree.
For $\widehat T \leq \widehat K = \widehat K_1^M(K)$, we denote by $\widehat K_*^M(K)/\widehat T$ as follows.
Say $\widehat T$ is given by the compatible system $(T_m)_m$, $K^{\ell^n} \leq T_m \leq K^\times$ as above.
We denote $\widehat K_n^M(K)/\widehat T = \lim_{m} K_n^M(K)/T_m$.
This makes $\widehat K_*^M(K)/\widehat T = \bigoplus_n \widehat K_n^M(K)/\widehat T$ into a graded $\Z_\ell$-algebra which is, again, complete in each degree.

\begin{defn}
\label{defn:wedge}
Let $K$ be a field and let $T \leq K^\times$ be given such that $\Kl \leq T$ and assume that $-1 \in T$.
We denote by $\wedge^2_T K$ the quotient of $(K^\times/T) \otimes_\Z (K^\times/T)$ by the subgroup generated by elements of the form $x \otimes x$.
In particular if $\ell \neq 2$ this is precisely the usual wedge product of the $\Z/\ell$ vector space $K^\times/T$ with itself.

Since $-1 \in T$ and $\{-1,x\}_T = \{x,x\}_T = 0$ we deduce that in any case one has a canonical surjective map:
\[ \wedge^2_T K \rightarrow K_2^M(K)/T \]
which is compatible with products of elements from $K^\times/T = K_1^M(K)/T$.

In a similar way, for $\widehat T \leq \widehat K$ such that $\widehat K/\widehat T$ is torsion-free, we denote by $\widehat \wedge^2_{\widehat T} K$ the $\ell$-adic completion of $(\widehat K/\widehat T)\otimes_{\Z_\ell}(\widehat K/\widehat T)/\langle x \otimes x, \ x \in \widehat K/\widehat T \rangle$.
Since $\widehat K/\widehat T$ is torsion-free, one has $-1 \in \widehat T$ and, again, one has a canonical surjective map:
\[ \widehat\wedge^2_{\widehat T} K \rightarrow \widehat K_2^M(K)/\widehat T. \]
\end{defn}

The rigidity condition of $T \leq K^\times$ resp. $\widehat T \leq \widehat K$ translates to a useful condition in the structure of $K_*^M(K)/T$ resp. $\widehat K_*^M(K)/\widehat T$:

\begin{lem}
\label{lem:rigid-iff-wedge}
Let $K$ be a field and let $T \leq K^\times$ be given such that $\Kl \leq T$ and assume that $-1 \in T$.
The following are equivalent:
\begin{enumerate}
\item $T$ is rigid.
\item The canonical map $\wedge^2_T K \rightarrow K_2^M(K)/T$ is an isomorphism.
\item For all $x,y \in K^\times/T$ such that $x,y$ are $(\Z/\ell)$-independent in $K^\times/T = K_1^M(K)/T$, one has $\{x,y\}_T \neq 0 \in K_2^M(K)/T$.
\end{enumerate}

Let $\widehat T \leq \widehat K$ be given such that $\widehat K / \widehat T$ is torsion-free.
The following are equivalent:
\begin{enumerate}
\item $\widehat T$ is rigid.
\item The canonical map $\widehat \wedge^2_{\widehat T} K \rightarrow \widehat K_2^M(K)/\widehat T$ is an isomorphism.
\item For all $x,y \in \widehat K/\widehat T$ such that $\langle x,y \rangle \mod \widehat T$ is a non-cyclic $\Z_\ell$-module in $\widehat K/\widehat T = \widehat K_1^M(K)/\widehat T$, one has $\{x,y\}_{\widehat T} \neq 0 \in \widehat K_2^M(K)/\widehat T$.
\end{enumerate}
\end{lem}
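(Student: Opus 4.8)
The plan is to prove the mod-$\ell$ equivalences through the cycle $(1)\Rightarrow(2)\Rightarrow(3)\Rightarrow(1)$, and then to deduce the pro-$\ell$ statement by passing to the finite levels of $\widehat T$. Recall from Definition \ref{defn:wedge} that $\phi\colon\wedge^2_T K\to K_2^M(K)/T$ is always surjective, so (2) says exactly that $\phi$ is injective. Writing $\wedge^2_T K$ and $K_2^M(K)/T$ as the quotients of $(K^\times/T)^{\otimes 2}$ by the subgroups $S=\langle\,x\otimes x\,\rangle$ and $R=\langle\, aT\otimes bT\ :\ 1\in aT+bT\,\rangle$ respectively (the latter by the explicit formula for $K_2^M(K)/T$), one has $S\subseteq R$ because $\{x,x\}_T=0$ (from $-1\in T$ and the Steinberg relation), so $\phi$ exists and $\phi$ is an isomorphism if and only if $R\subseteq S$. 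For $(1)\Rightarrow(2)$ I would take a generator $aT\otimes bT$ of $R$, so $a t_1+b t_2=1$ with $t_1,t_2\in T$: if $a t_1\in T$ then $a\in T$, so the class of $a$ in $K^\times/T$ vanishes and $aT\otimes bT=0$ (symmetrically if $b t_2\in T$); otherwise $z:=a t_1\notin T$ and $1-z=b t_2\notin T$, so rigidity forces $\langle z,1-z\rangle\bmod T$ to be cyclic, whence the classes of $a\equiv z$ and $b\equiv 1-z$ are $\F_\ell$-dependent in $K^\times/T$ and $aT\otimes bT\mapsto 0$ in $\wedge^2_T K$. Thus $R\subseteq S$ and $\phi$ is an isomorphism.

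For $(2)\Rightarrow(3)$: if $x,y$ are $\F_\ell$-independent in $K^\times/T$, completing them to a basis shows $x\wedge y\neq 0$ in $\wedge^2_T K$ (valid also for $\ell=2$, since $-1\in T$ makes this the honest exterior square), hence $\{x,y\}_T=\phi(x\wedge y)\neq 0$. For $(3)\Rightarrow(1)$: if $T$ is not rigid, choose $z\notin T$ with $\langle z,1-z\rangle\bmod T$ non-cyclic; then $z$ and $1-z$ are independent in $K^\times/T$, while $\{z,1-z\}_T=0$ because $1\in zT+(1-z)T$, contradicting (3). This closes the cycle.

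For the pro-$\ell$ case, write $\widehat T$ via a compatible system $(T_n)$ with $K^{\times\ell^n}\leq T_n$ and $K^\times\cap\widehat T=\bigcap_n T_n$. Torsion-freeness of $\widehat K/\widehat T$ forces $-1\in\widehat T$ and hence $-1\in T_n$ for all $n$; moreover, since a quotient of a $\Z_\ell$-cyclic module is cyclic and $\Z_\ell$-cyclicity of the image of a finitely generated submodule of $\widehat K/\widehat T$ is detected on the finite quotients $K^\times/T_n$, rigidity of $\widehat T$ implies that each $T_n$ is rigid in the mod-$\ell^n$ sense. The implication $(1)\Rightarrow(2)$ proved above works verbatim with $K^{\times\ell^n}\leq T_n$ in place of $\Kl\leq T$, so each $\wedge^2_{T_n}K\to K_2^M(K)/T_n$ is an isomorphism, and taking the inverse limit over $n$ (which computes $\widehat\wedge^2_{\widehat T}K$ and $\widehat K_2^M(K)/\widehat T$) yields (2). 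The implications $(2)\Rightarrow(3)\Rightarrow(1)$ are as in the mod-$\ell$ case, with torsion-freeness allowing one to interchange ``non-$\Z_\ell$-cyclic'' with ``$\Z_\ell$-independent'' and to conclude $x\wedge y\neq 0$ for $\Z_\ell$-independent $x,y$.

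The step I expect to be the main obstacle is the pro-$\ell$ bookkeeping: checking carefully that rigidity of $\widehat T$ genuinely descends to every $T_n$ (so that no real $\Z_\ell$- versus $\Z$-coefficient subtlety intervenes), and that forming $\wedge^2$ and $K_2^M$ is compatible with the $\ell$-adic completion well enough that an isomorphism at every finite level yields an isomorphism after completion. The mod-$\ell$ heart of the statement, by contrast, is just the definition of the relations of $K_2^M(K)/T$ read against the rigidity hypothesis.
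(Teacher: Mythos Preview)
Your mod-$\ell$ cycle $(1)\Rightarrow(2)\Rightarrow(3)\Rightarrow(1)$ is correct and matches the paper's argument; you simply unpack the Steinberg generators of $R$ more explicitly than the paper does (the paper just says the kernel is generated by $(x\widehat T)\wedge((1-x)\widehat T)$ and that rigidity kills these).

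Where you and the paper diverge is in the pro-$\ell$ case. The paper argues \emph{directly} in the completed setting: it asserts that the kernel of $\widehat\wedge^2_{\widehat T}K\to\widehat K_2^M(K)/\widehat T$ is (topologically) generated by the classes $x\wedge(1-x)$ with $x\in K^\times\smallsetminus\widehat T$, and rigidity makes each of these vanish. You instead descend rigidity of $\widehat T$ to each level $T_n$, apply the mod-$\ell^n$ version of $(1)\Rightarrow(2)$, and then pass to the inverse limit. Your descent step is fine: if $x\notin T_n$ then $x\notin\widehat T$, and the image in $K^\times/T_n$ of the $\Z_\ell$-cyclic submodule $\langle x,1-x\rangle\subset\widehat K/\widehat T$ is exactly the subgroup generated by $x,1-x$, hence cyclic. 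The genuine work you flag---that $\widehat\wedge^2_{\widehat T}K\cong\varprojlim_n\wedge^2_{T_n}K$ and likewise for $\widehat K_2^M$---does hold (the transition maps are surjective between finitely presented $\Z/\ell^n$-quotients, so Mittag-Leffler applies and the limit of isomorphisms is an isomorphism), but note that the paper's one-line direct argument hides exactly the same limit fact inside the claim about generators of the kernel. So your route is not wrong, just a different bookkeeping of the same content; the paper's phrasing is shorter but no more rigorous on this point.
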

\begin{proof}
We show the pro-$\ell$ case here; the mod-$\ell$ case is similar.
Assume (1).
The kernel of the canonical surjective map $\widehat \wedge^2_{\widehat T} K \rightarrow \widehat K_2^M(K)/\widehat T$ is generated by elements of the form $(x \cdot \widehat T)\wedge ((1-x)\cdot \widehat T)$ as $x$ varies over all elements of $K^\times$ such that $x \notin \widehat T$.
However, all such elements are already trivial in $\widehat \wedge^2_{\widehat T} K$ as $\widehat T$ is rigid; thus we deduce (2).
$(3) \Rightarrow (1)$ is obvious since $\{1-x,x\} = 0$ for all $x \in K^\times$, $x \neq 1$.
$(2) \Rightarrow (3)$ follows immediately from the definition of $\wedge^2_T K$ resp. $\widehat \wedge^2_{\widehat T} K$.
\end{proof}

\begin{defn}
\label{defn:Z-of-T}
Let $K$ be a field and let $T \leq K^\times$ be given such that $\Kl \leq T$ and $-1 \in T$.
Denote by $\Hb(T)$ the intersection of all $H \leq K^\times$ such that:
\begin{itemize}
\item $T \leq H \leq K^\times$ and $K^\times / H$ is cyclic (and possibly trivial).
\item For all $H'$ such that $T \leq H' \leq K^\times$ and $K^\times/H'$ is cyclic, the intersection $H \cap H'$ is rigid.
\end{itemize}

Let $\widehat T \leq \widehat K$ be given such that $\widehat K / \widehat T$ is torsion-free.
We denote by $\Hb(\widehat T)$ the intersection of all $\widehat H \leq \widehat K$ such that:
\begin{itemize}
\item $\widehat T \leq \widehat H \leq \widehat K$ and $\widehat K / \widehat H$ is torsion-free cyclic (and possibly trivial).
\item For all $\widehat H'$ such that $\widehat T \leq \widehat H' \leq \widehat K$ and $\widehat K/\widehat H'$ is torsion-free cyclic, the intersection $\widehat H \cap \widehat H'$ is rigid.
\end{itemize}
\end{defn}

\begin{remark}
\label{rem:ZofT}
We first remark that $\Hb(T)$ resp. $\Hb(\widehat T)$ might be ``trivial'' (i.e. equal to $K^\times$ resp. $\widehat K$).
Also, $\Hb(T) = T$ if and only if $T$ is rigid ($\Kl \leq T \leq K^\times$) and $\Hb(\widehat T) = \widehat T$ if and only if $\widehat T$ is rigid ($\widehat T \leq \widehat K$ and $\widehat K / \widehat T$ is torsion-free).
Using Lemma \ref{lem:rigid-iff-wedge}, we deduce that $\Hb(T)$ resp. $\Hb(\widehat T)$ can be completely realized, as subgroups of $K^\times/T$ resp. $\widehat K / \widehat T$, using only the structure of $K_*^M(K)/T$ resp. $\widehat K_*^M(K)/\widehat T$.
On the other hand, the subgroups $\Hb(T)$ resp. $\Hb(\widehat T)$ have an alternative K-theoretic definition which we describe below.

Let $\Kl \leq T \leq K^\times$ be given.
From the definition of $\Hb(T)$ along with Lemma \ref{lem:rigid-valuative-properties}, we deduce that whenever $T \leq H \leq \Hb(T)$ is such that $\Hb(T) / H$ is cyclic, then $H$ is rigid.
Because of this, $\Hb(T)$ satisfies the following property: whenever $x \in K^\times \smallsetminus \Hb(T)$ and $y \in K^\times \smallsetminus T$ are such that $x,y$ are $(\Z/\ell)$-independent in $K^\times/T$, then $\{x,y\}_T \neq 0$.
Indeed, the images of $x,y$ in $K^\times/H$ are also independent for some $T \leq H \leq \Hb(T)$ such that $\Hb(T)/H$ is cyclic.
Moreover, arguing as in Lemma \ref{lem:rigid-iff-wedge}, $\Hb(T)$ is the unique minimal subgroup of $K^\times$ which contains $T$ and satisfies this property.

On the other hand, let $\widehat T \leq \widehat K$ be given such that $\widehat K / \widehat T$ is torsion-free.
It follows from the definition of $\Hb(\widehat T)$ that $\widehat K / \Hb(\widehat T)$ is torsion-free.
Using Lemma \ref{lem:rigid-valuative-properties} as above, we deduce that if $\widehat T \leq \widehat H \leq \Hb(\widehat T)$ is such that $\Hb(\widehat T) / \widehat H$ is torsion-free cyclic, then $\widehat H$ is rigid.
Similarly to above, we deduce that $\Hb(\widehat T)$ satisfies the following property: whenever $x \in \widehat K \smallsetminus \Hb(\widehat T)$ and $y \in \widehat K \smallsetminus \widehat T$ are such that $\langle x,y \rangle \mod \widehat T$ is non-cyclic in $\widehat K / \widehat T$, then $\{x,y\}_{\widehat T} \neq 0$.
Moreover, arguing as in Lemma \ref{lem:rigid-iff-wedge}, $\Hb(\widehat T)$ is the unique minimal submodule of $\widehat K$ which contains $\widehat T$ and satisfies this property among all submodules $\widehat H$ such that $\widehat K/\widehat H$ is torsion-free.
\end{remark}

\subsection{Main Results}
\label{sec:main-results-rigid}

We are now ready to present and prove the main results of this section which allow us to detect valuations from the Milnor K-theory of a field $K$.
One should note that a more restricted version of the mod-$\ell$ case in the propositions below may be deduced from \cite{Efrat2007}.\footnote{Loc.cit. requires that either $\ell = 2$ or, in our notation, $K^\times/T$ is finite.}

\begin{prop}
\label{prop:rigid-kthy}
Let $K$ be an arbitrary field.
Let $-1 \in T \leq K^\times$ be given and assume that $T = \Hb(T)$.
Then there exists an $H$ such that $T \leq H \leq K^\times$, $H/T$ is cyclic (possibly trivial), $H$ is valuative and, denoting by $v = v_H$, one has $1+\mf_v \leq T$ and $\Oc_v^\times \leq H$.

Let $\widehat T \leq \widehat K$ be given and assume that $\widehat K / \widehat T$ is torsion-free.
Assume that $\Hb(\widehat T) = \widehat T$.
Then there exists an $\widehat H$ such that $\widehat T \leq \widehat H \leq \widehat K$, $\widehat H / \widehat T$ is cyclic (possibly trivial), $\widehat H$ is valuative and, denoting by $v = v_{\widehat H}$, one has $1+\mf_v \leq \widehat T$ and $\Oc_v^\times \leq \widehat H$.
\end{prop}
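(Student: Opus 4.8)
The statement to be proved is Proposition \ref{prop:rigid-kthy}. Observe first that its conclusion is essentially that of Theorem \ref{thm:rigid-to-valuative}(1): once we know $H/T$ (resp.\ $\widehat H/\widehat T$) is cyclic, $H$ (resp.\ $\widehat H$) is valuative, and $1+\mf_{v_H}\leq T$ (resp.\ $1+\mf_{v_{\widehat H}}\leq\widehat T$), then $\Oc_{v_H}^\times\leq H$ is automatic from the very definition of $v_H$ in Lemma \ref{lem:v_H}. So the real content is to produce such an $H$, and for that I would like to invoke Theorem \ref{thm:rigid-to-valuative}, which requires $T$ itself to be rigid. Thus the plan reduces to a single point: \emph{show that the hypothesis $\Hb(T)=T$ (resp.\ $\Hb(\widehat T)=\widehat T$) implies $T$ is rigid}, and then feed $T$ into Theorem \ref{thm:rigid-to-valuative}.

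For that implication I would cite Remark \ref{rem:ZofT}, where it is stated that $\Hb(T)=T$ if and only if $T$ is rigid (and likewise in the pro-$\ell$ case). If the intent is to make the argument self-contained rather than lean on the remark, I would argue directly from Definition \ref{defn:Z-of-T}: if $K^\times/T$ is itself cyclic there is nothing to prove (take $H=T$, which is then trivially rigid since there are no $(\Z/\ell)$-independent pairs, or take $H=K^\times$); otherwise $T$ is one of the subgroups $H$ appearing in the intersection defining $\Hb(T)$ (taking $H'=T$ as the companion subgroup forces the rigidity of $T\cap T=T$ as a side condition only when $K^\times/T$ is cyclic — more carefully, one observes that $\Hb(T)$ is by construction contained in every $H$ with $K^\times/H$ cyclic whose pairwise intersections with such subgroups are rigid, and the hypothesis $\Hb(T)=T$ then says $T$ itself has this property, and in particular $T=T\cap K^\times$ must be rigid). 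In any case the cleanest route is: quote Remark \ref{rem:ZofT} to get that $T$ (resp.\ $\widehat T$) is rigid.

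Having established rigidity of $T$, I would then apply Theorem \ref{thm:rigid-to-valuative}: the implication $(2)\Rightarrow(1)$ there gives exactly an $H$ with $T\leq H\leq K^\times$, $H/T$ cyclic (possibly trivial), $H$ valuative, and $1+\mf_{v_H}\leq T$. For the pro-$\ell$ half, I use the torsion-free hypothesis on $\widehat K/\widehat T$ (which is in place) together with the pro-$\ell$ case of Theorem \ref{thm:rigid-to-valuative}, obtaining $\widehat H$ with $\widehat H/\widehat T$ cyclic, $\widehat H$ valuative, and $1+\mf_{v_{\widehat H}}\leq\widehat T$. Finally, to get the extra clause $\Oc_v^\times\leq H$ (resp.\ $\Oc_v^\times\leq\widehat H$): by Definition \ref{defn:v_H}, $v=v_H$ is characterized as the coarsest valuation with $\Oc_{v_H}^\times\leq H$, so this inclusion holds by definition; in the pro-$\ell$ case, by the discussion following Definition \ref{defn:valuative}, $\widehat K\to\widehat K/\widehat H$ factoring through $\widehat v_{\widehat H}$ is equivalent to $\Oc_{v_{\widehat H}}^\times\leq H_n$ for all $n$, hence $\Oc_{v_{\widehat H}}^\times\leq\bigcap_n H_n=K^\times\cap\widehat H$, which is what $\Oc_v^\times\leq\widehat H$ means.

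**Main obstacle.** There is essentially no hard analytic step here — the difficult work has already been done inside the proof of Theorem \ref{thm:rigid-to-valuative} (the projective-geometry claim). The only genuine point requiring care is the equivalence ``$\Hb(T)=T\iff T$ rigid'' and its pro-$\ell$ analogue, i.e.\ unwinding Definition \ref{defn:Z-of-T} correctly, especially the degenerate cases where $K^\times/T$ (resp.\ $\widehat K/\widehat T$) is already cyclic and where the intersection defining $\Hb(T)$ is empty so that $\Hb(T)=K^\times$; one must check these do not sabotage the conclusion. I expect the write-up to be short: quote Remark \ref{rem:ZofT}, then quote Theorem \ref{thm:rigid-to-valuative}, then observe $\Oc_v^\times\leq H$ is built into Definition \ref{defn:v_H}.
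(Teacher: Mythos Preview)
Your proposal is correct and follows exactly the paper's approach: the paper's proof is two sentences, citing Remark \ref{rem:ZofT} (for the equivalence $\Hb(T)=T\iff T$ rigid) and Theorem \ref{thm:rigid-to-valuative}, just as you outline. The extra clause $\Oc_v^\times\leq H$ is indeed immediate from Definition \ref{defn:v_H}, and your handling of the pro-$\ell$ case is likewise the intended one.
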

\begin{proof}
This follows immediately from Remark \ref{rem:ZofT} and Theorem \ref{thm:rigid-to-valuative}.
Indeed, recall that $T = \Hb(T)$ iff $T$ is rigid and $\widehat T = \Hb(\widehat T)$ iff $\widehat T$ is rigid.
\end{proof}

\begin{prop}
\label{prop:main-kthy}
Let $K$ be an arbitrary field.
Let $-1 \in T \leq K^\times$ be given and denote by $\Hb(T) = H$.
Assume furthermore that $T \neq \Hb(T)$.
Then $H = \Hb(T)$ is valuative; denoting $v = v_H$ one furthermore has $1+\mf_v \leq T$ and $T \cdot \Oc_v^\times = H$.

Let $\widehat T \leq \widehat K$ be given such that $\widehat K / \widehat T$ is torsion-free and denote by $\widehat H = \Hb(\widehat T)$.
Assume furthermore that $\widehat T \neq \Hb(\widehat T)$.
Then $\widehat H = \Hb(\widehat T)$ is valuative; denoting $v = v_{\widehat H}$ one furthermore has $1+\mf_v \leq \widehat T$ and $\widehat H$ is the minimal submodule of $\widehat K$ such that $\widehat K/\widehat H$ is torsion-free, $\widehat T \leq \widehat H$ and $\Oc_v^\times \leq \widehat H$.
\end{prop}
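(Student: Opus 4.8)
The plan is to reduce everything to Theorem \ref{thm:rigid-to-valuative} by analyzing the structure of $H = \Hb(T)$ (resp. $\widehat H = \Hb(\widehat T)$). First I would invoke Remark \ref{rem:ZofT}: whenever $T \leq H' \leq \Hb(T)$ is such that $\Hb(T)/H'$ is cyclic, then $H'$ is rigid. In particular, $\Hb(T)$ itself is rigid, so by Theorem \ref{thm:rigid-to-valuative} applied to $H$ in place of $T$, there is a valuative subgroup $\mathcal{H}$ with $H \leq \mathcal{H} \leq K^\times$, $\mathcal{H}/H$ cyclic, and $1+\mf_{v_{\mathcal{H}}} \leq H$. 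The first key step will be to show that in fact $\mathcal{H} = H$, i.e. that $H = \Hb(T)$ is \emph{itself} valuative — this is where the hypothesis $T \neq \Hb(T)$ is essential, since it guarantees $H \supsetneq T$ and so there is ``room'' to pull the valuative structure down. I would argue that the pullback of the cyclic quotient means $\mathcal{H}/T$ contains $H/T$ with cyclic quotient, and because $H/T \neq 0$, if $\mathcal{H} \neq H$ then one can produce a subgroup $H''$ with $T \leq H'' \leq \mathcal{H}$, $\mathcal{H}/H''$ cyclic, yet $H''$ non-rigid (using Lemma \ref{lem:rigid-valuative-properties} and the fact that $\mathcal{H}$ is a valuative subgroup strictly containing the rigid $H$), contradicting that $\mathcal{H}$ satisfies the defining property that its cyclic-quotient subgroups are rigid — hence by minimality of $\Hb$, $\mathcal{H}$ would have to contain $\Hb(T) = H$ with equality. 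The precise bookkeeping here, disentangling ``$H/T$ cyclic'' from ``$\mathcal{H}/H$ cyclic'' and chaining the rigidity conclusions, is the main obstacle.

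Once $H$ is known to be valuative, set $v = v_H$ as in Definition \ref{defn:v_H}. That $1+\mf_v \leq T$: since $v_H$ is the coarsest valuation with $\Oc_v^\times \leq H$, and since $H$ is rigid, Theorem \ref{thm:rigid-to-valuative} applied to $T$ directly gives a valuative $H_1 \supseteq T$ with $H_1/T$ cyclic and $1+\mf_{v_{H_1}} \leq T$; I would check $v_{H_1}$ and $v_H$ coincide (both are the canonical valuation attached to a subgroup squeezed between $T$ and $\Hb(T)$, and by Lemma \ref{lem:v_H} the associated valuation ring depends only on the subgroup up to the coarsening process — here one uses that $H_1 \subseteq \Hb(T) = H$, forcing $v_{H_1}$ to be a coarsening of nothing new, i.e. $v_{H_1} = v_H$), whence $1+\mf_v = 1+\mf_{v_{H_1}} \leq T$.

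For the final identity $T \cdot \Oc_v^\times = H$: the inclusion $T \cdot \Oc_v^\times \subseteq H$ holds because $\Oc_v^\times \leq H$ (Lemma \ref{lem:v_H}) and $T \leq H$. For the reverse, I would use the explicit description in Remark \ref{rem:ZofT}: $H = \Hb(T)$ is the unique \emph{minimal} subgroup containing $T$ with the property that $\{x,y\}_T \neq 0$ whenever $x \notin H$, $y \notin T$ are independent; since $T \cdot \Oc_v^\times$ is valuative (it contains $\Oc_v^\times$, namely $v_{T \cdot \Oc_v^\times}$ is a coarsening of $v$), hence rigid by Lemma \ref{lem:rigid-valuative-properties}(1) applied to the rigid $T$—wait, more carefully: $T \cdot \Oc_v^\times$ contains $T$ and is valuative hence rigid, so it satisfies the minimality property defining $\Hb(T)$, and therefore $\Hb(T) \subseteq T \cdot \Oc_v^\times$, i.e. $H \subseteq T \cdot \Oc_v^\times$. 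In the pro-$\ell$ case the statement is phrased as $\widehat H$ being the minimal submodule with $\widehat K/\widehat H$ torsion-free, $\widehat T \leq \widehat H$, $\Oc_v^\times \leq \widehat H$; this follows by the identical argument, noting (as in Remark \ref{rem:ZofT} and Lemma \ref{lem:rigid-valuative-properties}(2)(3)) that one must restrict attention throughout to submodules with torsion-free quotient, and that $\Hb(\widehat T)$ already has this property. Throughout, the pro-$\ell$ arguments mirror the mod-$\ell$ ones with $\Z/\ell$ replaced by $\Z_\ell$ and ``cyclic'' replaced by ``torsion-free $\Z_\ell$-cyclic,'' exactly as the two previous propositions were handled.
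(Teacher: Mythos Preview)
Your plan has a genuine gap at the crucial step, and it is precisely the step that the paper handles differently.

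You correctly observe that $H = \Hb(T)$ is rigid, and applying Theorem~\ref{thm:rigid-to-valuative} to $H$ yields a valuative $\mathcal{H} \supseteq H$ with $\mathcal{H}/H$ cyclic. But your argument that $\mathcal{H} = H$ does not go through. You write that if $\mathcal{H} \neq H$ one can ``produce $H''$ with $T \leq H'' \leq \mathcal{H}$, $\mathcal{H}/H''$ cyclic, yet $H''$ non-rigid, contradicting that $\mathcal{H}$ satisfies the defining property that its cyclic-quotient subgroups are rigid.'' There is no such defining property for $\mathcal{H}$: the rigidity-of-cyclic-quotient-subgroups statement in Remark~\ref{rem:ZofT} applies to subgroups $H'$ with $T \leq H' \leq \Hb(T)$ and $\Hb(T)/H'$ cyclic, not to subgroups of $\mathcal{H}$. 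Nothing you have set up prevents $\mathcal{H}$ from strictly containing $H$, and ``rigid'' is genuinely weaker than ``valuative'' (rigidity only says $\langle x,1-x\rangle \bmod H$ is cyclic, not that $1-x \in H \cup xH$), so there is no shortcut here via the construction in the proof of Theorem~\ref{thm:rigid-to-valuative} either.

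The paper's proof takes an entirely different route to show $H$ is valuative: it argues by contradiction, assuming $H$ is \emph{not} valuative. One then picks two rigid subgroups $G_1,G_2$ with $T \leq G_i \leq H$, $H/G_i$ cyclic, and $H/(G_1 \cap G_2)$ non-cyclic; Theorem~\ref{thm:rigid-to-valuative} gives valuative $S_i \supseteq G_i$ with $1+\mf_{v_{S_i}} \leq G_i$. Now the key lemma you are missing is Lemma~\ref{lem:non-valuative-comparable}: since $H$ is non-valuative and $1+\mf_{v_i} \leq G_i \leq H$, the valuations $v_1,v_2$ are forced to be comparable. Hence $S_1 \cap S_2$ is valuative (so rigid), hence $G_1 \cap G_2 = H \cap S_1 \cap S_2$ is rigid by Lemma~\ref{lem:rigid-valuative-properties}, and intersecting over all such pairs forces $T$ itself to be rigid, contradicting $T \neq \Hb(T)$. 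The comparability input from the approximation theorem is the essential idea, and nothing in your plan substitutes for it.

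A secondary issue: in your argument for $1+\mf_v \leq T$ you propose to apply Theorem~\ref{thm:rigid-to-valuative} ``to $T$ directly,'' but $T$ is not rigid (this is exactly the hypothesis $T \neq \Hb(T)$), so that theorem does not apply to $T$. The paper instead uses that $T$ is the intersection of the rigid $H'$ with $T \leq H' \leq H$ and $H/H'$ cyclic, and handles $1+\mf_v \leq H'$ for each such $H'$ via Theorem~\ref{thm:rigid-to-valuative}. Your final paragraph on $T \cdot \Oc_v^\times = H$ is fine in spirit once $H$ is known to be valuative.
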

\begin{proof}
Again, we show the pro-$\ell$ case as the mod-$\ell$ case is essentially identical.
For this, we only need to show that $\widehat H$ is valuative.
Indeed, the rest follows from Lemma \ref{lem:rigid-valuative-properties}, Remark \ref{rem:ZofT} and Theorem \ref{thm:rigid-to-valuative} since $\widehat T$ is the intersection of all $\widehat H'$ such that $\widehat T \leq \widehat H' \leq \widehat H$, $\widehat H / \widehat H'$ is cyclic and $\widehat K / \widehat H'$ is torsion-free.
The minimality of $\Hb(\widehat T) = \widehat H$ ensures that then $\widehat H$ is generated by $\widehat T$ and $\Oc_v^\times$ as described above (see Theorem \ref{thm:rigid-to-valuative} and/or Lemma \ref{lem:rigid-in-res-field}).

Assume, for a contradiction, that $\widehat H$ is not valuative; in particular $\widehat H \neq \widehat K$.
Take $\widehat T \leq \widehat G_1,\widehat G_2 \leq \widehat H$ such that $\widehat K/\widehat G_i$ is torsion-free, $\widehat H/\widehat G_i$ is cyclic while $\widehat G_1 \cap \widehat G_2$ is non-cyclic (such $\widehat G_i$ exist since $\widehat T \neq \widehat \Hb(T)$).
Then $\widehat G_i$ are both rigid.
Take $\widehat S_i$ such that $\widehat G_i \leq \widehat S_i \leq \widehat K$, $\widehat S_i$ is valuative, $\widehat S_i/\widehat G_i$ is cyclic and, denoting $v_i = v_{\widehat S_i}$, $1+\mf_{v_i} \leq \widehat G_i$ as in Theorem \ref{thm:rigid-to-valuative}.
Furthermore, by enlarging $\widehat S_i$ if needed, we can assume with no loss that $\widehat K/\widehat S_i$ is torsion-free and thus $\widehat S_i \cap \widehat H = \widehat G_i$ since $\widehat S_i \not\subset \widehat H$ as $\widehat H$ is non-valuative.
Since $\widehat H$ is not valuative, we further deduce from Lemma \ref{lem:non-valuative-comparable} that $v_i$ are comparable.
In particular, $\widehat S_1 \cap \widehat S_2$ is valuative and thus rigid.
We deduce from this that $\widehat H \cap \widehat S_1 \cap \widehat S_2 = \widehat G_1 \cap \widehat G_2$ is rigid by Lemma \ref{lem:rigid-valuative-properties}.
From this we deduce that $\widehat T$ is rigid using, again, Lemma \ref{lem:rigid-valuative-properties}; this is because $\widehat T$ can be written as the intersection of all possible $\widehat G_i$ as above.
However, using Lemma \ref{lem:rigid-iff-wedge} and Remark \ref{rem:ZofT}, this contradicts the fact that $\widehat T \neq \Hb(\widehat T)$ by Remark \ref{rem:ZofT} (namely, $\widehat T = \Hb(\widehat T)$ if and only if $\widehat T$ is rigid).
\end{proof}

\subsection{Restricting the Characteristic}
\label{sec:restr-char}

We would like to describe a condition ensuring that $\Char k(v) \neq \ell$ as soon as $\Char K \neq \ell$ for the valuations $v$ produced in Propositions \ref{prop:rigid-kthy} and \ref{prop:main-kthy}.
Of course, if $\Char K > 0$, then $\Char k(v) = \Char K$ and so there is nothing to prove.
On the other hand, in \S \ref{sec:compatibility-coars} (see also Example \ref{ex:pro-ell-example}), we show that valuations $v$ with $\Char k(v) = \ell$ {\bf can and do} arise in Propositions \ref{prop:rigid-kthy} and \ref{prop:main-kthy} if $\Char K = 0$.

\begin{lem}
\label{lem:pops-lemma}
Let $(K,v)$ be a valued field such that $\Char K \neq \ell$.
Denote by $L = K(\sqrt[\ell^n]{1+\mf_v})$ and $w$ a chosen prolongation of $v$ to $L$.
Let $\Delta$ be the convex subgroup of $\Gamma_v$ generated by $v(\ell)$ (this is trivial unless $\Char k(v) = \ell$).
Then $\Delta \leq \ell^n \cdot \Gamma_w$.
\end{lem}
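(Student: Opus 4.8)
The plan is to analyze the extension $L = K(\sqrt[\ell^n]{1+\mf_v})\,|\,K$ valuation-theoretically and show that the hypothesis $\Char K \neq \ell$ forces the valuation of $\ell$ to become $\ell^n$-divisible after passing to a prolongation $w$. The key observation is that every element $1 + m$ with $m \in \mf_v$ becomes an $\ell^n$-th power in $L$, so that $w(1+m) \in \ell^n \cdot \Gamma_w$ for all such $m$; we will choose $m$ judiciously to extract divisibility of $w(\ell)$.

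First I would reduce to the case $\Char k(v) = \ell$: if $\Char k(v) \neq \ell$ then $v(\ell) = 0$, the convex subgroup $\Delta$ generated by $v(\ell)$ is trivial, and the inclusion $\Delta \leq \ell^n \cdot \Gamma_w$ holds vacuously. So assume $\Char k(v) = \ell$ (hence $\Char K = 0$, since $\Char K \neq \ell$). Next I would use the hypothesis $\Char K \neq \ell$ together with the fact that $\ell \in \mf_v$ (as $v(\ell) > 0$): consider elements of the form $1 + \ell^j u$ for suitable units or elements $u$, or more directly $1 - \ell$ and its variants. The point is that $w(1 + m) \in \ell^n\Gamma_w$ for every $m \in \mf_v$, and by the ultrametric inequality $w(1+m) = w(m)$ whenever $w(m) > 0$, which happens whenever $m \in \mf_v$. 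Hence $w(m) \in \ell^n \cdot \Gamma_w$ for \emph{every} $m \in \mf_v$ with $w(m) > 0$; in particular $w(\ell) \in \ell^n \cdot \Gamma_w$, since $v(\ell) > 0$ gives $w(\ell) = v(\ell) > 0$ in $\Gamma_w$.

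The remaining step is to upgrade ``$w(\ell) \in \ell^n\Gamma_w$'' to ``$\Delta \leq \ell^n\Gamma_w$,'' i.e. to control the entire convex subgroup generated by $v(\ell)$, not just $v(\ell)$ itself. For this I would argue that $\ell^n \cdot \Gamma_w$, while not convex in general, does contain every element $\gamma \in \Gamma_w$ with $0 < \gamma \leq w(\ell)$: indeed such a $\gamma$ is of the form $w(a)$ for some $a \in \mf_v$ (after scaling into $K$ if necessary, using that $\Gamma_v$ is cofinal in $\Gamma_w$ up to the ramification index, or working directly in $\Gamma_w$ with elements of $\Oc_w$), and then the same argument applies: $w(1+a) = w(a) = \gamma \in \ell^n\Gamma_w$. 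Symmetrically for negative elements. Since $\Delta$ consists precisely of the $\gamma \in \Gamma_w$ with $-k\cdot w(\ell) \leq \gamma \leq k \cdot w(\ell)$ for some integer $k \geq 1$, and each $k\cdot w(\ell) \in \ell^n\Gamma_w$, writing $\gamma$ as a sum/difference of elements in the interval $[0, w(\ell)]$ (using that $\ell^n\Gamma_w$ is a subgroup) gives $\gamma \in \ell^n\Gamma_w$, as desired.

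The main obstacle I anticipate is the bookkeeping around ramification: an element of $\Gamma_w$ need not be the $w$-value of an element of $K$, so the naive ``$\gamma = v(a)$ for $a \in \mf_v$'' must be replaced by a statement about $\Oc_w$ and $L$, and one must verify carefully that for $a \in \mf_w$ (the maximal ideal of $\Oc_w$) the value $w(a)$ still lies in $\ell^n\Gamma_w$ — but this is exactly where one uses that $1+\mf_v$, not just $1 + \mf_w$, is being adjoined: one needs $a$ to be expressible via elements of $1 + \mf_v$, which may require an approximation/cofinality argument to replace a general element of $\mf_w$ below $w(\ell)$ by one coming from $\mf_v$. Handling this interval-covering argument cleanly, rather than the single inclusion $w(\ell) \in \ell^n\Gamma_w$, is the technical heart of the proof.
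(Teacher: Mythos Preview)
Your argument contains a basic error in the use of the ultrametric inequality. You write that ``by the ultrametric inequality $w(1+m) = w(m)$ whenever $w(m) > 0$,'' but the opposite is true: if $w(m) > 0$ then $w(1) = 0 \neq w(m)$, so the strict ultrametric inequality gives $w(1+m) = \min(w(1),w(m)) = 0$, not $w(m)$. Thus the fact that $1+m$ is an $\ell^n$-th power in $L$ only yields the tautology $0 \in \ell^n\Gamma_w$ and says nothing about $w(m)$. Everything built on this step --- in particular the conclusion $w(\ell) \in \ell^n\Gamma_w$ and the interval-covering argument for all of $\Delta$ --- collapses.

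The paper's proof avoids this by working with the $\ell^n$-th \emph{root} rather than with $1+m$ itself. Given $x \in K^\times$ with $0 < v(x) \leq v(\ell)$, write $1+x = (1+y)^{\ell^n}$ with $y \in L$; one checks $y \in \mf_w$. Expanding binomially gives $x = \ell^n y \epsilon + y^{\ell^n}$ for some unit $\epsilon \in \Oc_w^\times$ (all intermediate binomial terms carry a factor of $\ell$). Now $w(x) \leq w(\ell) < w(\ell^n) < w(\ell^n y \epsilon)$, so the ultrametric inequality forces $w(x) = w(y^{\ell^n}) = \ell^n w(y) \in \ell^n\Gamma_w$. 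This handles every $x$ in the interval $0 < v(x) \leq v(\ell)$ at once, which is exactly the generating interval for $\Delta$; there is no separate ``upgrade'' step or ramification bookkeeping needed.
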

\begin{proof}
If $\Char k(v) \neq \ell$ then $v(\ell) = 0$ and the lemma is trivial.
So assume that $\Char k(v) = \ell$.
Let $x \in K^\times$ be such that $0 < v(x) \leq v(\ell)$ and so $1+x \in L^{\times \ell^n}$.
Take $y \in L$ such that $1+x = (1+y)^{\ell^n}$.
Note that $y \in \Oc_w$ and since $1+x = (1+y)^{\ell^n} = 1+y^{\ell^n} \mod \mf_w$, we deduce that $y \in \mf_w$.
Expanding the equation $1+x = (1+y)^{\ell^n}$ we deduce that $x = \ell^n \cdot y \cdot \epsilon + y^{\ell^n}$ for some $\epsilon \in \Oc_w^\times$.
But $w(x) < w(\ell^n) < w(\ell^n \cdot y \cdot \epsilon)$ so finally $w(x) = w(y^{\ell^n})$ by the ultrametric inequality.
\end{proof}

\begin{prop}
\label{prop:rigid-kthy-with-char}
Let $K$ be an arbitrary field such that $\Char K \neq \ell$ and $\mu_\ell \subset K$.
Let $-1 \in T \leq K^\times$ be given and denote by $L = K(\sqrt[\ell]{T})$.
Assume there exists a $T' \leq L^\times$ with $L^{\times \ell} \leq T'$ such that $T' \cap K^\times = T$ and $T' = \Hb(T')$ (this implies in particular that $T = \Hb(T)$).
Then there exists an $H$ such that $T \leq H \leq K^\times$, $H/T$ is cyclic (possibly trivial) and $H$ is valuative; denoting by $v = v_H$, $H$ can be chosen so that furthermore $1+\mf_v \leq T$, $\Oc_v^\times \leq H$ and $\Char k(v) \neq \ell$.

Assume furthermore that $\mu_{\ell^\infty} \subset K$.
Let $\widehat T \leq \widehat K$ be given such that $\widehat K / \widehat T$ is torsion-free and denote by $L = K\left(\sqrt[\ell^\infty]{\widehat T \cap K^{\times}}\right)$.
Assume there exists $\widehat T' \leq \widehat L$ such that $\widehat L/\widehat T'$ is torsion-free, $\widehat T' \cap \widehat K = \widehat T$ and $\widehat T' = \Hb(\widehat T')$ (this implies in particular that $\widehat T = \Hb(\widehat T)$).
Then there exists an $\widehat H$ such that $\widehat T \leq \widehat H \leq \widehat K$, $\widehat H/\widehat T$ is cyclic (possibly trivial) and $\widehat H$ is valuative; denoting by $v = v_{\widehat H}$, $\widehat H$ can be chosen so that furthermore $1+\mf_v \leq \widehat T$, $\Oc_v^\times \leq \widehat H$ and $\Char k(v) \neq \ell$.
\end{prop}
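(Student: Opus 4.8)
The plan is to apply Proposition~\ref{prop:rigid-kthy} to the extension $L$ rather than to $K$, descend the resulting valuation to $K$, and then pass to the coarsening that removes the residue characteristic $\ell$; the hypothesis on $T'$ is precisely what forces the resulting subgroup to remain cyclic over $T$. First, since $T' \cap K^\times = T$, the map $K^\times/T \hookrightarrow L^\times/T'$ is injective, so $T$ is rigid as soon as $T'$ is; in particular $T = \Hb(T)$. Applying Proposition~\ref{prop:rigid-kthy} to the rigid subgroup $T' = \Hb(T')$ of $L^\times$ produces $H'$ with $T' \leq H' \leq L^\times$, $H'/T'$ cyclic, $H'$ valuative, and, writing $w := v_{H'}$, $1 + \mf_w \leq T'$ and $\Oc_w^\times \leq H'$; moreover $L^{\times \ell} \leq T' \leq H'$. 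Put $v_0 := w|_K$ and $H_0 := H' \cap K^\times$. Then $1 + \mf_{v_0} \leq (1 + \mf_w) \cap K^\times \leq T' \cap K^\times = T$ and $\Oc_{v_0}^\times \leq \Oc_w^\times \cap K^\times \leq H_0$, so $H_0$ is valuative by Lemma~\ref{lem:valuative}; also $H_0/T \hookrightarrow H'/T'$, so $H_0/T$ is cyclic, and $T \leq H_0$.

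Now let $\Delta$ be the convex subgroup of $\Gamma_{v_0}$ generated by $v_0(\ell)$ (trivial exactly when $\Char k(v_0) \neq \ell$), and let $v$ be the coarsening of $v_0$ with $\Gamma_v = \Gamma_{v_0}/\Delta$. Then $v(\ell) = 0$, so $\Char k(v) \neq \ell$, while $1 + \mf_v \leq 1 + \mf_{v_0} \leq T$. Set $H := \Oc_v^\times \cdot T$; it is valuative by Lemma~\ref{lem:valuative} (witnessed by $v$), and $T \leq H$. The key point is that $\Oc_v^\times \leq H_0$: since $1 + \mf_{v_0} \leq T$ we have $K(\sqrt[\ell]{1+\mf_{v_0}}) \subseteq L$, so Lemma~\ref{lem:pops-lemma}, applied to $v_0$ with $n = 1$ and with the prolongation of $v_0$ to $K(\sqrt[\ell]{1+\mf_{v_0}})$ obtained by restricting $w$, gives $\Delta \leq \ell \Gamma_w$. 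Hence for any $x \in \Oc_v^\times$ we have $w(x) = v_0(x) \in \Delta \leq \ell \Gamma_w = \ell\cdot w(L^\times)$, so $x = u y^\ell$ with $u \in \Oc_w^\times$ and $y \in L^\times$; as $u \in \Oc_w^\times \leq H'$ and $y^\ell \in L^{\times \ell} \leq H'$, we get $x \in H' \cap K^\times = H_0$. Therefore $H = \Oc_v^\times \cdot T \leq H_0$, and so $H/T \leq H_0/T$ is cyclic.

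Finally, by Lemma~\ref{lem:v_H} the valuation $v_H$ is a coarsening of $v$ (since $\Oc_v^\times \leq H$), hence still satisfies $\Char k(v_H) \neq \ell$, $1 + \mf_{v_H} \leq 1 + \mf_v \leq T$ and $\Oc_{v_H}^\times \leq H$; renaming $v := v_H$ finishes the mod-$\ell$ case. The pro-$\ell$ statement follows from the identical argument, with $\sqrt[\ell]{\,\cdot\,}$ replaced by $\sqrt[\ell^\infty]{\,\cdot\,}$ and ``cyclic'' by ``$\Z_\ell$-cyclic'', invoking Lemma~\ref{lem:pops-lemma} with $n$ ranging over all powers of $\ell$ and using, as in Lemma~\ref{lem:rigid-valuative-properties} and Theorem~\ref{thm:rigid-to-valuative}, that torsion-freeness of $\widehat K/\widehat T$ is inherited by $\widehat H_0$ and by the restriction from $\widehat L$ to $\widehat K$ (which is what guarantees $K(\sqrt[\ell^n]{1+\mf_{v_0}}) \subseteq L$). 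The one place requiring care is the key claim $\Oc_v^\times \leq H_0$, i.e. identifying Pop's Lemma (in the form $\Delta \leq \ell\Gamma_w$) as exactly the input that lets one push the units of the coarsened valuation back into $H_0$; once this is seen the rest is formal.
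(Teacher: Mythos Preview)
Your argument is correct and follows essentially the same route as the paper: apply Proposition~\ref{prop:rigid-kthy} over $L$, restrict the valuation $w$ to $K$, and use Lemma~\ref{lem:pops-lemma} to show that the convex hull $\Delta$ of $v_0(\ell)$ already lies in $w(H')$, so that passing to the associated coarsening kills the residue characteristic. The only cosmetic difference is organizational: the paper works directly with $H_0 = K^\times \cap H'$ and observes that $\Delta \subset v_0(H_0)$ (via the injection $\Gamma_{v_0}/v_0(H_0) \hookrightarrow \Gamma_w/w(H')$), concluding immediately that $v_{H_0}(\ell)=0$; you instead build the coarsening $v$ first and set $H = \Oc_v^\times \cdot T \leq H_0$. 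Both lead to the same valuation and the same cyclicity bound.
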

\begin{proof}
As usual, we show the pro-$\ell$ case as the mod-$\ell$ case is similar.
By Proposition \ref{prop:rigid-kthy}, there exists $\widehat H'$ such that $\widehat T' \leq \widehat H' \leq \widehat L$, $\widehat H'/\widehat T'$ is cyclic, $\widehat H'$ is valuative and $1+\mf_{v_{\widehat H'}} \subset \widehat T'$; for simplicity, we denote by $w = v_{\widehat H'}$.
Denote by $v$ the restriction of $w$ to $K$, and denote by $\widehat H = \widehat K\cap \widehat H'$.
Our assumptions ensure that $\widehat H/\widehat T$ is cyclic, and by basic valuation theory, one has $1+\mf_{v} \subset \widehat T$ and $\Oc_{v}^\times \subset \widehat H$.
By construction, the canonical map $\widehat \Gamma_{v} / \widehat v(\widehat H) \rightarrow \widehat \Gamma_w/\widehat w(\widehat H')$ is injective.
Denote by $\Delta$ the convex subgroup of $\Gamma_v$ generated by $v(\ell)$.
By Lemma \ref{lem:pops-lemma}, $\widehat \Delta \subset \widehat v(\widehat H)$ and thus $\Delta \subset v(K^\times \cap \widehat H)$.
In particular, $\Delta$ is in the kernel of the projection $\Gamma_{v} \rightarrow \Gamma_{v_{\widehat H}}$ and thus $v_{\widehat H}(\ell) = 0$.
Since $v_{\widehat H}$ is a coarsening of $v$, one still has $1+\mf_{v_{\widehat H}} \leq \widehat T$ as required.
\end{proof}

\begin{prop}
\label{prop:main-kthy-with-char}
Let $K$ be an arbitrary field such that $\Char K \neq \ell$ and $\mu_\ell \subset K$.
Let $-1 \in T \leq K^\times$ be given and denote by $\Hb(T) = H$ and assume that $T \neq H$.
Denote by $L = K(\sqrt[\ell]{T})$ and assume that there exists $T' \leq L^\times$ such that $L^{\times \ell} \leq T'$, $T = K^\times \cap T'$ and $H = K^\times \cap \Hb(T')$.
Then $H$ is valuative; denoting $v = v_H$ one furthermore has $1+\mf_v \leq T$, $T \cdot \Oc_v^\times = H$ and $\Char k(v) \neq \ell$.

Assume furthermore that $\mu_{\ell^\infty} \subset K$.
Let $\widehat T \leq \widehat K$ be given such that $\widehat K/\widehat T$ is torsion-free and denote by $\Hb(\widehat T) = \widehat H$; assume that $\widehat T \neq \widehat H$.
Denote by $L = K\left(\sqrt[\ell^\infty]{\widehat T \cap K^\times}\right)$ and assume that there exists $\widehat T' \leq \widehat L$ such that $\widehat L/\widehat T'$ is torsion-free, $\widehat T = \widehat K \cap \widehat T'$ and $\widehat H = \widehat K\cap \Hb(\widehat T')$.
Then $\widehat H$ is valuative; denoting $v = v_{\widehat H}$ one furthermore has $1+\mf_v \leq \widehat T$, $\Char k(v) \neq \ell$.
Moreover, $\widehat H$ is the minimal submodule of $\widehat K$ such that $\widehat K/\widehat H$ is torsion-free, $\widehat T \leq \widehat H$ and $\Oc_v^\times \leq \widehat H$
\end{prop}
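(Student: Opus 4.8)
The plan is to deduce everything except the residue-characteristic assertion directly from Proposition~\ref{prop:main-kthy}, and then to obtain $\Char k(v) \neq \ell$ by adapting the argument of Proposition~\ref{prop:rigid-kthy-with-char}, with Proposition~\ref{prop:main-kthy} playing the role occupied there by Proposition~\ref{prop:rigid-kthy}. I will write out the pro-$\ell$ case; the mod-$\ell$ case is the same after the evident notational changes. Since $\widehat T \neq \Hb(\widehat T)$ by hypothesis, Proposition~\ref{prop:main-kthy} applied to $\widehat T$ over $K$ already shows that $\widehat H = \Hb(\widehat T)$ is valuative, that $v := v_{\widehat H}$ satisfies $1+\mf_v \leq \widehat T$, and that $\widehat H$ is the minimal submodule of $\widehat K$ with $\widehat K/\widehat H$ torsion-free, $\widehat T \leq \widehat H$ and $\Oc_v^\times \leq \widehat H$. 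So the only point left to prove is $v(\ell) = 0$.

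To get this I pass to $L = K(\sqrt[\ell^\infty]{\widehat T \cap K^\times})$. First, $\widehat T' \neq \Hb(\widehat T')$: otherwise $\widehat T = \widehat K \cap \widehat T' = \widehat K \cap \Hb(\widehat T') = \widehat H$, contradicting $\widehat T \neq \widehat H$; and $-1 \in \widehat T'$ since $\widehat L/\widehat T'$ is torsion-free. So Proposition~\ref{prop:main-kthy} applies to $\widehat T'$ over $L$ and gives that $\widehat H' := \Hb(\widehat T')$ is valuative and that, setting $w := v_{\widehat H'}$, one has $1+\mf_w \leq \widehat T'$ and $\Oc_w^\times \leq \widehat H'$. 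Put $v_1 := w|_K$. Then $\Oc_{v_1}^\times = K^\times \cap \Oc_w^\times \leq K^\times \cap \widehat H' = K^\times \cap \widehat H$ (using $\widehat H = \widehat K \cap \widehat H'$), so by Lemma~\ref{lem:v_H} the valuation $v = v_{\widehat H}$ is a coarsening of $v_1$, with $\ker(\Gamma_{v_1} \twoheadrightarrow \Gamma_v)$ equal to the maximal convex subgroup of $v_1(K^\times \cap \widehat H)$. Moreover $1+\mf_{v_1} \subseteq 1+\mf_w \subseteq \widehat T'$ forces $1+\mf_{v_1} \subseteq K^\times \cap \widehat T' = K^\times \cap \widehat T$ (since $\widehat T = \widehat K \cap \widehat T'$), so $K(\sqrt[\ell^n]{1+\mf_{v_1}}) \subseteq L$ for every $n \geq 1$.

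The heart of the matter is to show that the convex subgroup $\Delta \leq \Gamma_{v_1}$ generated by $v_1(\ell)$ lies in $v_1(K^\times \cap \widehat H)$: granting this, $\Delta$ lies in the maximal convex subgroup of $v_1(K^\times \cap \widehat H)$, hence in $\ker(\Gamma_{v_1} \twoheadrightarrow \Gamma_v)$, so $v(\ell) = 0$, which finishes the proof. To prove the inclusion, fix $n$ and apply Lemma~\ref{lem:pops-lemma} to $(K,v_1)$ with the prolongation $w' := w|_{K(\sqrt[\ell^n]{1+\mf_{v_1}})}$ of $v_1$; this yields $\Delta \leq \ell^n\,\Gamma_{w'} \leq \ell^n\,\Gamma_w$. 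Now take $\delta \in \Delta$ and write $\delta = v_1(a)$ with $a \in K^\times$. Then $w(a) \in \ell^n\,\Gamma_w$, so $a = b^{\ell^n} u$ with $b \in L^\times$ and $u \in \Oc_w^\times$; since the $n$-th constituent of $\widehat H'$ contains $L^{\times \ell^n}$ and also $\Oc_w^\times$ (the latter because $\Oc_w^\times \leq \widehat H'$), we get that $a$ lies in that constituent. As $n$ was arbitrary, $a \in K^\times \cap \widehat H' = K^\times \cap \widehat H$, so $\delta \in v_1(K^\times \cap \widehat H)$, as required. The mod-$\ell$ version runs identically, with $\ell$ replacing $\ell^n$ throughout, $L = K(\sqrt[\ell]{T})$, $T' \subseteq L^\times$ with $L^{\times \ell} \leq T'$, and the mod-$\ell$ form of Proposition~\ref{prop:main-kthy}.

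I do not expect a conceptual obstacle: the work is essentially bookkeeping—keeping straight the three valuations $w$ on $L$, $v_1 = w|_K$ on $K$, and the (in general proper) coarsening $v = v_{\widehat H}$ of $v_1$, and noticing that the $\ell$-adic saturation built into $\widehat T'$ and $\widehat H'$ is exactly what makes the factorization $a = b^{\ell^n}u$ certify membership of $a$ in $\widehat H$ at every level $n$. The only external inputs are Proposition~\ref{prop:main-kthy}, Lemma~\ref{lem:pops-lemma}, and the coarsening statement in Lemma~\ref{lem:v_H}.
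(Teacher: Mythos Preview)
Your proposal is correct and follows essentially the same route as the paper, which simply says to rerun the proof of Proposition~\ref{prop:rigid-kthy-with-char} with Proposition~\ref{prop:main-kthy} in place of Proposition~\ref{prop:rigid-kthy}. The only cosmetic difference is that you invoke Proposition~\ref{prop:main-kthy} twice (once over $K$ to pin down $v=v_{\widehat H}$, once over $L$ to get $w$), whereas the paper's template applies it only over $L$ and recovers the data over $K$ by restriction; your element-level argument showing $\Delta \subset v_1(K^\times \cap \widehat H)$ is a direct unpacking of the paper's use of the injection $\widehat\Gamma_{v_1}/\widehat v_1(\widehat H)\hookrightarrow \widehat\Gamma_w/\widehat w(\widehat H')$ together with Lemma~\ref{lem:pops-lemma}.
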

\begin{proof}
The proof is similar to that of Proposition \ref{prop:rigid-kthy-with-char} using Proposition \ref{prop:main-kthy} instead of Proposition \ref{prop:rigid-kthy}.
\end{proof}

\subsection{Compatibility in Coarsenings/Refinements}
\label{sec:compatibility-coars}

For a $\Z_\ell$-module $M$, denote by \[\tor(M) = \text{Tor}^1_{\Z_\ell}(M,\Q_\ell/\Z_\ell); \ \text{ this is the submodule of $\Z_\ell$-torsion in $M$.} \]
For simplicity, we denote by $M/\tor := M/\tor(M)$.
To conclude this section, we would like to describe how to precisely detect the image of $\Oc_v^\times$ and $(1+\mf_v)$ in $K^\times/\ell$ resp. $\widehat \Oc_v$ and $\widehat{1+\mf_v}$ (modulo torsion) for the valuations $v$ described in Propositions \ref{prop:rigid-kthy} and \ref{prop:main-kthy}.
Moreover, we explore the compatibility of this condition in taking coarsenings/refinements of $v$.

\begin{defn}
\label{defn:Ufs}
Let $(K,v)$ be a valued field.
We denote by $\Uf_v = \Oc_v^\times \cdot \Kl$ and $\Uf_v^1 = \langle 1+\mf_v , -1, \Kl \rangle$.
Observe that $\Uf_v/\Uf_v^1$ is isomorphic to $k(v)^\times/\ell$ via the canonical (surjective) map $\Oc_v^\times \rightarrow \Uf_v/\Uf_v^1$ if $\ell \neq 2$.
If $\ell = 2$, $\Uf_v/\Uf_v^1$ is isomorphic to $k(v) / \langle k(v)^{\times \ell}, -1 \rangle$.

We denote by $\Uc_v$ the $\ell$-adic completion of $\Oc_v^\times$.
Since $\Gamma_v$ is torsion-free, $\Uc_v$ is a sub-module of $\widehat K$ and $\widehat K / \Uc_v$ is precisely the $\ell$-adic completion of $\Gamma_v$.
In fact, the map $\widehat K \twoheadrightarrow \widehat K / \Uc_v = \widehat \Gamma_v$ is precisely the $\ell$-adic completion of the homomorphism $v : K^\times \twoheadrightarrow \Gamma_v$.

Consider the canonical map $\Uc_v \twoheadrightarrow \widehat{k(v)} \twoheadrightarrow \widehat{k(v)}/\tor$ and denote by $\Uc_v^1$ the kernel of this map.
In particular:
\begin{itemize}
\item $\widehat K / \Uc_v$ is torsion-free.
\item $\widehat K / \Uc_v^1$ and $\Uc_v / \Uc_v^1$ are torsion free.
\item There is a canonical isomorphism $\Uc_v / \Uc_v^1 \cong \widehat{k(v)}/\tor$.
\end{itemize}

For a subgroup $\Kl \leq T \leq K^\times$, $-1 \in T$, we denote by $T_v$ the image of $T \cap \Oc_v^\times$ in $k(v)^\times/\ell$.
Thus, the map $T \mapsto T_v$ is a bijection between the collection of subgroups $T$ such that $\Uf_v^1 \leq T \leq \Uf_v$ and subgroups $T_v$ such that $\langle -1, k(v)^{\times \ell}\rangle \leq T_v \leq k(v)^\times$.

For a submodule $\widehat T \leq \widehat K$ such that $\widehat K / \widehat T$ is torsion-free, we similarly denote by $\widehat T_v$ the image of $\widehat T \cap \Uc_v$ in $\widehat{k(v)}$.
One has a bijection between submodules $\widehat T \leq \widehat K$ such that $\Uc_v^1 \leq \widehat T \leq \Uc_v$ and submodules $\widehat T_v \leq \widehat {k(v)} / \tor$. 
In the following lemma, we show that this bijection respects rigidity.
\end{defn}

\begin{lem}
\label{lem:rigid-in-res-field}
Let $(K,v)$ be a valued field and let $T \leq K^\times$ be a subgroup such that $\langle \Uf_v^1,-1 \rangle \leq T \leq \Uf_v$.
Assume furthermore that $v = v_H$ for $H = \Uf_v$; equivalently, $\Gamma_v$ contains no non-trivial $\ell$-divisible convex subgroups.
The following are equivalent:
\begin{enumerate}
\item $T$ is rigid resp. valuative (as a subgroup of $K^\times/\ell$).
\item $T_v$ is rigid resp. valuative (as a subgroup of $k(v)^\times/\ell$).
\end{enumerate}

Similarly, let $\widehat T \leq \widehat K$ be given such that $\Uc_v^1 \leq \widehat T \leq \Uc_v$. Assume furthermore that $v = v_{\widehat H}$ for $\widehat H = \Uc_v$; equivalently, $\Gamma_v$ contains no non-trivial $\ell$-divisible convex subgroups.
The following are equivalent:
\begin{enumerate}
\item $\widehat T$ is rigid resp. valuative (as a submodule of $\widehat K$).
\item $\widehat T_v$ is rigid resp. valuative (as a submodule of $\widehat {k(v)}$).
\end{enumerate}
\end{lem}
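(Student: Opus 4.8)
\emph{Overview and bookkeeping.} The plan is to transport rigidity and valuativity along the reduction map $\Oc_v^\times\twoheadrightarrow k(v)^\times$, whose kernel $1+\mf_v$ lies in $\Uf_v^1\leq T$; write $\tilde T_v\leq k(v)^\times$ for the preimage of $T_v$, so that $\tilde T_v$ is the subgroup generated by $k(v)^{\times\ell}$ together with the reduction of $T\cap\Oc_v^\times$. First I would record two elementary facts. (i) Since $\Uf_v^1\leq T\leq\Uf_v$ one has $T=(T\cap\Oc_v^\times)\cdot\Kl$, and since $\ker(\Oc_v^\times\to k(v)^\times)=1+\mf_v\leq T$, for every $u\in\Oc_v^\times$ one has the equivalence $u\in T\iff\bar u\in\tilde T_v$: the implication $\Leftarrow$ follows by writing $\bar u=\bar t\,\bar s^{\ell}$ with $t\in T\cap\Oc_v^\times$ and $s\in k(v)^\times$, lifting $s$ to $\tilde s\in\Oc_v^\times$, and noting that then $u\,t^{-1}\tilde s^{-\ell}\in 1+\mf_v\leq T$, hence $u\in T$ since $\Kl\leq T$. (ii) Since $-1\in T$ and $1+\mf_v\subseteq T$, the rigidity condition is automatically satisfied at any $x\in K^\times$ with $v(x)\neq 0$: for $v(x)>0$ one has $1-x\in 1+\mf_v\leq T$, while for $v(x)<0$ one has $x^{-1}(1-x)\in -1+\mf_v\leq T$, so in either case $\langle x,1-x\rangle$ is cyclic modulo $T$.

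\emph{Rigid case.} By (ii) it suffices to test the rigidity condition at $x\in\Oc_v^\times\smallsetminus T$. For such $x$ one has $\bar x\neq 1$ (otherwise $x\in 1+\mf_v\leq T$), hence $1-x\in\Oc_v^\times$ with reduction $1-\bar x\neq 0$; by (i), $x\notin T\iff\bar x\notin\tilde T_v$, and surjectivity of $\Oc_v^\times\to k(v)^\times$ shows that every element of $k(v)^\times\smallsetminus\tilde T_v$ arises as such a $\bar x$. Now in the $\F_\ell$-vector spaces $K^\times/T$ and $k(v)^\times/\tilde T_v$, cyclicity of $\langle x,1-x\rangle$, resp. of $\langle\bar x,1-\bar x\rangle$, amounts to the existence of a nontrivial relation $x^a(1-x)^b\in T$, resp. $\bar x^a(1-\bar x)^b\in\tilde T_v$; since $x^a(1-x)^b\in\Oc_v^\times$, fact (i) makes these two relations equivalent. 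Therefore $T$ is rigid if and only if $\tilde T_v$ is rigid, i.e. if and only if $T_v$ is rigid.

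\emph{Valuative case.} Here I would use that a subgroup is valuative precisely when it contains $\Oc_{v'}^\times$ for some valuation $v'$ of the field (Lemma~\ref{lem:valuative}), together with the standard bijection between coarsenings $w$ of $v$ on $K$ and valuations $\bar w$ of $k(v)$, under which $w=v\circ\bar w$ and $\Oc_w^\times=\{u\in\Oc_v^\times:\bar u\in\Oc_{\bar w}^\times\}$. If $T_v$ is valuative, pick $\bar w$ on $k(v)$ with $\Oc_{\bar w}^\times\leq\tilde T_v$ and put $w:=v\circ\bar w$; then $\Oc_w^\times\leq T$ by (i), so $T$ is valuative. Conversely, if $T$ is valuative, Lemma~\ref{lem:v_H} provides the canonical valuation $v_T$ with $\Oc_{v_T}^\times\leq T\leq\Uf_v$; since $\Uf_v$ is valuative (as $\Oc_v^\times\leq\Uf_v$) and $v_{\Uf_v}=v$ by the standing hypothesis, Lemma~\ref{lem:v_H} forces $v$ to be a coarsening of $v_T$, whence $v_T=v\circ\bar w$ for a valuation $\bar w$ of $k(v)$ with $\Oc_{\bar w}^\times\leq\tilde T_v$ (again by (i), via the formula for $\Oc_{v_T}^\times$), so $T_v$ is valuative. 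The only step that is not pure residue-field bookkeeping is this deduction that $v$ coarsens $v_T$, and it is exactly where the hypothesis $v=v_{\Uf_v}$ — equivalently, that $\Gamma_v$ contains no nontrivial $\ell$-divisible convex subgroup — is used; I expect this to be the main point to get right.

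\emph{Pro-$\ell$ case.} This is carried out in exactly the same way, replacing $\Oc_v^\times$, $1+\mf_v$, $k(v)^\times/\ell$ by $\Uc_v$, $\Uc_v^1$, $\widehat{k(v)}/\tor$ and invoking the isomorphism $\Uc_v/\Uc_v^1\cong\widehat{k(v)}/\tor$ from Definition~\ref{defn:Ufs}. The torsion-freeness of $\widehat K/\widehat T$ and of $\widehat K/\Uc_v^1$ now plays the role that $\Kl\leq T$ played above: it yields the analogue of equivalence (i) and makes ``$\Z_\ell$-cyclic'' equivalent to the existence of a nontrivial $\Z_\ell$-dependence relation, which descends along the reduction just as in (i), so the same case analysis on $v(x)$ and the same valuation-theoretic argument apply verbatim.
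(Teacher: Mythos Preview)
Your argument is correct and follows essentially the same approach as the paper's proof: the rigid case is handled by the same case split on $v(x)$ together with the transport of cyclicity along the residue map, and the valuative case is handled via valuation-theoretic composition. Your write-up is in fact more careful than the paper's, which dismisses the valuative equivalence as ``immediate from the definitions''; you correctly isolate the one nontrivial step, namely that when $T$ is valuative the associated valuation $v_T$ must be a refinement of $v$, and that this is exactly where the hypothesis $v=v_{\Uf_v}$ enters (via Lemma~\ref{lem:v_H}).

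One terminological slip to fix: the standard bijection you invoke is between \emph{refinements} of $v$ on $K$ and valuations $\bar w$ of $k(v)$, not coarsenings; your formula $\Oc_w^\times=\{u\in\Oc_v^\times:\bar u\in\Oc_{\bar w}^\times\}$ already exhibits $w$ as a refinement of $v$, and your subsequent use of the bijection (writing $v_T=v\circ\bar w$ once you know $v$ coarsens $v_T$) is consistent with this. Simply replace ``coarsenings $w$ of $v$'' by ``refinements $w$ of $v$'' in that sentence.
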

\begin{proof}
We prove the pro-$\ell$ case as the mod-$\ell$ case is similar.
The fact that $\widehat T_v$ is valuative if and only if $\widehat T$ is valuative follows immediately from the definitions by taking the valuation-theoretic composition of the corresponding valuations.
Assume then that $\widehat T$ is rigid.
Take $\bar x \in k(v)^\times \smallsetminus k(v)^\times \cap \widehat T_v$ and pick a representative $x \in \Oc_v^\times$ for $\bar x$.
Observe that $x \notin \widehat T$ and $v(1-x) = 0$ for otherwise $x \in \widehat T$.
As $\widehat T$ is rigid, $\langle 1-x,x \rangle\mod \widehat T$ is cyclic in $\Uc_v / \widehat T$.
Thus, the same is true for $\langle 1-\bar x, \bar x \rangle\mod\widehat T_v$ in $\widehat k(v) / \widehat T_v$. 

Conversely, assume that $\widehat T_v$ is rigid.
Take $x \in K^\times \smallsetminus K^\times \cap \widehat T$.
If $v(x) \neq 0$, $1-x = 1 \mod \widehat T$ or $1-x = x \mod \widehat T$ since $\Uc_v^1 \leq \widehat T$.
On the other hand, if $v(x) = 0$, consider $\bar x$ the image of $x$ in $k(v)$.
Then $\langle \bar x, (1-\bar x) \rangle \mod \widehat T_v$ is cyclic so that the same is true for $\langle x, (1-x) \rangle \mod \widehat T$ since, again, $\Uc_v^1 \leq \widehat T$.
\end{proof}

\begin{defn}
\label{defn:Vc-and-Tc}
Let $K$ be a field.
We denote by $\Vc_K$ the collection of all (possibly trivial) valuations $v$ of $K$ such that:
\begin{itemize}
\item $\Gamma_v$ has no non-trivial $\ell$-divisible convex subgroups.
\item $k(v)^\times/\langle k(v)^{\times \ell}, -1 \rangle$ is non-cyclic.
\item $\Hb( \langle k(v)^{\times \ell}, -1 \rangle) = k(v)^\times$.
\end{itemize}
Moreover, we denote by $\Tc_K$ the collection of subgroups $T \leq K^\times$ such that:
\begin{itemize}
\item $\Kl \leq T$ and $-1 \in T$.
\item $T \leq \Hb(T)$ is minimal; i.e. whenever $\langle \Kl, -1 \rangle \leq T' \leq T$ and $T \neq T'$, one has $\Hb(T') \not\leq \Hb(T)$.
This implies in particular that $T = \langle \Kl, -1 \rangle$ or $\Hb(T) \neq K^\times$.
\item $T \neq \Hb(T)$.
\end{itemize}
For $v \in \Vc_K$, denote by $\Vc_K^v$ the subset of $\Vc_K$ consisting only of valuations which are finer than $v$.
Similarly, denote by $\Tc_K^v$ the subset of $\Tc_K$ consisting only of subgroups $T$ such that $\Uf_v^1 \leq T \leq \Hb(T) \leq \Uf_v$.

Denote by $\widehat \Vc_K$ the collection of all (possibly trivial) valuations $v$ of $K$ such that:
\begin{itemize}
\item $\Gamma_v$ has no non-trivial $\ell$-divisible convex subgroups.
\item $\widehat{k(v)}/\tor$ is non-cyclic.
\item $\Hb(\tor(\widehat{k(v)})) = \widehat{k(v)}$.
\end{itemize}
Moreover, we denote by $\widehat\Tc_K$ the collection of subgroups $\widehat T \leq \widehat K$ such that:
\begin{itemize}
\item $\widehat K / \widehat T$ is torsion-free.
\item $\widehat T \leq \Hb(\widehat T)$ is minimal; i.e. whenever $\widehat T' \leq \widehat T$, $\widehat K / \widehat T'$ is torsion-free and $\widehat T' \neq \widehat T$, one has $\Hb(\widehat T') \not\leq \Hb(\widehat T)$.
This implies in particular that $\widehat T = \tor(\widehat K)$ or $\Hb(\widehat T) \neq \widehat K$.
\item $\widehat T \neq \Hb(\widehat T)$.
\end{itemize}
For $v \in \widehat \Vc_K$, denote by $\widehat \Vc_K^v$ the subset of $\widehat \Vc_K$ consisting only of valuations which are finer than $v$.
Similarly, denote by $\widehat \Tc_K^v$ the subset of $\widehat \Tc_K$ consisting only of subgroups $\widehat T$ such that $\Uc_v^1 \leq \widehat T \leq \Hb(\widehat T) \leq \Uc_v$.
\end{defn}

\begin{remark}
One should note that $\Vc_K = \widehat \Vc_K$ provided that $\Char K \neq \ell$ and $\mu_{\ell^\infty} \subset K$.
Indeed, for such fields $K^\times/\ell$ is cyclic if and only if $\widehat K$ is cyclic, $-1 \in K^{\times \ell}$ and $\widehat K$ is torsion-free and the same is true replacing $K$ with $k(v)$ for all valuations $v$ of $K$.
Recall that $\widehat K / \ell = K^\times/\ell$.
If we denote by ${\bf1} \leq \widehat K$ the trivial submodule, then $K^\times \cap (\Hb({\bf 1}) \cdot \widehat K^{\ell})$ contains $\Hb(K^{\times \ell})$.
Thus, $\Hb(K^{\times \ell}) = K^\times$ implies that $\Hb({\bf 1}) = \widehat K$ since $\widehat K / \Hb({\bf 1})$ is torsion-free.

Conversely, we note that for all $x,y \in \widehat K_1^M(K)$, $\{x,y\}=0$ or $\{x,y\}$ is non-torsion (see, e.g. Proposition \ref{prop:central-kernel-pairig-with-H-2} and the remark in the proof of the pro-$\ell$ case of Lemma \ref{lem:cup-product}).
The same is true replacing $K$ with $k(v)$ for all valuations $v$ of $K$; this can be immediately deduced as $\widehat K_*^M(k(v))$ embeds into $\widehat K_*^M(K)/\Uc_v^1$ as the subring generated by products from $\widehat{k(v)} = \Uc_v/\Uc_v^1 \leq \widehat K/\Uc_v^1$.
Thus, $\Ib({\bf 1}) = \widehat K$ implies that for all $x \in \widehat K \smallsetminus \widehat K^\ell$, there exists $z \in \widehat K \smallsetminus \widehat K^\ell$ such that $x,z$ are $\Z_\ell$-independent and $\{x,z\}=0$.
We deduce that $\{x \mod \widehat K^\ell,z\mod\widehat K^\ell\} = 0$ as well, and thus $\Ib(K^{\times \ell}) = K^\times$ (see Remark \ref{rem:ZofT}).
\end{remark}

\begin{example}
\label{example:div-vals}
Let $K$ be a function field of transcendence degree $\geq 2$ over a field $k$ such that $k^\times = k^{\times \ell}$ (we make no assumptions on $\Char k$) and $k$ is relatively algebraically closed in $K$.
The {\bf prime divisors} of $K|k$ are valuations $v$ of $K$ which correspond to some Weil prime divisor on some model $X \rightarrow \Spec k$ of $K|k$.

In fact, prime divisors of $K|k$ are contained in $\Vc_K$ and in $\widehat \Vc_K$.
Let $v$ denote a prime divisor of $K|k$.
First note that $\Gamma_v \cong \Z$ so that $v$ satisfies the first property required by $\Vc_K$ resp. $\widehat \Vc_K$.
Moreover, one has $\td(k(v)|k) = \td(K|k) - 1 \geq 1$ and so $\dim_{\Z/\ell}(k(v)^\times/\ell) \geq 2$; since $K$ contains the $\ell$-closed subfield $k$, we deduce that $v$ satisfies the second condition of $\Vc_K$ and $\widehat \Vc_K$.

To simplify the notation, we denote by $F = k(v)$.
Let us now show the last condition which ensures $v \in \Vc_K$ resp. $\widehat \Vc_K$.
Let $x \in F^\times \smallsetminus F^{\times \ell}$ be given; in particular, $x$ is transcendental over $k$.
Denote by $M$ the relative algebraic closure of $k(x)$ inside $F$.
Since $M$ is a function field of transcendence degree 1 over $k$, there exists $z \in k(x)$ such that the images of $x$ and $z$ in $M^\times/\ell$ are $(\Z/\ell)$-independent.

Such a $z$ exists as follows.
First note that $x \notin F^{\times \ell}$ implies that $x$ represents a non-trivial element of $M^\times/\ell$.
Denote by $\Cc$ the unique complete normal model of $M|k$ and consider the map $\Cc \rightarrow \Pbb_k^1$ which is induced by the inclusion $k(x) \rightarrow M$.
By the approximation theorem, there exists a prime divisor $v$ of $\Pbb^1_k$ and a function $z \in k(x)$ such that $v \neq v_0, v_\infty$, $v(z) = 1$ and $v$ is unramified in the cover $\Cc \rightarrow \Pbb_k^1$; here $v_0$ resp. $v_\infty$ denotes the prime divisor associated to $0 \in \Pbb_M^1(k)$ resp. $\infty \in \Pbb_M^1(k)$.
Since $v$ is unramified, for any prime divisor $w$ of $\Cc$ which prolongs $v$ one has $w(z) = 1$ and thus $z \notin M^{\times \ell}$.
Moreover, as $v \neq v_0,v_\infty$ and the divisor associated to $x$ is precisely $v_0 - v_\infty$, we deduce that the images of $x$ and $z$ are independent in $\Div(\Cc)/\ell$.
In particular, $z,x$ are independent in $M^\times/\ell$.

Thus, the images of $x,z$ in $F^\times/\ell$ and $\widehat F$ are also independent since $M$ is relatively algebraically closed in $F$.
In particular, $\langle x,z \rangle$ also non-cyclic in $\widehat F$.
On the other hand, a classical theorem of Milnor states that one has a short exact sequence:
\[ 0 \rightarrow K_2^M(k) \rightarrow K_2^M(k(x)) \rightarrow \bigoplus_w K_1^M(k(w)) \rightarrow 0 \]
where $w$ varies over all the prime divisors of $k(x)|k$ which correspond to closed points of $\Abb_k^1$ and the rightmost map is the sum of the corresponding tame symbols.
Applying the (right exact) functor $(\bullet)\otimes_\Z \Z/\ell$ resp. $\widehat\bullet$ to this short exact sequence, we deduce that $K_2^M(k(x))/\ell = 0$ resp. $\widehat K_2^M(k(x)) = 0$ since $k(w)^\times = k(w)^{\times \ell}$ for all such $w$.
In particular, $\{x,z\} = 0 \in K_2^M(F)/\ell$ and $\{x,z\} = 0 \in \widehat K_2^M(F)$.
From this, we deduce the third condition required by $\Vc_K$ resp. $\widehat \Vc_K$ using Remark \ref{rem:ZofT}.
\end{example}

For $v \in \Vc_K$ one has a canonical map $\Vc_{k(v)} \rightarrow \Vc_K$ defined by $w \mapsto w \circ v$, where $\circ$ denotes the valuation-theoretic composition; this map is injective and its image is precisely $\Vc_K^v$.
One also has a canonical map $\Tc_{k(v)} \rightarrow \Tc_{K}$ defined by sending $T' \in \Tc_{k(v)}$ to the unique $T$ such that $\Uf_v^1 \leq T \leq \Uf_v$ and $T_v = T'$ (see definition \ref{defn:Ufs}); again, this map is injective with image $\Tc_K^v$.
For $v \in \widehat \Vc_K$ one has canonical maps $\widehat\Vc_{k(v)} \rightarrow \widehat\Vc_K$ and $\widehat\Tc_{k(v)} \rightarrow \widehat\Tc_{K}$ which are injective and whose image is precisely $\widehat\Vc_K^v$ resp. $\widehat\Tc_K^v$; these maps are defined in the obvious analogous way.

\begin{thm}
\label{thm:main-kthy-compatibility}
Let $K$ be an arbitrary field.
For any $v \in \Vc_K$, one has $\Uf_v^1 \in \Tc_K$.
The map $\Vc_K \rightarrow \Tc_K$ defined by $v \mapsto \Uf_v^1$ is a bijection.
Let $v \in \Vc_K$ be given.
Then the map $\Vc_K^v \rightarrow \Tc_K^v$ induced by $\Vc_K \rightarrow \Tc_K$ is a bijection.
Moreover, this bijection is compatible with the bijection $\Vc_{k(v)} \rightarrow \Tc_{k(v)}$ in the sense that the following diagram (of bijections) commutes:
\[
\xymatrix{
\Vc_K^v \ar[r]  & \Tc_K^v \\
\Vc_{k(v)} \ar[r] \ar[u] & \Tc_{k(v)} \ar[u]
}
\]

For any $v \in \widehat\Vc_K$, one has $\Uc_v^1 \in \widehat\Tc_K$.
The map $\widehat\Vc_K \rightarrow \widehat\Tc_K$ defined by $v \mapsto \Uc_v^1$ is a bijection.
Let $v \in \widehat\Vc_K$ be given.
Then the map $\widehat\Vc_K^v \rightarrow \widehat\Tc_K^v$ induced by $\widehat\Vc_K \rightarrow \widehat\Tc_K$ is a bijection.
Moreover, this bijection is compatible with the bijection $\widehat\Vc_{k(v)} \rightarrow \widehat\Tc_{k(v)}$ in the sense that the following diagram (of bijections) commutes:
\[
\xymatrix{
\widehat\Vc_K^v \ar[r]  & \widehat\Tc_K^v \\
\widehat\Vc_{k(v)} \ar[r] \ar[u] & \widehat\Tc_{k(v)} \ar[u]
}
\]

\end{thm}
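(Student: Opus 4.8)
The plan is to write the inverse of $v\mapsto\Uf_v^1$ explicitly as $T\mapsto v_{\Hb(T)}$, to check that both assignments are well defined and mutually inverse, and then to deduce the relative statement and the commuting square from the valuation-theoretic composition $\Vc_{k(v)}\to\Vc_K$, $w\mapsto w\circ v$. The whole argument rests on two earlier inputs: Propositions \ref{prop:rigid-kthy} and \ref{prop:main-kthy}, which attach to a non-rigid $T$ the valuation $v=v_{\Hb(T)}$ with $1+\mf_v\le T$ and $T\cdot\Oc_v^\times=\Hb(T)$; and Lemma \ref{lem:rigid-in-res-field}, which transports rigidity and valuativity between $(K,v)$ and $k(v)$ along the bijection $T\leftrightarrow T_v$ of Definition \ref{defn:Ufs}.

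The heart of the matter is a compatibility of the operator $\Hb(\bullet)$ with passage to the residue field: if $\Gamma_v$ has no non-trivial $\ell$-divisible convex subgroup (so $v=v_{\Uf_v}$) and $\Uf_v^1\le T\le\Uf_v$, then $\Hb(T)\le\Uf_v$ and $(\Hb(T))_v=\Hb(T_v)$, with the convention that $\Hb(T_v)=k(v)^\times$ is read back as $\Uf_v$. For "$\Hb(T)\le\Uf_v$" I would check that $\Uf_v$ itself has the separating property that characterises $\Hb(T)$ in Remark \ref{rem:ZofT}, using the standard structure of $K_2^M(K)/T$ for a valued field: a symbol $\{x,y\}_T$ with $x\notin\Uf_v$ (so $v(x)\notin\ell\Gamma_v$) and $y\notin T$ is detected by its residue symbol in $K_2^M(k(v))/\ell$, by its mixed part in $(k(v)^\times/\ell)\otimes(\Gamma_v/\ell)$, or by its top part in $\wedge^2(\Gamma_v/\ell)$, each of which one checks to be non-zero under the stated hypotheses (here one uses $\{-1,z\}_T=0$ since $-1\in T$, and that $K_2^M(K)/T$ is killed by $\ell$). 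Once both sides of $(\Hb(T))_v=\Hb(T_v)$ are known to lie between $\Uf_v^1$ and $\Uf_v$, Lemma \ref{lem:rigid-in-res-field} (a subgroup in that range is rigid iff its residue subgroup is) together with the characterisation of $\Hb$ via rigidity (Remark \ref{rem:ZofT}) gives the equality, the genuinely valuative competitors in the intersection defining $\Hb$ being disposed of by Lemma \ref{lem:non-valuative-comparable}. Specialising to $T=\Uf_v^1$ and invoking the third defining condition of $\Vc_K$ ($\Hb(\langle k(v)^{\times\ell},-1\rangle)=k(v)^\times$) yields $\Hb(\Uf_v^1)=\Uf_v$, while the second ($k(v)^\times/\langle k(v)^{\times\ell},-1\rangle$ non-cyclic) gives $\Uf_v^1\ne\Hb(\Uf_v^1)$.

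The rest is bookkeeping. To see $\Uf_v^1\in\Tc_K$ for $v\in\Vc_K$, the only clause needing work is minimality, i.e.\ $\Hb(T')\not\le\Hb(\Uf_v^1)=\Uf_v$ for $\langle\Kl,-1\rangle\le T'\lneq\Uf_v^1$: such a $T'$ cannot be rigid (otherwise $\Uf_v^1\supseteq T'$ would be rigid by Lemma \ref{lem:rigid-valuative-properties}(1), contradicting $\Hb(\Uf_v^1)=\Uf_v\ne\Uf_v^1$), so $\Hb(T')$ is valuative by Proposition \ref{prop:main-kthy}, and if $\Hb(T')\le\Uf_v$ then $v$ is a coarsening of $v_{\Hb(T')}$ by Lemma \ref{lem:v_H}, whence $1+\mf_v\subseteq 1+\mf_{v_{\Hb(T')}}\le T'$ and $\Uf_v^1\le T'$ --- absurd. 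Conversely, for $T\in\Tc_K$, Proposition \ref{prop:main-kthy} gives $v:=v_{\Hb(T)}$ with $\Uf_v^1\le T$ and $\Uf_v\le\Hb(T)=T\cdot\Oc_v^\times$; applying minimality of $T$ to $T\cap\Uf_v$ together with the compatibility above forces $T\le\Uf_v$, hence $\Hb(T)=\Uf_v$, and then the three conditions for $v\in\Vc_K$ are read off from $T\ne\Hb(T)$ and the compatibility statement (applied at $\Uf_v^1$ and at $T$, the failure of a residue-field condition being propagated back from $k(v)$ through a refinement $w=\bar w\circ v$ of $v$ and the minimality of $T$). The two round trips are then immediate: $v_{\Hb(\Uf_v^1)}=v_{\Uf_v}=v$, and $\Uf_{v_{\Hb(T)}}^1=T$ because a strict inclusion $\Uf_{v_{\Hb(T)}}^1\lneq T$ would contradict minimality of $T$ (its $\Hb$ being $\Uf_v\le\Hb(T)$). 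Finally $w\mapsto w\circ v$ identifies $\Vc_{k(v)}$ with $\Vc_K^v$ and $\Tc_{k(v)}$ with $\Tc_K^v$, and matches $\Uf^1$ across residue fields ($k(w\circ v)=k(w)$ and $\Uf_{w\circ v}^1\leftrightarrow\Uf_w^1$ under $T\leftrightarrow T_v$), which is exactly the commuting square; the pro-$\ell$ statements are proved in parallel, with $\widehat K,\Uc_v^1,\Uc_v,\widehat K_*^M$ in place of $K^\times/\ell,\Uf_v^1,\Uf_v,K_*^M(\bullet)/\ell$, "torsion-free cyclic" in place of "cyclic", and the torsion-free hypotheses used throughout.

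I expect the main obstacle to be the residue-field compatibility of $\Hb(\bullet)$: precisely because $\Hb$ is built from all subgroups of $K^\times$ with cyclic quotient, one must argue that only those adapted to $v$ (between $\Uf_v^1$ and $\Uf_v$, together with the honestly valuative ones controlled by Lemma \ref{lem:non-valuative-comparable}) contribute, and it is here that the structure theory of $K_2^M$ of a valued field does the real work. A secondary, purely clerical nuisance is the $\ell=2$ case, where $\langle k(v)^{\times\ell},-1\rangle$ and the isomorphism $\Uf_v/\Uf_v^1\cong k(v)^\times/\langle k(v)^{\times\ell},-1\rangle$ replace the cleaner $\ell\ne2$ formulas, exactly as in Definition \ref{defn:Ufs}.
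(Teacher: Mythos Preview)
Your approach is essentially the paper's, made more explicit. The paper's proof hinges on exactly the two pillars you identify --- Proposition~\ref{prop:main-kthy} for producing the valuation attached to $T$, and Lemma~\ref{lem:rigid-in-res-field} for transporting rigidity across the residue map --- and its surjectivity step (``the minimality condition of $T\le\Hb(T)$ ensures that in fact $\Uf_v^1=T$ and $\Uf_v=H$'') is precisely your minimality argument. What you add, and correctly flag as the crux, is the explicit residue compatibility $\Hb(T)\le\Uf_v$ and $(\Hb(T))_v=\Hb(T_v)$ for $\Uf_v^1\le T\le\Uf_v$; the paper leaves this implicit in its citation of Lemma~\ref{lem:rigid-in-res-field} ``and/or'' Theorem~\ref{thm:rigid-to-valuative}, whereas you spell out the Milnor $K_2$ decomposition that actually proves it. Your sketch of that decomposition --- that the only non-vacuous Steinberg relations in $K_2^M(K)/T$ come from $v$-units, so that $K_2^M(K)/T$ splits as the residue-field part plus $(k(v)^\times/T_v)\otimes(\Gamma_v/\ell)$ plus $\wedge^2(\Gamma_v/\ell)$ --- is exactly right, and is in fact a mild extension of the case analysis already present in the proof of Lemma~\ref{lem:rigid-in-res-field}.

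One small economy: in the surjectivity step you need not pass through $T\cap\Uf_v$. Once you have $\Hb(\Uf_v^1)\le\Uf_v$ (which, as you note, needs only the first defining condition of $\Vc_K$, and that holds automatically for $v=v_{\Hb(T)}$ by Definition~\ref{defn:v_H}), you get $\Hb(\Uf_v^1)\le\Uf_v\le\Hb(T)$ directly, and minimality of $T$ applied to $T'=\Uf_v^1\le T$ yields $\Uf_v^1=T$ in one stroke; the remaining conditions for $v\in\Vc_K$ then follow as you indicate (condition~2 because a cyclic quotient would make $\Uf_v^1$ automatically rigid, contradicting $\Hb(\Uf_v^1)=\Uf_v\ne\Uf_v^1$; condition~3 from your compatibility). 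This is how the paper argues, in one terse sentence.
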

\begin{proof}
Let $v \in \Vc_K$ be given.
By Proposition \ref{prop:main-kthy} and Lemma \ref{lem:rigid-in-res-field} and/or Theorem \ref{thm:rigid-to-valuative}, along with the properties of the valuations in $\Vc_K$, we deduce that $\Hb(\Uf_v^1) = \Uf_v$.
The conditions on $v \in \Vc_K$ ensure that $v = v_H$ where $H = \Uf_v$.
Suppose that $\langle \Kl, -1 \rangle \leq T \leq \Uf_v^1$ and $T \neq \Uf_v^1$.
We deduce from Proposition \ref{prop:main-kthy} that $\Hb(T) =: H'$ is valuative and, setting $v' = v_{H'}$, we have $1+\mf_{v'} \subset T$.
Suppose for a contradiction that $H' = \Hb(T) \leq H$.
Then $v$ is a coarsening of $v'$ and so $1+\mf_v \leq 1+\mf_{v'} \leq T' \leq \Uf_v^1$.
But this contradicts the fact that $T' \neq \Uf_v^1$.
Thus, $\Uf_v^1 \leq \Hb(\Uf_v^1) = \Uf_v$ satisfy the minimality condition required by $\Tc_K$.
This shows that the map $\Vc_K \rightarrow \Tc_K$ is well defined and the fact that $v = v_H$ for $H = \Uf_v = \Hb(\Uf_v^1)$ implies that this map is injective.

On the other hand, suppose $T \in \Tc_K$ and recall that $T \leq \Hb(T) =: H$, $T \neq H$.
By Proposition \ref{prop:main-kthy}, $H$ is valuative and, setting $v = v_H$, one has $1+\mf_v \subset T$.
In particular, $\Uf_v^1 \subset T$ and $\Uf_v \subset H$.
But the minimality condition of $T \leq \Hb(T)$ ensures that in fact $\Uf_v^1 = T$ and $\Uf_v = H$.
This shows that the map $\Vc_K \rightarrow \Tc_K$ is surjective, as required.

As for the compatibility in residue fields let $v \in \Vc_K$ be given and observe that $\Uf_v^1 \leq T \leq \Hb(\Uf_v^1) = \Uf_v$ implies that $\Uf_v^1 \leq T \leq \Hb(T) \leq \Uf_v^1$ by Lemma \ref{lem:rigid-in-res-field}.
Take $w \in \Vc_K$ a coarsening of $v$.
Denote by $v/w$ the valuation induced on $k(w)$ by $v$.
Then one has a canonical short exact sequence:
\[ 0 \rightarrow \Gamma_{v/w} \rightarrow \Gamma_v \rightarrow \Gamma_w \rightarrow 0. \]
As $\Gamma_v$ contains no non-trivial $\ell$-divisible convex subgroups, the same is true for $\Gamma_{v/w}$.
Moreover, $k(v) = k(v/w)$.
Lastly, Lemma \ref{lem:rigid-in-res-field} ensures the final condition of Definition \ref{defn:Vc-and-Tc} for $v/w$ to be in $\Vc_{k(w)}$ and the rest is easy.
The pro-$\ell$ case is virtually identical.
\end{proof}

\begin{remark}
\label{rem:T-euqals-Z-of-T}
Note that when $(K^\times/\ell)/\langle -1 \rangle$ resp. $\widehat K^\times / \tor$ is cyclic, one cannot deduce anything about the units of the valuation $v$ as above.
Consider, for example, $K = \overline \F_p^{\Z_\ell}$ where $\Z_\ell$ is considered as the $\ell$-Sylow subgroup of $G_{\F_p} = \widehat \Z = \prod_q \Z_q$.
The field $K$ does not possess any non-trivial valuations and $K^\times/\ell$ resp. $\widehat K$ is cyclic.
In particular, the only valuative subgroup of $K^\times/\ell$ resp. $\widehat K$ is precisely $K^\times/\ell$ resp. $\widehat K$.
On the other hand, consider $K = \C((t))$.
It is well known that, again, $G_K = \widehat \Z$ and thus $K^\times/\ell$ resp. $\widehat K$ is cyclic, generated by $t$.
However, the trivial subgroup of $K^\times/\ell$ resp. $\widehat K$ is valuative and the corresponding valuation is precisely the $t$-adic valuation.

On the other hand, arguing in a similar way as in Theorem \ref{thm:main-kthy-compatibility}, we can say a bit more.
Consider a subgroup $\Kl \leq T \leq K^\times$ such that $-1 \in T = \Hb(T)$ and $T \leq \Hb(T)$ is minimal in the same sense as Definition \ref{defn:Vc-and-Tc}.
Then there exists a valuation $v$ of $K$ such that $\Gamma_v$ contains no non-trivial $\ell$-divisible convex subgroups, $\dim_{\Z/\ell}((k(v)^\times/\ell)/\langle -1 \rangle) \leq 1$ and $T = \Uf_v^1$.
Unfortunately, one cannot detect $\Uf_v$ precisely using the K-theoretic method above since $\Uf_v/\Uf_v^1 \cong (k(v)^\times/\ell)/\langle -1 \rangle$ is cyclic.
However one can say that $\Uf_v$ is precisely the {\bf minimal valuative} subgroup of $K^\times$ which contains $\Uf_v^1$.

Similarly one can consider $\widehat T \leq \widehat K$ such that $\widehat K/\widehat T$ is torsion-free and $\widehat T = \Hb(\widehat T)$ is minimal in the sense of Definition \ref{defn:Vc-and-Tc}.
Then there exists a valuation $v$ of $K$ such that $\Gamma_v$ contains no non-trivial $\ell$-divisible convex subgroups, $\widehat{k(v)} / \tor$ is cyclic and $\widehat T = \Uc_v^1$.
As above, unfortunately, one cannot detect $\Uc_v$ precisely using the $\ell$-adically complete K-theoretic method above since $\widehat{k(v)} / \tor$ is cyclic.
\end{remark}

\section{Almost Commuting Liftable Subgroups of Galois Groups}
\label{sec:ACL-groups}

Throughout this section, unless otherwise noted, $K$ will denote a field such that $\Char K \neq \ell$ and $\mu_{\ell} \subset K$.
Throughout, we fix once and for all an isomorphism of $G_K$-modules $\mu_\ell \cong \Z/\ell$, and we denote by $\omega = \omega_\ell$ the element of $\mu_\ell$ which corresponds to $1 \in \Z/\ell$ under the isomorphism $\Z/\ell \cong \mu_\ell$.
When we make the assumption that $\mu_{\ell^\infty} \subset K$, we implicitly also fix, in this case, an isomorphism of $G_K$-modules $\lim_m \mu_{\ell^m} \cong \Z_\ell$ obtained by choosing a compatible system $(\omega_{\ell^m})_m$ with $\omega_m$ a generator of $\mu_{\ell^m}$.

We denote $K' = K(\sqrt[\ell]{K^\times})$ and $\Gc_K^a := \Gal(K'|K)$ (this notation is compatible with that which is used throughout the paper).
Recall that the Kummer pairing $K^\times/\ell \times \Gc_K^a \rightarrow \mu_\ell$ is defined by $(x,\sigma) \mapsto \sigma \sqrt[\ell]{x}/\sqrt[\ell]{x}$.
This is a perfect pairing which induces a bijection between subgroups $T$ such that $\Kl \leq T \leq K^\times$ and subgroups $A \leq \Gc_K^a$.
For a subgroup $T$ as above, we denote by $T^\perp = \Gal(K'|K(\sqrt[\ell]{T}))$; this is precisely the orthogonal of $T$ with respect to the pairing above.
Similarly, given $A \leq \Gc_K^a$, we denote by $A^\perp = ((K')^A)^{\times \ell} \cap K^\times$; this is precisely the orthogonal of $A$ with respect to the pairing above, and is the unique subgroup $T$ such that $\Kl \leq T \leq K^\times$ and $A = \Gal(K'|K(\sqrt[\ell]{T}))$.

For a field $K$ such that $\mu_{\ell^\infty} \subset K$, we denote by $K^{ab} = K(\sqrt[\ell^\infty]{K})$ and $\Pi_K^a = \Gal(K^{ab}|K)$ the maximal pro-$\ell$ abelian Galois group of $K$ (this notation is compatible with that which is used throughout the paper).
Consider $\Z_\ell(1) = \lim_{n} \mu_{\ell^n}$; one has a canonical perfect Kummer pairing $\widehat K \times \Pi_K^a \rightarrow \Z_\ell(1)$ defined by taking the limit of the corresponding Kummer pairings $K^\times/\ell^n \times \Pi_K^a / \ell^n \rightarrow \mu_{\ell^n}$, $(x,\sigma) \mapsto \sigma \sqrt[\ell^n]{x}/\sqrt[\ell^n]{x}$, for all $n$.
For $\widehat A \leq \Pi_K^a$ resp. $\widehat T \leq \widehat K$ we denote by $\widehat A^\perp \leq \widehat K$ resp. $\widehat T^\perp \leq \Pi_K^a$ the corresponding Kummer orthogonals with respect to this pairing.
This induces a 1-1 correspondence between subgroups $\widehat A \leq \Pi_K^a$ such that $\Pi_K^a / \widehat A$ is torsion-free and submodules $\widehat T \leq \widehat K$ such that $\widehat K/\widehat T$ is torsion-free.

The main results of \S~\ref{sec:rigid-elements} allow one to detect the valuations $v$ of $K$ using the Milnor-K-theory of the field.
In the mod-$\ell$ case, one uses the information encoded in $K_1^M(K)/\ell$, $K_2^M(K)/\ell$ and the product $(K_1^M(K)/\ell)^{\otimes 2} \rightarrow K_2^M(K)/\ell$.
In the pro-$\ell$ case, one uses the information encoded in $\widehat K_1^M(K)$, $\widehat K_2^M(K)$ and the product $(\widehat K_1^M(K))^{ \otimes 2} \rightarrow \widehat K_2^M(K)$.

If the field $K$ has characteristic different from $\ell$, the Merkurjev-Suslin theorem shows that this information is already encoded in $H^1(K,\mu_\ell)$, $H^2(K,\mu_\ell^{\otimes 2})$ and the cup product $(H^1(K,\mu_\ell))^{\otimes 2}\rightarrow H^2(K,\mu_\ell^{\otimes 2})$ for the mod-$\ell$ case; for the pro-$\ell$ case one can use $H^1(K,\Z_\ell(1))$, $H^2(K,\Z_\ell(2))$ and the cup product $(H^1(K,\Z_\ell(1)))^{\otimes 2}\rightarrow H^2(K,\Z_\ell(2))$.
Another consequence of the Merkurjev-Suslin theorem shows that this datum can be deduced from the cohomology of small pro-$\ell$ Galois groups ($\Gc_K^c$ resp. $\Pi_K^c$) over $K$ in the presence of enough roots of unity (see, e.g., \cite{Chebolu2009} for the explicit details).
So, in some sense, we've already produced group theoretical recipes to detect valuations from small Galois groups.
In the context of producing precise recipes for the local theory in birational anabelian-geometry, this is somewhat unsatisfactory.
Thus, in this section, we translate the results of \S~\ref{sec:rigid-elements} into {\bf explicit conditions using the group-theoretical structure} of $\Gc_K^c$ resp. $\Pi_K^c$ instead of the structure of its cohomology; these conditions are inspired by Bogomolov and Tschinkel's theory of commuting-liftalble pairs \cite{Bogomolov2007}.

\subsection{Hilbert Decomposition Theory}
\label{sec:hilb-decomp-theory}

Let $(K,v)$ be a valued field such that $\Char K \neq \ell$ and $\mu_\ell \subset K$.
Pick a prolongation $v'$ of $v$ to $K' = K(\sqrt[\ell]{K})$.
We denote by $D_v = D_{v'|v}$ resp. $I_v = I_{v'|v}$ the decomposition and inertia subgroups of $v'|v$ inside $\Gc_K^a = \Gal(K'|K)$.
Note that as $\Gc_K^a$ is abelian, the subgroups $I_v \leq D_v$ are independent of choice of prolongation $v'$.

Suppose furthermore $\mu_{\ell^\infty} \subset K$.
Pick a prolongation $v^{ab}$ of $v$ to $K^{ab} = K(\sqrt[\ell^\infty]{K})$.
Denote by $\widehat D_v = D_{v^{ab}|v}$ and $\widehat I_v = I_{v^{ab}|v}$ the decomposition and inertia subgroups of $v^{ab}|v$ inside $\Pi_K^a = \Gal(K^{ab}|K)$.
Again, since $\Pi_K^a$ is abelian, these are independent of choice of prolongation $v^{ab}|v$.

In fact, if $\Char k(v) \neq \ell$, we can explicitly describe these subgroups via the Kummer pairing $K^\times/\ell \times \Gc_K^a \rightarrow \mu_\ell$ resp. $\widehat K \times \Pi_K^a \rightarrow \Z_\ell(1)$ (Proposition \ref{prop:decomp-thy-prop}).
Before we prove this proposition, we first review some basic facts from Hilbert decomposition theory for valued field $(K,v)$ such that $\Char k(v) \neq \ell$ and $\mu_\ell \subset K$ resp. $\mu_{\ell^\infty} \subset K$.

Assume, then, that $\Char k(v) \neq \ell$ and let $L|K$ be an arbitrary pro-$\ell$ Galois extension ($K \subset L \subset K(\ell)$) and pick a prolongation $w$ of $v$ to $L$.
We denote by $I_{w|v}$ resp. $D_{w|v}$ the inertia resp. decomposition subgroups of $w|v$ in $\Gal(L|K)$.
One has a canonical short exact sequence:
\[ 1 \rightarrow I_{w|v} \rightarrow D_{w|v} \rightarrow \Gal(k(w)|k(v)) \rightarrow 1; \]
recall that this short exact sequence is split if $L = K(\ell)$ and that $k(w) = k(v)(\ell)$ in this case.
Moreover, we have a perfect pairing which is compatible with the action of $\Gal(k(w)|k(v))$ on $I_{w|v}$:
\[ I_{w|v} \times (\Gamma_w/\Gamma_v) \rightarrow \mu_{\ell^\infty}(k(w)) = \mu_{\ell^\infty} \cap k(v)^\times \]
defined by $(\sigma,w(x)) \mapsto \overline{\sigma x/x}$ where $y \mapsto \bar y$ is the canonical map $\Oc_w^\times \twoheadrightarrow k(w)^\times$.
To simplify the notation, we denote by $\mu_{\ell^v} := \mu_{\ell^\infty}(k(v)) = \mu_{\ell^\infty} \cap k(v)^\times$.
This pairing is compatible with the action of $\Gal(k(w)|k(v))$ on $I_{w|v}$; in particular $\Gal(k(w)|k(v))$ acts on $I_{w|v}$ via the cyclotomic character $\Gal(k(w)|k(v)) \twoheadrightarrow \Gal(k(v)(\mu_{\ell^w})|k(v))$.

If we have a tower of pro-$\ell$ Galois extensions of valued fields: $(K,v) \subset (L,w) \subset (F,w')$ then the corresponding pairings are compatible.
I.e. the following diagram is commutative in the natural sense:
\[
\xymatrix{
I_{w'|v} \ar@{->>}[d]  &*-<45pt>{\times}& (\Gamma_{w'}/\Gamma_v) \ar[r] & \mu_{\ell^{w'}} \\
I_{w|v} &*-<45pt>{\times}&  (\Gamma_{w}/\Gamma_v)  \ar[u] \ar[r] & \mu_{\ell^w} \ar[u] \\
}
\]
Moreover, the two pairings are compatible with the action of $\Gal(k(w')|k(v))$ on $I_{w'|v}$ resp. $\Gal(k(w)|k(v))$ on $I_{w|v}$. 
I.e. the surjective map $I_{w'|v} \twoheadrightarrow I_{w|v}$ is $(\Gal(k(w')|k(v)))$-equivariant; here $\Gal(k(w')|k(v))$ acts on $I_{w|v}$ via the projection $\Gal(k(w')|k(v))$ onto $\Gal(k(w)|k(v))$.

The proof of the following proposition can be found in \cite{Pop2010} Fact 2.1 in the pro-$\ell$ case and in \cite{Pop2011a} in the mod-$\ell$ case, but is explicitly stated for valuations $v$ such that $\Char k(v) \neq \ell$.
It turns out that the same proof works, at least in one direction, even if $\Char k(v) = \ell$ and we summarize this in the proposition below.
%The argument given for the case where $\Char k(v) = \ell$ uses a discussion from \cite{Pop2010b} given in the proof of Lemma 2.3(2) from loc.cit.. 

\begin{prop}
\label{prop:decomp-thy-prop}
Let $(K,v)$ be a valued field such that $\Char K \neq \ell$ and $\mu_\ell \subset K$.
Recall that $D_v^1 = \Gal(K'|K(\sqrt[\ell]{1+\mf_v}))$ and $I_v^1 = \Gal(K'|K(\sqrt[\ell]{\Oc_v^\times}))$.
Then $D_v^1 \leq D_v$ and $I_v^1 \leq I_v$.
If furthermore $\Char k(v) \neq \ell$ then $D_v^1 = D_v$ and $I_v^1 = I_v$.

Suppose furthermore that $\mu_{\ell^\infty} \subset K$.
Recall that $\widehat D_v^1 = \Gal\left(K^{ab}|K\left(\sqrt[\ell^\infty]{1+\mf_v}\right)\right)$ and $\widehat I_v^1 = \Gal\left(K^{ab}|K\left(\sqrt[\ell^\infty]{\Oc_v^\times}\right)\right)$.
Then $\widehat D_v^1 \leq \widehat D_v$ and $\widehat I_v^1 \leq \widehat I_v$.
If furthermore $\Char k(v) \neq \ell$ then $\widehat D_v = \widehat D_v^1$ and $\widehat I_v = \widehat I_v^1$.
\end{prop}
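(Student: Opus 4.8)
The plan is to transport everything through the Kummer correspondence $A\leftrightarrow A^\perp$ between closed subgroups of $\Gc_K^a$ and subgroups of $K^\times$ containing $\Kl$ (and its pro-$\ell$ analogue). By definition $D_v^1=\bigl((1+\mf_v)\cdot\Kl\bigr)^\perp$ and $I_v^1=\bigl(\Oc_v^\times\cdot\Kl\bigr)^\perp$, while $D_v=(D_v^\perp)^\perp$ and $I_v=(I_v^\perp)^\perp$. Since $\perp$ reverses inclusions and the pairing is perfect, the inclusions $D_v^1\le D_v$, $I_v^1\le I_v$ are equivalent to $D_v^\perp\subseteq(1+\mf_v)\cdot\Kl$ and $I_v^\perp\subseteq\Oc_v^\times\cdot\Kl$ as subgroups of $K^\times$, and the equalities claimed when $\Char k(v)\ne\ell$ to the reverse inclusions. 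So the statement reduces to computing, or bounding, the two Kummer duals $D_v^\perp$ and $I_v^\perp$ --- i.e.\ to Hilbert decomposition theory read off through Kummer theory. First I would record the translation: passing to a henselization $(K^h,v^h)$ of $(K,v)$ (legitimate as $\mu_\ell\subset K\subseteq K^h$), restriction identifies $D_v$ with $\Gal(K'\cdot K^h|K^h)$ and $I_v$ with the inertia subgroup of that group, so that for $a\in K^\times$ one has $a\in D_v^\perp$ iff $a\in(K^h)^{\times\ell}$, and $a\in I_v^\perp$ iff $K(\sqrt[\ell]{a})|K$ is unramified at $v$. This translation needs no hypothesis on $\Char k(v)$, and for $\Char k(v)\ne\ell$ it is exactly the $\Gc_K^a$-shadow of the classical computation in \cite{Pop2010}, \cite{Pop2011a}.

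Next I would prove the two inclusions, which hold in arbitrary residue characteristic --- this is ``one direction'' of the classical argument. If $a\in I_v^\perp$, then $K(\sqrt[\ell]{a})|K$ is unramified at $v$, so its value group does not grow; combined with $\ell\cdot v'(\sqrt[\ell]{a})=v(a)$ this forces $v(a)\in\ell\Gamma_v$, and after multiplying $a$ by an $\ell$-th power in $K^\times$ we may take $a\in\Oc_v^\times$, giving $I_v^\perp\subseteq\Oc_v^\times\cdot\Kl$. If $a\in D_v^\perp$, write $a=c^\ell$ with $c\in K^h$; then $v(a)=\ell\,v^h(c)\in\ell\Gamma_v$, so after adjusting $a$ by an $\ell$-th power in $K^\times$ we may assume $c\in\Oc_{v^h}^\times$; since $k(v^h)=k(v)$ we may lift $\bar c$ to $u\in\Oc_v^\times$, and then $a/u^\ell\in 1+\mf_v$, giving $D_v^\perp\subseteq(1+\mf_v)\cdot\Kl$. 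This establishes $D_v^1\le D_v$ and $I_v^1\le I_v$ unconditionally, and the pro-$\ell$ versions follow by running the same computation layer-by-layer with $\ell^n$ in place of $\ell$ and passing to the inverse limit, using that $\Pi_K^a$ is Kummer-dual to $\widehat K$.

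For the equalities under $\Char k(v)\ne\ell$ I would add the reverse inclusions. If $u\in\Oc_v^\times$, then $X^\ell-\bar u$ is separable over $k(v)$, so $K(\sqrt[\ell]{u})|K$ has unchanged value group ($v(u)=0$) and separable residue extension, hence is unramified at $v$, i.e.\ $u\in I_v^\perp$; thus $\Oc_v^\times\cdot\Kl\subseteq I_v^\perp$. If $a\in 1+\mf_v$, then $X^\ell-a$ reduces mod $\mf_{v^h}$ to $X^\ell-1$, which has $\ell$ simple roots since $\Char k(v)\ne\ell$ (and $\mu_\ell\subset K$), so Hensel's lemma gives $\sqrt[\ell]{a}\in K^h$, i.e.\ $a\in(K^h)^{\times\ell}=D_v^\perp$; thus $(1+\mf_v)\cdot\Kl\subseteq D_v^\perp$. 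Hence $D_v^\perp=(1+\mf_v)\cdot\Kl$ and $I_v^\perp=\Oc_v^\times\cdot\Kl$, so $D_v=D_v^1$ and $I_v=I_v^1$; the pro-$\ell$ equalities follow as before, now using $\mu_{\ell^\infty}\subset K$ so that Hensel's lemma applies to $X^{\ell^n}-a$ at each layer.

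The only delicate point --- the ``main obstacle'' --- is confirming that the two inclusions of the middle paragraph genuinely survive without the Hensel/separability input that is available only when $\Char k(v)\ne\ell$; this is precisely why mere inclusions (not equalities) are asserted in that generality. The computation above isolates what is actually used --- that ``unramified in $K(\sqrt[\ell]{a})$'' and ``$a\in(K^h)^{\times\ell}$'' each already force $v(a)\in\ell\Gamma_v$ and an $\ell$-th-power residue for a unit representative --- and all of this is characteristic-free, so no extra work is needed there.
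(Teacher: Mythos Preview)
Your proof is correct and follows essentially the same route as the paper's: both arguments translate the inclusions $D_v^1\le D_v$, $I_v^1\le I_v$ into the Kummer-dual statements $D_v^\perp\subseteq(1+\mf_v)\cdot\Kl$ and $I_v^\perp\subseteq\Oc_v^\times\cdot\Kl$, prove these using only that the decomposition/inertia fixed field has the same value group and residue field as $K$ (the paper works in the decomposition field $(K')^{D_v}$, you in the Henselization $K^h$, but these share the relevant properties), and then obtain the reverse inclusions from Hensel's lemma when $\Char k(v)\ne\ell$. The only cosmetic difference is your explicit framing via $\perp$ and $K^h$ versus the paper's direct manipulation of $(K')^{D_v}$ and $(K')^{I_v}$; the underlying computation is the same, and both defer to \cite{Pop2010}, \cite{Pop2011a} for the $\Char k(v)\ne\ell$ case.
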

\begin{proof}
The pro-$\ell$ case is precisely \cite{Pop2010} Fact 2.1 if $\Char k(v) \neq \ell$.
If $\Char k(v) = \ell$, we note that the required direction in the proof of loc.cit. still holds.
Below we sketch the mod-$\ell$ adaptation as the pro-$\ell$ case is virtually identical.

Suppose $a \in K^\times$ is such that $\sqrt[\ell]{a} \in (K')^{D_v}$ and denote by $w$ a prolongation of $v$ to $(K')^{D_v}$.
Since $\Gamma_w = \Gamma_v$, there exists $y \in K^\times$ such that $v(a) = \ell \cdot v(y)$.
Moreover, as $k(v) = k(w)$, there exists $z \in \Oc_v^\times$ such that $\sqrt[\ell]{a}/y \in z \cdot (1+\mf_w)$.
Namely, $a/(yz)^\ell \in (1+\mf_v)$ so that $\sqrt[\ell]{a} \in K(\sqrt[\ell]{1+\mf_v})$.
Thus, $D_v^1 \leq D_v$ since $(K')^{D_v} \subset K(\sqrt[\ell]{1+\mf_v})$.
The proof that $(K')^{I_v} \subset K(\sqrt[\ell]{\Oc_v^\times})$ is similar.

Assume furthermore that $\Char k(v) \neq \ell$.
Let $(K^Z,v)$ be some Henselization of $(K,v)$; recall that $K^Z \cap K' = (K')^{D_v}$.
Let $a \in 1+\mf_v$ be given.
The polynomial $X^\ell - a$ reduces mod $\mf_v$ to $X^\ell - 1$. Since $\Char k(v) \neq \ell$ one has $\mu_\ell \subset k(v)$ and this polynomial has $\ell$ unique roots in $k(v)$.
Namely, $X^\ell-a$ has a root in $K^Z \cap K' = (K')^{D_v}$.
By Hensel's lemma, $K(\sqrt[\ell]{1+\mf_v}) \subset (K')^{D_v}$.
The proof that $K(\sqrt[\ell]{\Oc_v^\times}) \subset (K')^{I_v}$ is similar.
\end{proof}

\begin{remark}
\label{rem:charkv-is-ell}
If $\Char K \neq \ell$, $\mu_\ell \subset K$ and $\Char k(v) = \ell$, one has $I_v^1 \leq D_v^1 \leq I_v$.
This can be deduced in a similar way to \cite{Pop2010b} Lemma 2.3(2); we sketch the argument below.
Denote by $\lambda = \omega - 1 \in K$ where $\omega = \omega_\ell$ is our fixed generator of $\mu_\ell$ and recall that $v(\lambda) > 0$ since $\Char k(v) = \ell$.
Let $u \in \Oc_v^\times$ be given and set $u' = \lambda^\ell \cdot u + 1 \in 1+\mf_v$.
Then the extension of $K$ corresponding to the equation $X^\ell - u'$ is precisely the same as the extension of $K$ corresponding to the equation $Y^\ell - Y + \lambda \cdot f(Y) = u$ for some (explicit) polynomial $f(Y)$ -- this is done by making the change of variables $X = \lambda Y + 1$.
On the other hand, the maximal $(\Z/\ell)$-elementary abelian Galois extension of $k(v)$ is the extension of $k(v)$ generated by roots of polynomials of the form $X^\ell - X = \bar u$ for $\bar u \in k(v)$.
Thus, the maximal $(\Z/\ell)$-elementary abelian Galois extension of $k(v)$ is a subextension of the residue extension corresponding to $K(\sqrt[\ell]{1+\mf_v})|K$.
This immediately implies that $I_v^1 \leq D_v^1 \leq I_v$ as required.
\end{remark}

\subsection{Generalities on the Cohomology of Pro-$\ell$ Groups}
\label{sec:gener-cohom-pro-ell}

In the context of profinite groups, a ``subgroup'' will always mean a closed subgroup and a ``homomorphism'' will always mean a continuous homomorphism.
If $\Gc$ is a profinite group and $\Sc$ is a subset of $\Gc$, we will use the notation $\langle \Sc \rangle$ for the {\bf closure} of the subgroup generated by $\Sc$.
Whenever we have a surjective map $\Gc \twoheadrightarrow \Hc$ of profinite groups, and a subset $\Sc$ of $\Hc$ which converges to $1$, we will use the term ``pick lifts'' of $\Sc$ to mean: (1) take a continuous section $\widetilde{(\bullet)} : \Hc \hookrightarrow \Gc$ of $\Gc \twoheadrightarrow \Hc$ and (2) take $\widetilde \Sc$ as our set of lifts; this ensures that $\widetilde \Sc$ also converges to $1$ in $\Gc$.

Let $\Gc$ be a pro-$\ell$ group.
We will use the notation $H^i(\Gc) := H^i(\Gc,\Z/\ell)$ and $\widehat H^i(\Gc) := H^i(\Gc,\Z_\ell)$ for simplicity; here $H^*(\Gc,A)$ denotes the continuous-cochain cohomology of $\Gc$ with values in $A$.
We also denote by $\beta : H^1(\Gc) \rightarrow H^2(\Gc)$ the Bockstein map; i.e. this is the connecting homomorphism associated to the short exact sequence of trivial $\Gc$-modules: 
\[ 0 \rightarrow \Z/\ell \rightarrow \Z/\ell^2 \rightarrow \Z/\ell \rightarrow 0. \]

Recall the definition of the $(\Z/\ell)$-central descending series of $\Gc$:
\[ \Gc^{(1)} = \Gc, \ \ \Gc^{(i+1)} = [\Gc, \Gc^{(i)}] \cdot (\Gc^{(i)})^\ell. \]
For simplicity, we denote $\Gc^a = \Gc/\Gc^{(2)}$ and $\Gc^c = \Gc/\Gc^{(3)}$.
The subgroup $\Gc^{(2)}$ is precisely the Frattini subgroup of $\Gc$.
Thus, any minimal generating set $(\sigma_i)_{i \in I}$ of $\Gc^a$ which converges to $1$ yields a minimal generating set $(\sigma_i)_{i \in I}$ of $\Gc$ which converges to $1$ by choosing a continuous section $\Gc^a \hookrightarrow \Gc$ of $\Gc \twoheadrightarrow \Gc^a$; thus, in turn, we obtain a free presentation $S \twoheadrightarrow \Gc$ where $S$ is the free pro-$\ell$ group of $(\sigma_i)_{i \in I}$ and the map $S \rightarrow \Gc$ induces an isomorphism $S^a \xrightarrow{\cong} \Gc^a$.

We also recall the definition of the usual central descending series of $\Gc$:
\[ \Gc^{(1,\infty)} := \Gc, \ \ \Gc^{(i+1,\infty)} = [\Gc,\Gc^{(i,\infty)}]. \]
For simplicity we denote $\Gc^{ab} := \Gc/\Gc^{(2,\infty)}$ and $\Gc^C = \Gc/\Gc^{(3,\infty)}$.
If $\Gc^{ab}$ is torsion-free, the free presentation $S \twoheadrightarrow \Gc$ from above can be picked so that also $S^{ab} \xrightarrow{\cong} \Gc^{ab}$ is an isomorphism.

\begin{defn}
\label{defn:commutator-and-beta}
Let $\Gc$ be a pro-$\ell$ group.
For $\sigma,\tau \in \Gc^a$, we denote by $[\sigma,\tau] = \tilde\sigma^{-1} \tilde\tau^{-1} \tilde\sigma\tilde\tau$ where $\tilde\sigma,\tilde\tau \in \Gc^c$ denote some lifts of $\sigma,\tau \in \Gc^a$ under the canonical projection $\Gc^c \twoheadrightarrow \Gc^a$.
We denote by $\sigma^\ell = \tilde\sigma^\ell$ where, again, $\tilde\sigma \in \Gc^c$ is some lift of $\sigma \in \Gc^a$.
To simplify the exposition, if $\ell = 2$, we denote by $\sigma^\beta = 0$ and if $\ell \neq 2$ we denote by $\sigma^\beta = \sigma^\ell$.

Since $\Gc^{(2)}/\Gc^{(3)}$ is central in $\Gc^c$ and is killed by $\ell$, the elements $[\sigma,\tau]$ and $\sigma^\ell$ are independent of choice of lifts $\tilde\sigma,\tilde\tau$.
Thus, one has well defined maps:
\[ [\bullet,\bullet] : \Gc^a \times \Gc^a \rightarrow \Gc^{(2)}/\Gc^{(3)}, \ \ (\bullet)^\ell : \Gc^a \rightarrow \Gc^{(2)}/\Gc^{(3)}, \ \ (\bullet)^\beta : \Gc^a \rightarrow \Gc^{(2)}/\Gc^{(3)}. \]

We have a similarly defined map $\Gc^{ab} \times \Gc^{ab} \rightarrow \Gc^{(2,\infty)}/\Gc^{(3,\infty)}$ which we also denote by $[\bullet,\bullet]$ when no confusion is possible.
\end{defn}

The following fact follows from \cite{Neukirch2008} Proposition 3.8.3:

\begin{fact}
\label{fact:bilinear-linear}
For any prime $\ell$, the map $[\bullet,\bullet] : \Gc^a \times \Gc^a \rightarrow \Gc^{(2)}/\Gc^{(3)}$ is $(\Z/\ell)$-bilinear and the map $[\bullet,\bullet] : \Gc^{ab} \times \Gc^{ab} \rightarrow \Gc^{(2,\infty)}/\Gc^{(3,\infty)}$ is $\Z_\ell$-bilinear.
Moreover, the map $(\bullet)^\beta$ is $(\Z/\ell)$-linear (recall this is forced to be the trivial map if $\ell = 2$).
\end{fact}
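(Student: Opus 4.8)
The plan is to reduce everything to the standard commutator and power calculus in a group of nilpotency class at most two, applied to the quotients $\Gc^c = \Gc/\Gc^{(3)}$ and $\Gc^C = \Gc/\Gc^{(3,\infty)}$; this is essentially \cite{Neukirch2008} Proposition 3.8.3, and the argument is short. First I would record the elementary identities valid in any group $G$ with $[G,G]$ central: for $x,y,z \in G$,
\[ [xy,z] = [x,z]\cdot[y,z], \qquad [x,yz] = [x,y]\cdot[x,z], \]
and consequently $[x^n,z] = [x,z]^n$ for $n \in \Z$; moreover one has the Hall--Petrescu-type identity
\[ (xy)^n = x^n\,y^n\,[y,x]^{\binom{n}{2}}. \]

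I would apply these with $G = \Gc^c$: here $[G,G] = \Gc^{(2)}/\Gc^{(3)}$ is central (so the identities apply) and is killed by $\ell$, since $(\Gc^{(2)})^\ell \leq \Gc^{(3)}$ by the very definition of $\Gc^{(3)}$. The first two identities say that the commutator map $\Gc^c \times \Gc^c \to \Gc^{(2)}/\Gc^{(3)}$ is $\Z$-bilinear; since by Definition \ref{defn:commutator-and-beta} its value depends only on the images in $\Gc^a$, and since both $\Gc^a$ and $\Gc^{(2)}/\Gc^{(3)}$ are $\Z/\ell$-vector spaces, the induced map $[\bullet,\bullet] : \Gc^a \times \Gc^a \to \Gc^{(2)}/\Gc^{(3)}$ is $(\Z/\ell)$-bilinear. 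The same two identities applied in $G = \Gc^C$ give that $[\bullet,\bullet] : \Gc^{ab} \times \Gc^{ab} \to \Gc^{(2,\infty)}/\Gc^{(3,\infty)}$ is $\Z$-bilinear; to promote this to $\Z_\ell$-bilinearity I would invoke continuity — the map is continuous, $\Gc^{ab}$ is a topological $\Z_\ell$-module, and $\Z$ is dense in $\Z_\ell$ — so that the relation $[\sigma^n,\tau] = [\sigma,\tau]^n$ for $n \in \Z$ passes to limits and holds for all $n \in \Z_\ell$, and symmetrically in the second variable.

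For the assertion on $(\bullet)^\beta$, if $\ell = 2$ it is by definition the zero map and there is nothing to prove. If $\ell$ is odd, I would use the power identity in $\Gc^c$, namely $(\tilde\sigma\tilde\tau)^\ell = \tilde\sigma^\ell\,\tilde\tau^\ell\,[\tilde\tau,\tilde\sigma]^{\binom{\ell}{2}}$, and observe that $\binom{\ell}{2} = \ell(\ell-1)/2$ is divisible by $\ell$ while $[\tilde\tau,\tilde\sigma]$ lies in $\Gc^{(2)}/\Gc^{(3)}$, which is killed by $\ell$; hence the last factor is trivial and $(\tilde\sigma\tilde\tau)^\ell = \tilde\sigma^\ell\tilde\tau^\ell$. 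Combined with $(\tilde\sigma^n)^\ell = (\tilde\sigma^\ell)^n$ — immediate, since powers of a single element commute — and the fact that the value depends only on the image in $\Gc^a$, this shows $(\bullet)^\ell = (\bullet)^\beta$ is $(\Z/\ell)$-linear. I do not expect a genuine obstacle here; the only points meriting care are the density/continuity step used for $\Z_\ell$-bilinearity in the pro-$\ell$ abelian setting, where the target $\Gc^{(2,\infty)}/\Gc^{(3,\infty)}$ need not be annihilated by a power of $\ell$, and the parity phenomenon in the power map: the divisibility of $\binom{\ell}{2}$ by $\ell$ is precisely what forces additivity, and it fails exactly when $\ell = 2$ — which is why $(\bullet)^\beta$ is \emph{defined} to vanish in that case.
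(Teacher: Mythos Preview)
Your argument is correct and is precisely the standard class-two nilpotent group calculus that underlies the reference the paper cites; the paper itself does not give a proof but simply appeals to \cite{Neukirch2008} Proposition~3.8.3, and what you have written is essentially an unpacking of that reference. There is nothing to add.
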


Let $\Gc$ be a pro-$\ell$ group and pick a free presentation $S \twoheadrightarrow \Gc$, where $S$ is a free pro-$\ell$ group, such that the induced map $S^a \rightarrow \Gc^a$ is an isomorphism.
Denote by $T$ the kernel of $S \twoheadrightarrow \Gc$ and consider the spectral sequence associated to the extension $1 \rightarrow T \rightarrow S \rightarrow \Gc \rightarrow 1$:
\[ H^p(\Gc,H^q(T)) \Rightarrow H^{p+q}(S). \]
Since $S$ and $T$ have $\ell$-cohomological dimension $\leq 1$ and $S^a \xrightarrow{\cong} \Gc^a$ is an isomorphism, we deduce that the differential $d_2 : E_2^{0,1} \rightarrow E_2^{2,0}$ is an isomorphism; namely, $d_2 : H^1(T)^{\Gc} \xrightarrow{\cong} H^2(\Gc)$.
On the other hand, $H^1(T)^{\Gc} = \Hom_\Gc(T,\Z/\ell) = \Hom(T/([S,T] \cdot T^\ell), \Z/\ell)$.
Thus, $d_2$ induces a pairing
\[ T \times H^2(\Gc) \rightarrow \Z/\ell \]
whose right kernel is trivial and whose left kernel is precisely $[S,T] \cdot T^\ell$.
Thus, we obtain a perfect pairing
\[ \left(\frac{T}{[S,T] \cdot T^\ell}\right) \times H^2(\Gc) \rightarrow \Z/\ell \]
defined by $(t,\xi) \mapsto (d_2^{-1}(\xi))(t)$.

Let $\Gc$ be a torsion-free pro-$\ell$ group such that $\Gc^{ab}$ is torsion-free (e.g. $\Gc = \Gc_K$ for some field $K$ with $\Char K \neq \ell$ and $\mu_{\ell^\infty} \subset K$). 
We can, in this case, pick a presentation $S \rightarrow \Gc$ so that $S^{ab} \rightarrow \Gc^{ab}$ is an isomorphism.
We denote the kernel of this map by $\widehat T$.
Similarly to above, $d_2 : \widehat H^1(\widehat T)^{\Gc} \rightarrow \widehat H^2(\Gc)$ is an isomorphism where $d_2$ denotes the differential in the spectral sequence:
\[ \widehat H^p(\Gc, \widehat H^q(T)) \Rightarrow \widehat H^{p+q}(S). \]
We therefore obtain a pairing:
\[ \widehat T \times \widehat H^2(\Gc) \rightarrow \Z_\ell \]
whose right kernel is trivial and whose left kernel is precisely $[\widehat T,S]$.
Thus, we obtain a perfect pairing
\[ \left(\frac{\widehat T}{[\widehat T,S]} \right) \times \widehat H^2(\Gc) \rightarrow \Z_\ell \]
defined by $(t,\xi) \mapsto d_2^{-1}(\xi)(t)$.
In the next proposition, we provide a complete and explicit description of these pairings in certain cases which apply and, in particular, when $\Gc$ is the maximal pro-$\ell$ Galois group of a field $K$ with $\Char K \neq \ell$ and $\mu_\ell \subset K$ resp $\mu_{\ell^\infty}$.

\begin{prop}
\label{prop:central-kernel-pairig-with-H-2}
Let $S$ be a free pro-$\ell$ group on generators $(\sigma_i)_{i \in I}$ and give $I$ a total ordering.
Then any element $\bar \rho \in S^{(2)}/S^{(3)}$ has a unique representation as:
\[\bar \rho = \prod_{i < j} [\sigma_i,\sigma_j]^{a_{ij}(\bar \rho)} \cdot \prod_r (\sigma_r^\ell)^{b_r(\bar\rho)} \]
where $\sigma_i$ are considered as elements of $S^a$.
Any element $\bar \delta \in S^{(2,\infty)}/S^{(3,\infty)}$ has a unique representation as:
\[ \bar \delta = \prod_{i < j} [\sigma_i,\sigma_j]^{\widehat a_{ij}(\bar \delta)} \]
where $\sigma_i$ are considered as elements of $S^{ab}$.

Let $\Gc$ be a pro-$\ell$ group and pick a free presentation $S \twoheadrightarrow \Gc$ which induces an isomorphism $S^a \rightarrow \Gc^a$ where $S$ is a free pro-$\ell$-group on $(\sigma_i)_{i \in I}$.
We again endow $I$ with a total ordering.
Denote by $T$ the kernel of the homomorphism $S \twoheadrightarrow \Gc$ and consider the canonical pairing:
\[ (\bullet,\bullet) : \left(\frac{T}{[S,T] \cdot T^\ell}\right) \times H^2(\Gc) \rightarrow \Z/\ell. \]
Denote by $(x_i)_{i \in I}$ the dual basis of $H^1(\Gc)$ associated to $(\sigma_i)_{i \in I}$.
Then
\begin{itemize}
\item $(t,x_i \cup x_j) =  - a_{ij}(t)$ for $i < j$.
\item $(t,\beta x_r) = -b_r(t)$.
\end{itemize}

Assume that the inflation map $H^2(\Gc^a) \rightarrow H^2(\Gc)$ is surjective and denote by $R$ the kernel of the induced surjective map $S^c \twoheadrightarrow \Gc^c$.
Then the canonical map $T / ([S,T] \cdot T^\ell) \rightarrow R$ is an isomorphism.
In particular, we obtain an induced perfect pairing: 
\[ (\bullet,\bullet) : R \times H^2(\Gc) \rightarrow \Z/\ell \]
such that
\begin{itemize}
\item $(\bar\rho, x_i \cup x_j) = -a_{ij}(\bar \rho)$ if $i < j$, $i,j \in I$.
\item $(\bar\rho, \beta x_r) = -b_r(\bar \rho)$ for $r \in I$.
\end{itemize}

Suppose furthermore that $\Gc^{ab}$ is torsion-free.
Pick $S \rightarrow \Gc$ so that furthermore $S^{ab} \rightarrow \Gc^{ab}$ is an isomorphism.
Denote by $\widehat T$ the kernel of the homomorphism $S \twoheadrightarrow \Gc$ and consider the canonical pairing:
\[ (\bullet,\bullet) : \left(\frac{\widehat T}{[S,\widehat T]}\right) \times \widehat H^2(\Gc) \rightarrow \Z_\ell. \]
Denote by $(x_i)_{i \in I}$ the dual basis of $\widehat H^1(\Gc)$ associated to $(\sigma_i)_{i \in I}$.
Then $(t,x_i \cup x_j) = -\widehat a_{ij}(t)$ for $i < j$.

Assume moreover that the inflation map $\widehat H^2(\Gc^{ab}) \rightarrow \widehat H^2(\Gc)$ is surjective and denote by $\widehat R$ the kernel of the induced surjective map $S^C \twoheadrightarrow \Gc^C$.
Then the canonical map $\widehat T / [S,\widehat T] \rightarrow \widehat R$ is an isomorphism.
In particular, we obtain an induced perfect pairing:
\[ (\bullet,\bullet) : \widehat R \times \widehat H^2(\Gc) \rightarrow \Z_\ell \]
such that $(\bar \delta, x_i \cup x_j) = -\widehat a_{ij}(\bar \delta)$ if $i < j$, $i,j \in I$.
\end{prop}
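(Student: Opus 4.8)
The plan is to first dispatch the purely free-group statement, then compute the pairing by making the Hochschild–Serre transgression $d_2$ explicit and using naturality to reduce to two tiny groups, and finally to identify $T/([S,T]\cdot T^\ell)$ with $R$ (resp. $\widehat T/[S,\widehat T]$ with $\widehat R$) by a duality argument. For the unique expansion of $\bar\rho\in S^{(2)}/S^{(3)}$ and $\bar\delta\in S^{(2,\infty)}/S^{(3,\infty)}$: that the displayed elements \emph{generate} these groups is immediate from Fact~\ref{fact:bilinear-linear} (bilinearity of $[\bullet,\bullet]$, and the fact that $(\bullet)^\ell$ is linear up to central commutator terms), so that every commutator and every $\ell$-th power reduces modulo $S^{(3)}$ to a $\Z/\ell$-combination of the $[\sigma_i,\sigma_j]$ ($i<j$) and $\sigma_r^\ell$. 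Linear independence is the classical computation $\dim_{\F_\ell}S^{(2)}/S^{(3)}=\binom{|I|}{2}+|I|$ coming from the free restricted Lie algebra attached to the mod-$\ell$ Zassenhaus filtration (for $\ell$ odd one may also simply detect $[\sigma_i,\sigma_j]$ by the order-$\ell^3$ Heisenberg quotient and $\sigma_r^\ell$ by the $\Z/\ell^2$-quotient); the lower-central-series version is the familiar $S^{(2,\infty)}/S^{(3,\infty)}\cong\Lambda^2(S^{ab})$.

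For the pairing formula, the key observation is that $x_i,x_j\in H^1(\Gc)=\Hom(\Gc^a,\Z/\ell)$ and that the Bockstein commutes with inflation, so $x_i\cup x_j$ and $\beta x_r$ all lie in the image of the inflation $H^2(\Gc^a)\to H^2(\Gc)$. Applying naturality of the Hochschild–Serre transgression to the morphism of extensions $(1\to T\to S\to\Gc\to1)\to(1\to S^{(2)}\to S\to\Gc^a\to1)$ (same free $S$, inclusion $T\subseteq S^{(2)}$), one gets that the pairing $(\bullet,\bullet)$ on $\bigl(T/([S,T]T^\ell)\bigr)\times H^2(\Gc)$ is compatible, via $t\mapsto\bar t$, with the analogous pairing on $\bigl(S^{(2)}/S^{(3)}\bigr)\times H^2(\Gc^a)$; hence it suffices to compute $(\bar e,x_i\cup x_j)$ and $(\bar e,\beta x_r)$ for the elementary abelian group $\Gc^a$ itself. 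A further use of naturality, now along the coordinate projections $\Gc^a\twoheadrightarrow(\Z/\ell)^{\{i,j\}}$ and $\Gc^a\twoheadrightarrow\langle\sigma_r\rangle$, reduces everything to the two base cases $\Gc=(\Z/\ell)^2$ and $\Gc=\Z/\ell$. In each base case one inverts $d_2$ concretely: represent $\beta x$ by the standard carrying cocycle and $x_i\cup x_j$ by $(a,b)\mapsto x_i(a)x_j(b)$, lift to $S$ where the class vanishes because $\mathrm{cd}_\ell S\le1$, write the lift as a coboundary $\delta b$, and evaluate $b$ on the relevant basis vectors; a short computation gives $b(\sigma^\ell)=-1$ in the rank-one Bockstein case and $b([\sigma_i,\sigma_j])=-1$, $b(\sigma_r^\ell)=0$ in the rank-two cup case, with the cross terms vanishing by the same naturality. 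Assembling these yields exactly $(t,x_i\cup x_j)=-a_{ij}(t)$ and $(t,\beta x_r)=-b_r(t)$.

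For the $R$-statement: by the same naturality of transgression, under the identifications $H^2(\Gc^a)=(S^{(2)}/S^{(3)})^\vee$ and $H^2(\Gc)=(T/([S,T]T^\ell))^\vee$ afforded by the perfect pairings, the inflation $H^2(\Gc^a)\to H^2(\Gc)$ is the transpose of the natural map $T/([S,T]T^\ell)\to S^{(2)}/S^{(3)}$, $t\mapsto\bar t$. That map factors as $T/([S,T]T^\ell)\twoheadrightarrow R\hookrightarrow S^{(2)}/S^{(3)}$, where $R=\ker(S^c\twoheadrightarrow\Gc^c)$ is precisely the image of $T$ in $S^c$; therefore the inflation is surjective iff $t\mapsto\bar t$ is injective iff $T/([S,T]T^\ell)\xrightarrow{\ \sim\ }R$. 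Transporting the pairing and the formulas of the previous paragraph along this isomorphism — under which the classes $[\sigma_i,\sigma_j]$ and $\sigma_r^\ell$ keep their names in $R$ — gives the asserted perfect pairing $R\times H^2(\Gc)\to\Z/\ell$ with $(\bar\rho,x_i\cup x_j)=-a_{ij}(\bar\rho)$ and $(\bar\rho,\beta x_r)=-b_r(\bar\rho)$. The pro-$\ell$ assertions follow by the identical arguments with $\Z/\ell$, $\Gc^{(\bullet)}$, $\Gc^c$ replaced throughout by $\Z_\ell$, $\Gc^{(\bullet,\infty)}$, $\Gc^C$, and with the hypothesis that $\Gc^{ab}$ be torsion-free used (as in the discussion preceding the proposition) to guarantee perfectness of $(\widehat T/[S,\widehat T])\times\widehat H^2(\Gc)\to\Z_\ell$ and the existence of a presentation with $S^{ab}\xrightarrow{\ \sim\ }\Gc^{ab}$; there is no Bockstein term, so only the cup-product base case is needed. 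The main obstacle is the middle paragraph: setting up the naturality of the transgression/relation-module pairing with enough care to justify the reductions, and pinning down the signs (and the $\ell=2$ peculiarities) in the explicit cocycle computations for $\cup$ and $\beta$ in the elementary abelian base cases.
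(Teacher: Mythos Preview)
Your proposal is correct and, for the $R$-statement (identifying $T/([S,T]\cdot T^\ell)$ with $R$ via duality of inflation), essentially identical to the paper's argument: both observe that under the perfect pairings the inflation $H^2(\Gc^a)\to H^2(\Gc)$ is dual to the natural map $T/([S,T]\cdot T^\ell)\to S^{(2)}/S^{(3)}$, so surjectivity of inflation forces injectivity, whence $T\cap S^{(3)}=[S,T]\cdot T^\ell$ and the map to $R$ is an isomorphism.

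Where you differ is in the treatment of the unique-representation statement and the explicit pairing formulas $(t,x_i\cup x_j)=-a_{ij}(t)$, $(t,\beta x_r)=-b_r(t)$. The paper simply cites \cite{Neukirch2008} Propositions~3.9.13 and~3.9.14 together with a limit argument for infinite $I$, and does not work anything out by hand. You instead sketch a self-contained derivation: generation from Fact~\ref{fact:bilinear-linear}, independence from the restricted Lie algebra, and the signs by naturality of the Hochschild--Serre transgression to reduce first to $\Gc^a$ and then to the rank-one and rank-two base cases, where the cocycle computation is explicit. This is a genuine elaboration rather than a different idea --- it is essentially unpacking what lies behind the cited propositions --- and has the advantage of being independent of the reference and of making the signs transparent; the paper's approach has the advantage of brevity.
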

\begin{proof}
Below we show the mod-$\ell$ case as the pro-$\ell$ case is virtually identical replacing $[S,T] \cdot T^\ell$ with $[S,\widehat T]$, and making the obvious changes in the notation.
Denote by $T$ the kernel of $S \twoheadrightarrow \Gc$.
Using a combination of \cite{Neukirch2008} Proposition 3.9.13 and 3.9.14, along with a standard limit argument in the case where $I$ is infinite, we only need to show that the canonical surjective map $T/([S,T] \cdot T^\ell) \rightarrow R$ is an isomorphism (see also the discussion immediately preceding this proposition).

Consider the free presentation $S \twoheadrightarrow \Gc \twoheadrightarrow \Gc^a$.
The differential $d_2$ in the spectral sequence associated to this extension induces a perfect pairing 
\[ S^{(2)}/S^{(3)} \times H^2(\Gc^a) \rightarrow \Z/\ell \]
which is compatible in the natural sense with the perfect pairing associated to $\Gc$.
I.e. the dual of the inflation map $H^2(\Gc^a) \rightarrow H^2(\Gc)$ is precisely the canonical map $T/([S,T] \cdot T^\ell) \rightarrow S^{(2)}/S^{(3)}$.
This canonical map is injective by our assumption that $H^2(\Gc^a) \rightarrow H^2(\Gc)$ is surjective.
We deduce that $T \cap S^{(3)} = [S,T] \cdot T^\ell$.
On the other hand, $R = (T \cdot S^{(3)})/S^{(3)}$ and so the kernel of the canonical surjective map $T \rightarrow R$ is precisely $[S,T] \cdot T^\ell$, as required.
\end{proof}

\begin{remark}
\label{rem:merk-sus}
Let $K$ be a field such that $\Char K \neq \ell$ and $\mu_{2\ell} \subset K$ (in particular, $G_K$ is torsion-free).
By the Merkurjev-Suslin theorem, the cup product $H^1(K,\mu_\ell) \otimes H^1(K,\mu_\ell) \xrightarrow{\cup} H^2(K,\mu_\ell^{\otimes 2})$ is surjective.
Since $\mu_\ell \cong \Z/\ell$, we deduce that $\Gc_K$ satisfies the assumptions of Proposition \ref{prop:central-kernel-pairig-with-H-2} in the mod-$\ell$ case.
Similarly, if $\mu_{\ell^\infty} \subset K$, $\Gc_K$ satisfies the assumptions of Proposition \ref{prop:central-kernel-pairig-with-H-2} in the pro-$\ell$ case since $H^1(K,\Z_\ell(1)) \otimes H^1(K,\Z_\ell(1)) \xrightarrow{\cup} H^2(K,Z_\ell(2))$ is also surjective.
\end{remark}

\begin{lem}
\label{lem:cup-product}
Let $\Gc$ be a pro-$\ell$ group.
Let $f,g \in \Hom(\Gc,\Z/\ell) = H^1(\Gc^a) = H^1(\Gc^c) = H^1(\Gc)$ be given and $\Z/\ell$-independent.
The following are equivalent:
\begin{enumerate}
\item $f \cup g \neq 0 \in H^2(\Gc)$.
\item $f \cup g \neq 0 \in H^2(\Gc^c)$.
\item $(\ker f)^{(2)} \cdot (\ker g)^{(2)} = \Gc^{(2)}$.
\end{enumerate}

On the other hand, assume that $\Gc^{ab}$ is torsion-free and let $f,g \in \Hom(\Gc,\Z_\ell) = \widehat H^1(\Gc^{ab}) = \widehat H^1(\Gc^C) = \widehat H^1(\Gc)$ be given and $\Z_\ell$-independent.
The following are equivalent:
\begin{enumerate}
\item $f \cup g \neq 0 \in \widehat H^2(\Gc)$.
\item $f \cup g \neq 0 \in \widehat H^2(\Gc^C)$.
\item $(\ker f)^{(2,\infty)} \cdot (\ker g)^{(2,\infty)} = \Gc^{(2,\infty)}$.
\end{enumerate}
\end{lem}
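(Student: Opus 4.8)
The plan is to funnel all three conditions through one auxiliary object. Let $\Hb$ be the non-abelian group of order $\ell^{3}$ which is the central extension of $(\Z/\ell)^{2}$ by $\Z/\ell$ classified by the cup product $u_{1}\cup u_{2}\in H^{2}((\Z/\ell)^{2})$ (the Heisenberg group for $\ell$ odd, the dihedral group of order $8$ for $\ell=2$), and let $\widehat\Hb$ be the Heisenberg group over $\Z_{\ell}$, i.e.\ the central extension of $\Z_{\ell}^{2}$ by $\Z_{\ell}$ with class $u_{1}\cup u_{2}\in\widehat H^{2}(\Z_{\ell}^{2})$. In both cases $[\Hb,\Hb]=Z(\Hb)$, whence $\Hb^{(2)}=Z(\Hb)$ and $\Hb^{(3)}=1$ (and likewise $\widehat\Hb^{(2,\infty)}=Z(\widehat\Hb)$, $\widehat\Hb^{(3,\infty)}=1$). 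The first thing I would record is therefore: \emph{every} homomorphism from a pro-$\ell$ group $\Gc$ to $\Hb$ (resp.\ $\widehat\Hb$) factors through $\Gc^{c}$ (resp.\ $\Gc^{C}$), since it must carry $\Gc^{(3)}$ into $\Hb^{(3)}=1$ (resp.\ $\Gc^{(3,\infty)}$ into $1$).

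For $(1)\Leftrightarrow(2)$: since $f,g$ are inflated from $\Gc^{a}$, the class $f\cup g\in H^{2}(\Gc)$ equals $(f,g)^{*}(u_{1}\cup u_{2})$ for the homomorphism $(f,g)\colon\Gc\to(\Z/\ell)^{2}$, and by the standard interpretation of $H^{2}$ it vanishes iff $(f,g)$ lifts to a homomorphism $\Gc\to\Hb$ over $\Hb\twoheadrightarrow(\Z/\ell)^{2}$; the same statement with $\Gc^{c}$ in place of $\Gc$ characterizes vanishing in $H^{2}(\Gc^{c})$. By the factoring observation above, these two lifting problems have literally the same solutions, so $(1)\Leftrightarrow(2)$. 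The pro-$\ell$ case is identical using $\widehat\Hb$, $\Z_{\ell}$-coefficients and $\Gc^{C}$.

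For $(2)\Leftrightarrow(3)$ I would first reduce $(3)$ to a statement inside $\Gc^{c}$: a commutator manipulation (in the mod-$\ell$ case write $\Gc=\ker f\cdot\ker g$ and use $\Gc^{(2)}\le\ker f\cap\ker g$; in general a Nakayama argument for the $\Z_{\ell}\llbracket\Gc\rrbracket$-module $\Gc^{(2,\infty)}/N$) shows $\Gc^{(3)}\le(\ker f)^{(2)}(\ker g)^{(2)}=:N$, so $(3)$ is equivalent to the image of $N$ in $\Gc^{(2)}/\Gc^{(3)}$ being all of it. Next I would translate $(2)$ through the transgression for $1\to\Gc^{(2)}/\Gc^{(3)}\to\Gc^{c}\to\Gc^{a}\to1$: $f\cup g=0$ in $H^{2}(\Gc^{c})$ iff $f\cup g\in\operatorname{Im}\bigl(\operatorname{tg}\colon(\Gc^{(2)}/\Gc^{(3)})^{*}\to H^{2}(\Gc^{a})\bigr)$, i.e.\ iff there is $\lambda\colon\Gc^{(2)}/\Gc^{(3)}\to\Z/\ell$ whose associated data (the alternating form $\lambda\circ[\bullet,\bullet]$ on $\Gc^{a}$, together with $\lambda\circ(\bullet)^{\ell}$) match those of $f\cup g$. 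The bridge between the two reformulations is linear algebra on $\Lambda^{2}\Gc^{a}$: $\lambda$ kills the image of $N$ iff $\lambda\circ[\bullet,\bullet]$ vanishes on $H_{f}\times H_{f}$ and on $H_{g}\times H_{g}$, where $H_{f}=\ker\bar f$ and $H_{g}=\ker\bar g$ are the two distinct hyperplanes, and since $H_{f}+H_{g}=\Gc^{a}$ this forces $\lambda\circ[\bullet,\bullet]=c\,(f\wedge g)$; using that $\Gc^{(2)}/\Gc^{(3)}$ is generated by commutators and $\ell$-th powers one rules out $c=0$, and after rescaling to $c=1$ the push-out of $\Gc^{c}$ along $\lambda$ is exactly the extension with class $f\cup g$. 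Hence ``$N$ proper in $\Gc^{(2)}$'' $\Leftrightarrow$ ``$f\cup g=0$ in $H^{2}(\Gc^{c})$'', i.e.\ $\neg(3)\Leftrightarrow\neg(2)$. (One direction can also be seen concretely: a nonzero $\lambda$ as above produces a surjection $\Gc^{c}\to\Hb$ which sends $\ker f$ and $\ker g$ into abelian subgroups, hence kills $N$ but not $\Gc^{(2)}$.) The pro-$\ell$ variant is cleaner, since $\widehat H^{2}(\Gc^{ab})=\Lambda^{2}(\Gc^{ab})^{*}$ has no Bockstein part and only the alternating form has to be matched.

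The step I expect to be the real obstacle is the bookkeeping when $\ell=2$: there $(\bullet)^{\ell}=(\bullet)^{2}\colon\Gc^{a}\to\Gc^{(2)}/\Gc^{(3)}$ is not linear but quadratic with polarization $[\bullet,\bullet]$ (and $(\bullet)^{\beta}=0$), so the identification of $f\cup g$ inside $H^{2}(\Gc^{a})$, and the verification that the push-out along the constructed $\lambda$ equals it, must be carried out in the language of $\Z/2$-valued quadratic forms on $\Gc^{a}$, using that two such forms with the same polarization differ by a linear form. This is precisely the ``non-trivial Bockstein'' subtlety flagged at the start of the section, and is the reason the argument is written out in the mod-$\ell$ case.
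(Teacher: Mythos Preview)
Your sketch is correct and follows a genuinely different route from the paper, so it is worth contrasting the two.

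For $(1)\Leftrightarrow(2)$ the paper compares the five-term exact sequences of the two extensions $1\to\Gc^{(2)}\to\Gc\to\Gc^a\to1$ and $1\to\Gc^{(2)}/\Gc^{(3)}\to\Gc^c\to\Gc^a\to1$ and observes that the inflation map identifies $H^1(\Gc^{(2)}/\Gc^{(3)})^{\Gc^a}$ with $H^1(\Gc^{(2)})^{\Gc}$, so the two transgressions have the same image in $H^2(\Gc^a)$. Your Heisenberg--lifting argument bypasses the spectral sequence entirely: since $\Hb^{(3)}=1$, every lift $\Gc\to\Hb$ factors through $\Gc^c$, so the two obstruction problems coincide. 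This is shorter and more transparent; the paper's argument has the advantage of being uniform in the cohomology class and not tied to rank-two quotients.

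For $(2)\Leftrightarrow(3)$ the paper first reduces to $\Gc=\Gc^c$, then chooses a free presentation $S\twoheadrightarrow\Gc$ and a basis $(\sigma_i)$ with $x_1=f$, $x_2=g$, and invokes Proposition~\ref{prop:central-kernel-pairig-with-H-2} to show $x_1\cup x_2\neq 0$ iff some relation $t\in T$ has $a_{12}(t)\neq 0$, which it rewrites as $[\sigma_1,\sigma_2]\in(\ker f)^{(2)}(\ker g)^{(2)}$. Your argument is the coordinate-free version of the same duality: you make the reduction step explicit by proving $\Gc^{(3)}\leq N$ (which the paper asserts only implicitly when it says ``assume $\Gc=\Gc^c$''), then identify the annihilator of $N$ in $(\Gc^{(2)}/\Gc^{(3)})^\ast$ with the line $\Z/\ell\cdot(f\cup g)$ inside $H^2(\Gc^a)$ via transgression. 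Your ``concrete'' direction $\neg(2)\Rightarrow\neg(3)$ via the map to $\Hb$ is also fine: for the extension classified by $u_1\cup u_2$ the preimages of the two coordinate lines are elementary abelian even when $\ell=2$, so $(\ker f)^{(2)}$ and $(\ker g)^{(2)}$ both die in $\Hb$ while $\Gc^{(2)}$ surjects onto $Z(\Hb)$.

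One caution for the pro-$\ell$ statement: your remark that the $\Z_\ell$-case is ``cleaner'' skips a point the paper does address. The hypothesis is only that $f,g$ are $\Z_\ell$-independent, not that they extend to a basis of $\widehat H^1(\Gc)$; in particular $(f,g)\colon\Gc\to\Z_\ell^2$ need not be surjective, so $\Gc=\ker f\cdot\ker g$ is not immediate. The paper handles this by noting (via the perfect pairing of Proposition~\ref{prop:central-kernel-pairig-with-H-2}) that $f\cup g\neq 0$ is unchanged under replacing $f$ by $\ell^n f$, and that $\ker f$ depends only on the $\Z_\ell$-line through $f$, so one may reduce to $f,g\notin\ell\cdot\widehat H^1(\Gc)$. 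You should insert the same reduction before running your argument in the pro-$\ell$ case.
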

\begin{proof}
The equivalence of $(1)$ and $(3)$ is well known in the mod-$\ell$ case -- see for example \cite{Linnell1991} Theorem 1 to deduce this for the mod-$\ell$ case (after a limit argument).
Alternatively, this can be deduced from Proposition \ref{prop:central-kernel-pairig-with-H-2} along with the discussion which precedes it (we use this extensively with no mention in the proof below).

Clearly, $(1) \Rightarrow (2)$.
Let us show $(2) \Rightarrow (1)$.
Take $f,g \in H^1(\Gc) = H^1(\Gc^c) = H^1(\Gc^a)$ and suppose that $f \cup g$ is in the kernel of the inflation map $H^2(\Gc^a) \rightarrow H^2(\Gc)$.
Considering the spectral sequence associated to the extension $1 \rightarrow \Gc^{(2)} \rightarrow \Gc \rightarrow \Gc^a \rightarrow 1$, we deduce that there exists $\xi \in H^1(\Gc^{(2)})^\Gc$ such that $d_2(\xi) = f \cup g$.
On the other hand, $H^1(\Gc^{(2)}/\Gc^{(3)})^\Gc \cong H^1(\Gc^{(2)})^{\Gc^a}$ via the inflation map.
Thus, considering the spectral sequence associated to the extension $1 \rightarrow \Gc^{(2)}/\Gc^{(3)} \rightarrow \Gc^c \rightarrow \Gc^a \rightarrow 1$, we deduce that $f \cup g \in H^2(\Gc^c)$ is precisely $d_2(\xi)$ when $\xi$ is considered as an element of $H^1(\Gc^{(2)}/\Gc^{(3)})^{\Gc^a}$.
In particular, $f \cup g$ is in the kernel of $H^2(\Gc^a) \rightarrow H^2(\Gc^c)$.

To show the other equivalence, it suffices to assume with no loss that $\Gc^c = \Gc$.
Denote by $x_1 = f$ and $x_2 = g$ and complete $x_1,x_2$ to a $\Z/\ell$-basis $(x_i)_i$ of $H^1(\Gc) = \Hom(\Gc,\Z/\ell)$.
We obtain a dual minimal generating set $(\sigma_i)_i$ for $\Gc^a$ which converges to $1$ such that $\ker x_j = \langle \sigma_i \rangle_{i \neq j}$.
Choose a corresponding free presentation $S \twoheadrightarrow \Gc$ where $S$ is the free pro-$\ell$ group on $(\sigma_i)_i$ such that $S^a \xrightarrow{\cong} \Gc^a$ is an isomorphism, and denote by $T$ the kernel of $S \rightarrow \Gc$.

Assume that $x_1 \cup x_2 \neq 0 \in H^2(\Gc)$.
Then there exists $t \in T$ such that $a_{12}(t) \neq 0$.
We can assume without loss that $a_{12}(t) = 1$.
Writing $t = [\sigma_1,\sigma_2] \cdot \rho^{-1}$, $\rho \in S^{(2)}$ with $a_{12}(\rho) = 0$, we deduce that $[\sigma_1,\sigma_2] = \rho$ when considered as elements of $\Gc$.
Thus, we deduce $(3)$.

Conversely, assume $(3)$.
Then there exists $\rho \in S^{(2)}$ such that $a_{12}(\rho) = 0$ and $[\sigma_1,\sigma_2] \cdot \rho^{-1} = t \in T$.
But then $x_1 \cup x_2 \neq 0$ since $a_{12}(t) \neq 0$.

The pro-$\ell$ case is similar.
Indeed, the proof of the equivalence of (1) and (2) is virtually the same.
To show the equivalence of (2) and (3) using a similar argument as above, we note that if $f,g \in \widehat H^1(\Gc) \smallsetminus \ell \cdot \widehat H^1(\Gc)$ are $\Z_\ell$-independent, then, for any $n \geq 0$, one has $f \cup g \neq 0$ if and only if $\ell^n \cdot (f \cup g) = (\ell^n \cdot f) \cup g \neq 0$.
This can be easily deduced from Proposition \ref{prop:central-kernel-pairig-with-H-2} since $f,g$ can be extended to a $\Z_\ell$-basis for $\widehat H^1(\Gc)$; then, in the notation of the proposition, the homomorphism $(\bullet,f \cup g)$ is non-trivial if and only if $(\bullet, \ell \cdot (f \cup g))$ is non-trivial since $\Z_\ell$ is torsion-free.
In particular, to show the equivalence of (2) and (3), we can assume that $f,g$ are actually elements of $\widehat H^1(\Gc) \smallsetminus \ell \cdot \widehat H^1(\Gc)$ so that they can be extended to a $\Z_\ell$-basis of $\widehat H^1(\Gc)$, and then proceed in the same way as the mod-$\ell$ case.
\end{proof}

\begin{defn}
\label{defn:ACL}
Let $\Gc$ be a pro-$\ell$ group.
A subgroup $A \leq \Gc^a$ is called {\bf almost commuting-liftable} (or {\bf ACL} for short), if for all $\sigma,\tau \in A$ one has $[\sigma,\tau] \in A^\beta$.

Let $A \leq \Gc^a$ be any subgroup.
We define $\Ib(A) \leq A$ as 
\[ \Ib(A) = \{ \sigma \in A \ : \ \forall \tau \in A, \ [\sigma,\tau] \in A^\beta \}. \]
Since $[\bullet,\bullet]$ is bilinear and $(\bullet)^\beta$ is linear (recall that $(\bullet)^\beta$ is trivial if $\ell = 2$), $\Ib(A)$ is a subgroup of $A$.
Observe that if $\ell = 2$, $\Ib(A)$ denotes the set of all $\sigma \in A$ such that, for all $\tau \in A$, one has $[\sigma,\tau] = 0$.
Moreover, $A = \Ib(A)$ if and only if $A$ is ACL.

A subgroup $\widehat A \leq \Gc^{ab}$ is called {\bf commuting-liftable} (or {\bf CL} for short), if for all $\sigma,\tau \in \widehat A$ one has $[\sigma,\tau] = 0$.
Let $\widehat A \leq \Gc^{ab}$ be any subgroup.
We define $\Ib(\widehat A) \leq \widehat A$ as 
\[ \Ib(\widehat A) = \{ \sigma\in \widehat A \ : \ \forall \tau \in \widehat A, \ [\sigma,\tau] = 0 \}. \]
As above, $\Ib(\widehat A)$ is a subgroup of $\widehat A$ and $\Ib(\widehat A) = \widehat A$ if and only if $\widehat A$ is CL.

Note that, in the case where $\widehat A = \langle \sigma,\tau \rangle$ has rank 2, the statement ``$\widehat A$ is CL'' is equivalent to ``$\sigma,\tau$ are commuting-liftable'' as defined in \cite{Bogomolov2007} and \cite{Pop2010}.
\end{defn}

\subsection{Galois Cohomology}
\label{sec:galois-cohomology}

Throughout this subsection, $K$ will denote a field such that $\Char K \neq \ell$ and $\mu_\ell \subset K$ (unless otherwise noted).
We will denote by $\Gc_K = \Gal(K(\ell)|K)$ the maximal pro-$\ell$ Galois group of $K$.
And we denote by $\Gc_K^a = \Gc_K/\Gc_K^{(2)}$ and $\Gc_K^c = \Gc_K/\Gc_K^{(3)}$ as defined above.

In order to avoid notational confusion, we will generally denote by:
\[ \Pi_K^a = \Gc_K^{ab} = \Gc_K/\Gc_K^{(2,\infty)}, \ \ \Pi_K^c = \Gc_K^C = \Gc_K/\Gc_K^{(3,\infty)} \]
in order to stay in line with the notation from the introduction and from \cite{Pop2010}.

Our fixed isomorphism $\mu_\ell \cong \Z_\ell$ allows us to explicitly express the Bockstein morphism $\beta : H^1(G_K,\Z/\ell) \rightarrow H^2(G_K,\Z/\ell)$ using Milnor K-theory as follows.
The cup product $H^1(G_K,\Z/\ell) \otimes \mu_\ell \xrightarrow{\cup} H^2(G_K,\mu_\ell)$ is precisely the map $\beta \cup {\bf 1}$ (cf. \cite{Efrat2011b} Proposition 2.6 and/or the proof of \cite{koch2002} Theorem 8.13).
Recall that $\omega$ denotes the fixed element of $\mu_\ell$ which corresponds to $1 \in \Z/\ell$ under our isomorphism.
This isomorphism induces isomorphisms $H^1(G_K,\Z/\ell) \cong H^1(G_K,\mu_\ell) \cong K_*^M(K)/\ell$ and $H^2(G_K,\Z/\ell) \cong H^2(G_K,\mu_{\ell}^{\otimes 2}) \cong K_2^M(K)/\ell$.
Under these induced isomorphisms, we deduce that the Bockstein morphism $H^1(G_K,\Z/\ell) \rightarrow H^2(G_K,\Z/\ell)$ corresponds to the map $K_1^M(K)/\ell \rightarrow K_2^M(K)/\ell$ defined by $x \mapsto \{x,\omega\}$.
Namely, the following diagram commutes:
\[
\xymatrix
{
K_1^M(K)/\ell \ar[r]^{\cong} \ar[d]_{x \mapsto \{x,\omega\}} & H^1(K,\mu_\ell) \ar[r]^{\cong} \ar[d]^{\text{induced}} & H^1(G_K) \ar[d]^{\beta} \\ 
K_2^M(K)/\ell \ar[r]^{\cong}  & H^1(K,\mu_\ell^{\otimes 2}) \ar[r]^{\cong} & H^2(G_K)
}
\]
where the isomorphisms on the left are canonical given by the Galois symbol, while the isomorphisms on the right are induced by our fixed isomorphism $\mu_\ell = \langle \omega \rangle \cong \Z/\ell$.
We will use this fact in the remainder of the paper without reference to this commutative diagram.

\begin{prop}
\label{prop:acl-valuation-prop}
Let $(K,v)$ be a valued field such that $\Char K \neq \ell$ and $\mu_{2\ell} \subset K$.
Denote by $D_\mu = \Gal(K'|K(\sqrt[\ell]{1+\mf_v},\sqrt[\ell]{\mu_\ell}))$.
Observe that $I_v^1 \leq D_\mu \leq D_v^1$.
Let $\sigma \in I_v^1$, $\tau' \in D_\mu$ and $\tau \in D_v^1 \smallsetminus D_\mu$.
Then:
\begin{itemize}
\item $[\tau',\sigma] = 0$.
\item $[\tau,\sigma] \in  \langle\sigma^\beta \rangle$.
\end{itemize}

Assume furthermore that $\mu_{\ell^\infty} \subset K$.
Let $\sigma \in \widehat I_v^1$ and $\tau \in \widehat D_v^1$.
Then $[\sigma,\tau] = 0$. 
\end{prop}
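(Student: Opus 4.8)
The plan is to extract $[\sigma,\tau]$ from the perfect pairing of Proposition \ref{prop:central-kernel-pairig-with-H-2}, whose hypotheses are available for $\Gc_K$ by Remark \ref{rem:merk-sus}. The commutator $[\sigma,\tau]$ lies in $\Gc_K^{(2,\infty)}/\Gc_K^{(3,\infty)}$ in the pro-$\ell$ case (resp. $\Gc_K^{(2)}/\Gc_K^{(3)}$ in the mod-$\ell$ case), and that pairing identifies this group with the annihilator of $\ker\bigl(\mathrm{infl}\colon\widehat H^2(\Pi_K^a)\to\widehat H^2(\Gc_K)\bigr)$ (resp. $\ker\bigl(\mathrm{infl}\colon H^2(\Gc_K^a)\to H^2(\Gc_K)\bigr)$); concretely $[\sigma,\tau]=0$ if and only if it pairs trivially with every element of that kernel. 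The Merkurjev--Suslin theorem identifies $\widehat H^2(\Gc_K)$ with $\widehat K_2^M(K)$ (resp. $H^2(\Gc_K)$ with $K_2^M(K)/\ell$), so this kernel is topologically generated by the Steinberg symbols $a\wedge(1-a)$, $a\in K^\times\smallsetminus\{1\}$ --- together, in the mod-$\ell$ case, with the Bockstein corrections $b\wedge\omega-\beta b$ coming from the formula $\beta b=\{b,\omega\}$ and from the Bockstein summand of $H^2$ of an elementary abelian $\ell$-group. Finally, under the Kummer pairing $\langle\,\cdot\,,\,\cdot\,\rangle$ on $\widehat K\times\Pi_K^a$ (resp. $K^\times/\ell\times\Gc_K^a$), the element $[\sigma,\tau]$ pairs with $a\wedge c$ as the determinant $\langle a,\sigma\rangle\langle c,\tau\rangle-\langle c,\sigma\rangle\langle a,\tau\rangle$, by the explicit description of the pairing in Proposition \ref{prop:central-kernel-pairig-with-H-2}.

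Next I would translate the hypotheses into Kummer orthogonality: since $\widehat I_v^1=\overline{\langle\Oc_v^\times\rangle}^\perp$ and $\widehat D_v^1=\overline{\langle 1+\mf_v\rangle}^\perp$ (and similarly mod $\ell$), we have $\langle u,\sigma\rangle=0$ for all $u\in\Oc_v^\times$ and $\langle y,\tau\rangle=0$ for all $y\in 1+\mf_v$; moreover $-1\in K^{\times\ell}$ since $\mu_{2\ell}\subset K$ (in the pro-$\ell$ case $-1$ even maps to $0$ in $\widehat K$), so $\langle-1,\,\cdot\,\rangle\equiv 0$, and $\omega\in\mu_\ell\subset\Oc_v^\times$, so $\langle\omega,\sigma\rangle=0$. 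The core step is then purely valuation-theoretic: for each $a\in K^\times\smallsetminus\{1\}$ the determinant $d(a):=\langle a,\sigma\rangle\langle 1-a,\tau\rangle-\langle 1-a,\sigma\rangle\langle a,\tau\rangle$ vanishes, by a three-case analysis on $v(a)$. If $v(a)>0$ then $1-a\in 1+\mf_v\subset\Oc_v^\times$, killing both terms with $\langle 1-a,\,\cdot\,\rangle$; if $v(a)=0$ then $\langle a,\sigma\rangle=0$, and either $a\in 1+\mf_v$ (when $v(1-a)>0$) so $\langle a,\tau\rangle=0$, or $1-a\in\Oc_v^\times$ (when $v(1-a)=0$) so $\langle 1-a,\sigma\rangle=0$; if $v(a)<0$ then $1-a=(-a)(1-a^{-1})$ with $1-a^{-1}\in 1+\mf_v\subset\Oc_v^\times$, whence $\langle 1-a,\,\cdot\,\rangle=\langle a,\,\cdot\,\rangle$ on both $\sigma$ and $\tau$ (using $\langle-1,\,\cdot\,\rangle=0$) and $d(a)=0$. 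Since the determinant pairing is $\Z_\ell$-bilinear and continuous it annihilates the whole kernel; in the pro-$\ell$ case $\ker(\mathrm{infl})$ is topologically generated by Steinberg symbols alone, so this already yields $[\sigma,\tau]=0$, the final assertion.

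For the two mod-$\ell$ bullets I would additionally pair $[\tau,\sigma]$ against the correction generators $b\wedge\omega-\beta b$. A commutator has trivial $\ell$-th-power component --- in the notation of Proposition \ref{prop:central-kernel-pairig-with-H-2} its coefficients $b_r$ all vanish, by $\Z/\ell$-bilinearity of $[\,\cdot\,,\,\cdot\,]$ (Fact \ref{fact:bilinear-linear}) --- so $\langle[\tau,\sigma],\beta b\rangle=0$ and hence $\langle[\tau,\sigma],\,b\wedge\omega-\beta b\rangle=\langle b,\tau\rangle\langle\omega,\sigma\rangle-\langle\omega,\tau\rangle\langle b,\sigma\rangle=-\langle\omega,\tau\rangle\langle b,\sigma\rangle$, using $\langle\omega,\sigma\rangle=0$. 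When $\tau'\in D_\mu$ we have $\langle\omega,\tau'\rangle=0$, so all pairings vanish and $[\tau',\sigma]=0$. When $\tau\in D_v^1\smallsetminus D_\mu$, so $\langle\omega,\tau\rangle\neq 0$, I would compare with $\sigma^\beta=\sigma^\ell$: by the same formulae (and linearity of $(\,\cdot\,)^\beta$, Fact \ref{fact:bilinear-linear}) its pairing is trivial on Steinberg symbols and equals $\langle b,\sigma\rangle$ on each $b\wedge\omega-\beta b$, so on every generator of $\ker(\mathrm{infl})$ one has $\langle[\tau,\sigma],\,\cdot\,\rangle=-\langle\omega,\tau\rangle\,\langle\sigma^\beta,\,\cdot\,\rangle$; perfectness then forces $[\tau,\sigma]=-\langle\omega,\tau\rangle\,\sigma^\beta\in\langle\sigma^\beta\rangle$. (For $\ell=2$ one has $\mu_4\subset K$, hence $D_\mu=D_v^1$ and $\sigma^\beta=0$, so only the first bullet is non-vacuous, and the argument above goes through with the symmetric-square description of $H^2(\Gc_K^a)$.)

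I expect the main obstacle to be the cohomological bookkeeping rather than the valuation theory: establishing that $\ker(\mathrm{infl})$ is topologically generated by Steinberg symbols (and, mod $\ell$, Bockstein corrections), which rests on Merkurjev--Suslin together with the standard presentation of $K_2^M$ and, in the pro-$\ell$ case, a mild exactness argument for $\ell$-adic completion; and keeping all the identifications consistent --- the commutator pairing, the formula $\beta=\{\,\cdot\,,\omega\}$, and the choice of generating sets making the coefficients $a_{ij}$ and $b_r$ transparent --- which is precisely what Proposition \ref{prop:central-kernel-pairig-with-H-2} and the discussion surrounding it are set up to provide.
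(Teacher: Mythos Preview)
Your proposal is correct. Both your argument and the paper's hinge on Proposition~\ref{prop:central-kernel-pairig-with-H-2} and the Merkurjev--Suslin input (Remark~\ref{rem:merk-sus}), but the packaging differs. The paper sets $T=\langle\sigma,\tau'\rangle^\perp$, observes via the easy direction of Theorem~\ref{thm:rigid-to-valuative} that $T$ is rigid (since $1+\mf_v\subset T$ and $\Oc_v^\times\subset\sigma^\perp$ with cyclic quotient), invokes Lemma~\ref{lem:rigid-iff-wedge} to conclude $K_2^M(K)/T\cong\wedge^2_T K$ is one-dimensional, and then dualizes through the perfect pairing to place $[\tau',\sigma]$ inside $R$; the Bockstein contribution is handled by noting $\omega\in T$. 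You instead unfold this directly: you pair $[\sigma,\tau]$ against the explicit generators of $\ker(\mathrm{infl})$ and kill each Steinberg symbol by a three-case analysis on $v(a)$, then treat the Bockstein corrections $b\wedge\omega-\beta b$ separately. Your case analysis on $v(a)$ is exactly the content of the easy implication $(1)\Rightarrow(2)$ in Theorem~\ref{thm:rigid-to-valuative}, just carried out inline rather than cited. So the two proofs are doing the same work; the paper's is terser because it reuses the rigidity language already developed in \S\ref{sec:rigid-elements}, while yours is more self-contained and makes the determinant formula and the role of $\langle\omega,\tau\rangle$ fully explicit --- in particular your computation yields the sharper identity $[\tau,\sigma]=-\langle\omega,\tau\rangle\,\sigma^\beta$ rather than merely membership in $\langle\sigma^\beta\rangle$.
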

\begin{proof}
Pick a free presentation $S \twoheadrightarrow \Gc_K$ which induces an isomorphism $S^a \rightarrow \Gc_K^a$ and denote by $R$ the kernel of the induced map $S^c \twoheadrightarrow \Gc_K^c$.

First, denote by $T = \langle \sigma,\tau' \rangle^\perp$.
Assume without loss of generality that $\sigma$ and $\tau'$ are $\Z/\ell$-independent.
Moreover, we can complete these to a basis of $\Gc^a$, pick lifts in $\Gc$ and take $S \rightarrow \Gc$ to be the corresponding free presentation -- namely, we consider $\sigma,\tau'$ as elements of $S$ as well.
By Theorem \ref{thm:rigid-to-valuative} we deduce that $T$ is rigid; moreover, by Lemma \ref{lem:rigid-iff-wedge}, we deduce that $K_2^M(K)/T$ is a 1-dimensional quotient of $K_2^M(K)/\ell$.
Consider the perfect pairing $R \times H^2(K) \rightarrow \Z/\ell$ of Proposition \ref{prop:central-kernel-pairig-with-H-2}.
The Galois symbol $K_2^M(K)/\ell \rightarrow H^2(K)$ is an isomorphism and so the 1-dimensional quotient $K_2^M(K)/T$ corresponds to a 1-dimensional subgroup $R_T$ of $R$ via this pairing.
Denote by $\omega$ the fixed element of $\mu_\ell \subset K$ which corresponds to $1 \in \Z/\ell$ under our fixed isomorphism $\Z/\ell \cong \mu_\ell$ of $G_K$-modules.
As $\omega \in T$ and the Bockstein map $H^1(K) \rightarrow H^2(K)$ corresponds to the map $x \mapsto \{x,\omega\}$, we deduce, using the properties of the pairing $R \times H^2(K) \rightarrow \Z/\ell$ as described in Proposition \ref{prop:central-kernel-pairig-with-H-2}, that $[\tau',\sigma]$ is a generator of the 1-dimensional subgroup $R_T$.
In particular, $[\tau',\sigma] = 0$ in $\Gc_K^c$.

For the second statement, consider $T =  \langle \sigma,\tau \rangle^\perp \subset T' = \langle \sigma \rangle^\perp$. 
The rest of the argument is similar to that given above using the fact that $T' = \langle \omega, T \rangle$ and that $x \mapsto \{x,\omega\}$ corresponds to the Bockstein map in cohomology.
The pro-$\ell$ case is similar, and in fact easier since the assumption $\mu_{\ell^\infty} \subset K$ implies the triviality of the Bockstein morphism.
\end{proof}

\begin{lem}
\label{lem:acl-rigid-equiv-of-Z}
Let $K$ be a field such that $\Char K \neq \ell$ and $\mu_{2\ell} \subset K$.
Let $A \leq \Gc_K^a$ be given.
Then $(\Ib(A))^\perp = \Hb(A^\perp)$.

Assume furthermore that $\mu_{\ell^\infty} \subset K$.
Let $\widehat A \leq \Gc_K^{ab} = \Pi_K^a$ be given and assume that $\Pi_K^a/\widehat A$ is torsion-free.
Then $(\Ib(\widehat A))^\perp = \Hb(\widehat A^\perp)$.
\end{lem}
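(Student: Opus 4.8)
The plan is to read off each side of the claimed identity through Kummer theory and the Galois symbol, and to match them in the lattice of subgroups of $K^\times$ lying over $T:=A^\perp$. For $x\in K^\times$ write $\chi_x\in\Hom(\Gc_K^a,\Z/\ell)$ for the associated Kummer character, and for $\sigma\in\Gc_K^a$ put $H_\sigma:=\langle\sigma\rangle^\perp$, so that $\Kl\leq H_\sigma\leq K^\times$, $K^\times/H_\sigma$ is cyclic, $H_\sigma\cap H_\tau=\langle\sigma,\tau\rangle^\perp$, and $\bigcap_{\sigma\in A}H_\sigma=T$. By Kummer duality, every $H$ with $T\leq H\leq K^\times$ and $K^\times/H$ cyclic is either $K^\times$ or $H_\tau$ for some $\tau\in A$. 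Finally, any $H\leq K^\times$ with $K^\times/H$ cyclic is automatically rigid (a subgroup of a cyclic group is cyclic); in particular $K^\times$ and every $H_\sigma$ are rigid.

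The crux is a pointwise translation: for $\sigma,\tau\in\Gc_K^a$, one has $[\sigma,\tau]\in\langle\sigma,\tau\rangle^\beta$ — equivalently, $[\sigma,\tau]\in A^\beta$ for some, hence any, subgroup $A$ containing $\sigma$ and $\tau$ — if and only if $\langle\sigma,\tau\rangle^\perp$ is rigid. I would prove this using the perfect pairing $R\times H^2(\Gc_K)\to\Z/\ell$ of Proposition \ref{prop:central-kernel-pairig-with-H-2} together with the Merkurjev--Suslin theorem (Remark \ref{rem:merk-sus}), which identifies $H^2(\Gc_K)$ with $K_2^M(K)/\ell$ and hence with $\wedge^2(K^\times/\ell)$ modulo the Steinberg relations $\{x,1-x\}$ (here $-1\in\Kl$ is used). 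Choosing a free presentation $S\twoheadrightarrow\Gc_K$ inducing an isomorphism $S^a\cong\Gc_K^a$ and unwinding the formulas of Proposition \ref{prop:central-kernel-pairig-with-H-2}, the class $[\sigma,\tau]$ pairs with a symbol $\{x,y\}$ through the Kummer determinant $\chi_x(\sigma)\chi_y(\tau)-\chi_y(\sigma)\chi_x(\tau)$ and with a Bockstein class $\beta\chi_z=\{z,\omega\}$ through $0$. Thus $[\sigma,\tau]-\rho^\beta$ dies in $\Gc_K^c$ exactly when it annihilates $\ker\bigl(H^2(\Gc_K^a)\to H^2(\Gc_K)\bigr)$; taking $\rho=\chi_\omega(\sigma)\,\tau-\chi_\omega(\tau)\,\sigma\in\langle\sigma,\tau\rangle$ absorbs the Bockstein part of this kernel, and what survives is precisely that the Kummer determinant vanish on all Steinberg relations, i.e. that $\langle x,1-x\rangle$ be cyclic modulo $\langle\sigma,\tau\rangle^\perp$ for every $x\in K^\times\smallsetminus\langle\sigma,\tau\rangle^\perp$ — exactly the rigidity of $\langle\sigma,\tau\rangle^\perp$. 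When $\sigma,\tau$ are $(\Z/\ell)$-dependent both sides hold trivially, the right because $\langle\sigma,\tau\rangle^\perp$ then has cyclic cokernel.

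With the translation in hand the rest is formal. By the first paragraph, the subgroups $H$ and $H'$ appearing in Definition \ref{defn:Z-of-T} are $K^\times$ or the $H_\tau$ with $\tau\in A$; since $K^\times$ and all $H_\tau$ are rigid and $H_\sigma\cap H_\tau=\langle\sigma,\tau\rangle^\perp$, the subgroup $H_\sigma$ occurs in the intersection defining $\Hb(T)$ if and only if $\langle\sigma,\tau\rangle^\perp$ is rigid for all $\tau\in A$, i.e. (by the translation) $[\sigma,\tau]\in A^\beta$ for all $\tau\in A$, i.e. $\sigma\in\Ib(A)$; and $K^\times$ itself always occurs. Therefore $\Hb(T)=K^\times\cap\bigcap_{\sigma\in\Ib(A)}H_\sigma=\bigcap_{\sigma\in\Ib(A)}\langle\sigma\rangle^\perp=(\Ib(A))^\perp$, using that $\Ib(A)$ is a closed subgroup; that is, $(\Ib(A))^\perp=\Hb(A^\perp)$.

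The pro-$\ell$ case follows the same path with $\Z_\ell$, $\widehat K$ and ``torsion-free'' in place of $\Z/\ell$, $K^\times/\ell$ and ``cyclic'', using the pro-$\ell$ halves of Proposition \ref{prop:central-kernel-pairig-with-H-2} and Definition \ref{defn:Z-of-T} and the hypothesis that $\Pi_K^a/\widehat A$ is torsion-free; it is in fact slightly simpler, since $\mu_{\ell^\infty}\subset K$ forces the Bockstein to vanish, so $\Ib(\widehat A)$ is cut out by $[\sigma,\tau]=0$ directly and no correction term $\rho$ is needed. The one genuinely delicate step — and the main obstacle — is the pointwise translation of the second paragraph: the careful bookkeeping of the Bockstein contribution to $\ker\bigl(H^2(\Gc_K^a)\to H^2(\Gc_K)\bigr)$ and of the $\ell=2$ degeneracy of $(\bullet)^\beta$; everything after that is manipulation in the subgroup lattice between $T$ and $K^\times$.
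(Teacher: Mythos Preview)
Your proof is correct and takes a genuinely different route from the paper's. The paper proves the two inclusions separately and asymmetrically: for $\Hb(T)\leq\Ib(A)^\perp$ it restricts cup products to a lift $\widetilde A\leq\Gc_K^c$ and invokes Lemma~\ref{lem:cup-product}, while for $\Ib(A)^\perp\leq\Hb(T)$ it goes through valuation theory --- Propositions~\ref{prop:rigid-kthy} and~\ref{prop:main-kthy} (hence ultimately Theorem~\ref{thm:rigid-to-valuative}) produce a valuation $v$ from $\Hb(T)$, and then Proposition~\ref{prop:acl-valuation-prop} feeds back that $I_v^1\leq D_v^1$ satisfy the ACL relations. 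Your argument bypasses the valuation-theoretic detour entirely: once you establish the pointwise translation $[\sigma,\tau]\in\langle\sigma,\tau\rangle^\beta\iff\langle\sigma,\tau\rangle^\perp$ is rigid (and its corollary that $[\sigma,\tau]\in A^\beta$ for some $A$ forces $[\sigma,\tau]\in\langle\sigma,\tau\rangle^\beta$, which holds because the Bockstein constraints in $\ker(\inf)$ pin $\rho$ down to your $\rho_0$), both inclusions fall out of the same lattice identification. This is cleaner and more elementary for the lemma itself; the paper's approach has the advantage of making the valuation $v$ explicit along the way, which it needs anyway for the theorems that follow. Your identification of the pointwise translation as the crux is exactly right, and your handling of the $\ell=2$ and Bockstein bookkeeping is on target.
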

\begin{proof}
Denote by $T = A^\perp$ and $H = \Ib(A)^\perp$.
Take $(\sigma_i)_i$ a minimal generating set of $A$ and pick lifts $\tilde\sigma_i \in \Gc_K^c$ for $\sigma_i \in A \leq \Gc_K^a$; denote by $\widetilde A := \langle \tilde\sigma_i \rangle_i$ -- observe that $\widetilde A \cap (\Gc_K^c)^{(2)} = \widetilde A^{(2)}$.
Consider the commutative diagram:
\[
\xymatrix{
H^1(\Gc_K^c) \otimes H^1(\Gc_K^c) \ar[d]_{\res \otimes \res} \ar[r]^-{\cup} & H^2(\Gc_K^c) \ar[d]^{\res} \\
H^1(\widetilde A) \otimes H^1(\widetilde A)  \ar[r]^-{\cup} & H^2(\widetilde A) 
}
\]
If we identify $H^1(\Gc_K^c) = H^1(\Gc_K^a)$ with $K^\times/\ell$ and $H^1(\widetilde A) = H^1(A) = K^\times/T$ via our isomorphism $\mu_\ell \cong \Z/\ell$ and Kummer theory, we note using Lemma \ref{lem:cup-product} with $\Gc = \Gc_K$ that the top map factors via $K_2^M(K)/\ell$; thus the bottom map factors via $K_2^M(K)/T$.
Let $x \in H^1(A)$ and $y \in H^1(A)$ such that $y \notin \ker(H^1(A) \rightarrow H^1(\Ib(A))$ be given and assume that $x,y$ are $(\Z/\ell)$-independent.
Then the cup product $\inf(x \cup y) \neq 0 \in H^2(\widetilde A)$ by Lemma \ref{lem:cup-product}.
From this we deduce that $\Hb(T) \leq H$ using the alternative characterization of $\Hb(T)$ of Remark \ref{rem:ZofT}.

It follows from Proposition \ref{prop:acl-valuation-prop}, Lemma \ref{lem:cup-product} and Proposition \ref{prop:rigid-kthy} resp. \ref{prop:main-kthy} in the case where $T=\Hb(T)$ resp. $T \neq \Hb(T)$, that $H \leq \Hb(T)$.
The pro-$\ell$ case is essentially identical.
\end{proof}

\subsection{Main Results}
\label{sec:main-results-acl}

All the work has already been done and all that is left is to put everything together in the context where $K$ is a field such that $\Char K \neq \ell$ and $\mu_{2\ell} \subset K$ resp. $\mu_{\ell^\infty} \subset K$.
The theorems presented in this subsection are merely the Kummer duals of the main results from \S~\ref{sec:rigid-elements}.
They are explicitly presented here as they show how to detect valuation-theoretical data using the group theoretical data encoded in $\Gc_K^c$ resp $\Pi_K^c$.
The key in converting the main results of \S \ref{sec:rigid-elements} via Kummer theory, is Lemma \ref{lem:acl-rigid-equiv-of-Z}.

\begin{thm}
\label{thm:acl-groups}
Let $K$ be a field such that $\Char K \neq \ell$ and $\mu_{2\ell} \subset K$.
Let $Z \leq \Gc_K^a$ be given and assume that $Z = \Ib(Z)$.
Then there exists $B \leq Z$ such that $Z/B$ is cyclic (possibly trivial), $B^\perp = H$ is valuative and, denoting $v = v_H$, one has:
\begin{itemize}
\item $Z \leq D_v^1$.
\item $B \leq I_v^1$.
\end{itemize}

Assume furthermore that $\mu_{\ell^\infty} \subset K$.
Let $\widehat Z \leq \Gc_K^{ab} = \Pi_K^a$ be given; assume that $\Pi_K^a / \widehat Z$ is torsion-free and $\widehat Z = \Ib(\widehat Z)$.
Then there exists $\widehat B \leq \widehat Z$ such that $\widehat Z/\widehat B$ is cyclic (possibly trivial), $\widehat B^\perp = \widehat H$ is valuative and, denoting $v = v_{\widehat H}$, one has:
\begin{itemize}
\item $\widehat Z \leq \widehat D_v^1$.
\item $\widehat B \leq \widehat I_v^1$. 
\end{itemize}
\end{thm}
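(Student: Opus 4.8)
The plan is to translate Theorem~\ref{thm:acl-groups} into a statement about subgroups of $K^\times$ resp.\ $\widehat K$ via Kummer duality, and then invoke the main results of \S\ref{sec:rigid-elements}. The central bridge is Lemma~\ref{lem:acl-rigid-equiv-of-Z}, which tells us that $(\Ib(Z))^\perp = \Hb(Z^\perp)$ resp.\ $(\Ib(\widehat Z))^\perp = \Hb(\widehat Z^\perp)$. So first I would set $T = Z^\perp$ (so $\Kl \leq T \leq K^\times$, and $-1 \in T$ since $\mu_{2\ell} \subset K$ forces $-1 \in \Kl$). The hypothesis $Z = \Ib(Z)$ then yields $T = \Hb(T)$ by Lemma~\ref{lem:acl-rigid-equiv-of-Z}. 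This is precisely the hypothesis of Proposition~\ref{prop:rigid-kthy}.

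\textbf{Carrying it out.} Applying Proposition~\ref{prop:rigid-kthy} to $T$, I obtain $H$ with $T \leq H \leq K^\times$, $H/T$ cyclic (possibly trivial), $H$ valuative, and — writing $v = v_H$ — one has $1+\mf_v \leq T$ and $\Oc_v^\times \leq H$. Now I would dualize back via the Kummer pairing. Set $B = H^\perp \leq Z$; since $H/T$ is cyclic, the quotient $Z/B$ is cyclic (possibly trivial). The condition $1+\mf_v \leq T = Z^\perp$ dualizes to $Z \leq (1+\mf_v)^\perp = \Gal(K'|K(\sqrt[\ell]{1+\mf_v})) = D_v^1$, using the definition of $D_v^1$ recalled in the Notation subsection. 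Likewise $\Oc_v^\times \leq H = B^\perp$ dualizes to $B \leq (\Oc_v^\times)^\perp = \Gal(K'|K(\sqrt[\ell]{\Oc_v^\times})) = I_v^1$. So $B^\perp = H$ is valuative and $v = v_H$ has the two asserted properties. The pro-$\ell$ case is identical: set $\widehat T = \widehat Z^\perp$, note $\widehat K/\widehat T$ is torsion-free because $\Pi_K^a/\widehat Z$ is (this is the content of the correspondence recalled at the start of \S\ref{sec:ACL-groups}), apply Lemma~\ref{lem:acl-rigid-equiv-of-Z} to get $\Hb(\widehat T)=\widehat T$, invoke the pro-$\ell$ half of Proposition~\ref{prop:rigid-kthy}, and dualize back, using the definitions of $\widehat D_v^1$ and $\widehat I_v^1$.

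\textbf{The main obstacle.} The genuinely hard work has all been front-loaded into Lemma~\ref{lem:acl-rigid-equiv-of-Z} (which rests on Theorem~\ref{thm:rigid-to-valuative} and the cohomological computations of \S\ref{sec:gener-cohom-pro-ell}) and into Proposition~\ref{prop:rigid-kthy}; once those are in hand, this theorem is a pure bookkeeping exercise in dictionary translation. The one point requiring a little care is making sure the torsion-freeness hypotheses line up correctly in the pro-$\ell$ case — specifically that $\Pi_K^a/\widehat Z$ torsion-free is exactly what is needed for $\widehat K/\widehat Z^\perp$ to be torsion-free, so that both Lemma~\ref{lem:acl-rigid-equiv-of-Z} and the pro-$\ell$ statement of Proposition~\ref{prop:rigid-kthy} apply — but this is immediate from the stated Kummer correspondence. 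I expect no genuine obstacle beyond verifying these compatibilities.
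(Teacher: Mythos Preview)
Your proposal is correct and takes essentially the same approach as the paper: the paper's proof is the single line ``This follows from Lemma~\ref{lem:acl-rigid-equiv-of-Z} and Proposition~\ref{prop:rigid-kthy},'' and you have simply spelled out the Kummer-duality bookkeeping that makes that deduction work.
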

\begin{proof}
This follows from Lemma \ref{lem:acl-rigid-equiv-of-Z} and Proposition \ref{prop:rigid-kthy}.
\end{proof}

\begin{remark}
\label{rem:ell-is-2}
Assume that $\ell = 2$.
The added assumption $\mu_{4} \subset K$ in the previous theorem has two uses.
First, it ensures that $-1 \in K^{\times 2}$.
But also, it ensures that the Bockstein map $\beta : H^1(G_K) \rightarrow H^2(G_K)$ is trivial.

However, we can still deduce something even if $-1 \notin K^{\times 2}$ as follows.
Denote by $Z_\beta = \Gal(K'|K(\sqrt{-1}))$.
Let $Z \leq \Gc_K^a$ be given such that for all $\sigma,\tau \in Z$ one has $[\sigma,\tau] = 0$.
Then there exists a valuation $v$ such that $Z \leq D_v^1$ and $(Z_\beta \cap Z) / (Z_\beta \cap Z \cap I_v^1)$ is cyclic.
Indeed, arguing as in Lemma \ref{lem:acl-rigid-equiv-of-Z}, we deduce that $T = Z^\perp$ is rigid -- i.e. for all $x \in K^\times \smallsetminus T$ one has $T-xT \subset T \cup x T$.
Thus, denoting $T_\beta = T \cup - T = (Z \cap Z_\beta)^\perp$, we deduce that for all $x \notin T_\beta$ one has $T \pm xT \subset T \cup xT$; thus $T_\beta$ is rigid and $-1 \in T_\beta$.
In particular, there exists an $H$, $T_\beta \leq H \leq K^\times$, such that $H$ is valuative and, denoting $v = v_H$, one has $1+\mf_v \leq T_\beta$.

Finally, let us show that, in fact, $1+\mf_v \leq T$.
Let $x \notin H$ be given such that $x \in \mf_v$.
Then $1-x \in T_\beta$ and thus $1-x \in T$.
On the other hand, suppose that $v(x) > 0$ but $x \in H$.
Then there exists $y \notin H$ such that $0 < v(y) < v(x)$ (cf. Lemma \ref{lem:v_H}).
Thus, $(1-y) \in T$ and $v(y+x-xy) = v(y)$.
In particular, $y+x-xy = y \mod H$ and so $y+x-xy \notin H$.
Thus, $(1-y)(1-x) = 1-(y+x-xy) \in T$ so that $1-x \in T$ as required.
\end{remark}

\begin{thm}
\label{thm:main-acl}
Let $K$ be a field such that $\Char K \neq \ell$ and $\mu_{2\ell} \subset K$.
Let $Z \leq \Gc_K^a$ be given and assume that $\Ib(Z) \neq Z$.
Then $H = \Ib(Z)^\perp$ is valuative.
Denoting $v = v_H$ one has:
\begin{itemize}
\item $Z \leq D_v^1$.
\item $\Ib(Z) = Z \cap I_v^1$.
\end{itemize}

Assume furthermore that $\mu_{\ell^\infty} \subset K$.
Let $\widehat Z \leq \Gc_K^{ab} = \Pi_K^a$ be given; assume that $\Pi_K^a / \widehat Z$ is torsion-free and $\Ib(\widehat Z) \neq  \widehat Z$.
Then $\widehat H = \Ib(\widehat Z)^\perp$ is valuative.
Denoting $v = v_H$ one has:
\begin{itemize}
\item $\widehat Z \leq \widehat D_v^1$.
\item $\Ib(\widehat Z) = \widehat Z \cap \widehat I_v^1$.
\end{itemize}

\end{thm}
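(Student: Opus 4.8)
The plan is to deduce this theorem from Proposition \ref{prop:main-kthy} by Kummer duality, in exactly the way Theorem \ref{thm:acl-groups} is deduced from Proposition \ref{prop:rigid-kthy}; the bridge between the group-theoretic and $K$-theoretic pictures is Lemma \ref{lem:acl-rigid-equiv-of-Z}. First I would set $T = Z^\perp$, so that $\Kl \leq T \leq K^\times$, and note that $-1 \in \Kl \leq T$ because $\mu_{2\ell} \subset K$; thus $T$ lies in the range of subgroups to which Proposition \ref{prop:main-kthy} applies. By Lemma \ref{lem:acl-rigid-equiv-of-Z} we have $\Ib(Z)^\perp = \Hb(Z^\perp) = \Hb(T)$, so the group $H$ of the statement is precisely $\Hb(T)$. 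Since $A \mapsto A^\perp$ is an inclusion-reversing bijection, the hypothesis $\Ib(Z) \neq Z$ translates into $T \neq \Hb(T)$. Now Proposition \ref{prop:main-kthy} gives that $H = \Hb(T)$ is valuative, and that for $v = v_H$ one has $1+\mf_v \leq T$ and $T \cdot \Oc_v^\times = H$.

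It then remains to dualize these two conclusions back into $\Gc_K^a$. From the definitions of $D_v^1$ and $I_v^1$ together with Kummer theory, $(D_v^1)^\perp = \langle 1+\mf_v \rangle \cdot \Kl = \Uf_v^1$ (here one uses $-1 \in \Kl$) and $(I_v^1)^\perp = \Oc_v^\times \cdot \Kl = \Uf_v$. Since $1+\mf_v \leq T$ and $\Kl \leq T$, we get $\Uf_v^1 \leq T = Z^\perp$, hence $Z \leq D_v^1$. For the inertia statement, $(Z \cap I_v^1)^\perp = Z^\perp \cdot (I_v^1)^\perp = T \cdot \Oc_v^\times \cdot \Kl = T \cdot \Oc_v^\times = H = \Ib(Z)^\perp$, where the first equality is the standard fact that orthogonal complement turns intersections into products of orthogonals (valid here since in the mod-$\ell$ case everything in sight is a $\Z/\ell$-vector space), and applying Kummer duality once more yields $\Ib(Z) = Z \cap I_v^1$.

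The pro-$\ell$ case is notation-wise identical: set $\widehat T = \widehat Z^\perp \leq \widehat K$ and observe that the hypothesis ``$\Pi_K^a / \widehat Z$ torsion-free'' corresponds exactly to ``$\widehat K / \widehat T$ torsion-free'', which is what the pro-$\ell$ halves of Lemma \ref{lem:acl-rigid-equiv-of-Z} and Proposition \ref{prop:main-kthy} require; the only extra care is that the sums and intersections of the complete submodules involved are to be taken in the closed sense, which is already the convention of \S\ref{sec:rigid-elements}. Since all the genuine work — the ``lines-to-lines'' argument underlying Theorem \ref{thm:rigid-to-valuative}, the valuativity of $\Hb(\widehat T)$ in Proposition \ref{prop:main-kthy}, and the cohomological identification in Lemma \ref{lem:acl-rigid-equiv-of-Z} — has already been carried out, there is no substantial obstacle here. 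The one point requiring attention is the bookkeeping with $\Kl$ and $-1$, so that $\Uf_v^1$ rather than a smaller subgroup appears as $(D_v^1)^\perp$; this is precisely the place where the assumption $\mu_{2\ell} \subset K$ (equivalently $-1 \in \Kl$) enters.
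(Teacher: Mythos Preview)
Your argument is correct and follows exactly the paper's approach: the paper's proof is the single line ``This follows from Lemma \ref{lem:acl-rigid-equiv-of-Z} and Proposition \ref{prop:main-kthy},'' and you have simply unpacked the Kummer-duality bookkeeping that this line encodes. In particular, the identification $\Ib(Z)^\perp = \Hb(Z^\perp)$ via Lemma \ref{lem:acl-rigid-equiv-of-Z}, the application of Proposition \ref{prop:main-kthy}, and the dualization $(D_v^1)^\perp = \Uf_v^1$, $(I_v^1)^\perp = \Uf_v$ (using $-1 \in \Kl$) are exactly what the paper intends.
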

\begin{proof}
This follows from Lemma \ref{lem:acl-rigid-equiv-of-Z} and Proposition \ref{prop:main-kthy}.
\end{proof}

\begin{remark}
\label{rem:sub-of-acl-is-acl}
As an immediate application of Theorems \ref{thm:acl-groups}, \ref{thm:main-acl} along with Proposition \ref{prop:acl-valuation-prop}, we deduce the following non-obvious fact.
Assume that $\Char K \neq \ell$ and $\mu_{2\ell} \subset K$ as usual.
Let $A \leq \Gc_K^a$ be given, assume that $A \neq \Ib(A)$ and take $\sigma \in \Ib(A)$, $\tau \in A$.
Then $[\sigma,\tau] \in \langle \sigma^\beta \rangle$.
Similarly, we deduce that any subgroup of an ACL subgroup $A \leq \Gc_K^a$ is also ACL.
\end{remark}

\begin{thm}
\label{thm:acl-groups-with-char}
Let $K$ be a field such that $\Char K \neq \ell$ and $\mu_{2\ell} \subset K$.
Let $Z \leq \Gc_K^a$ be given and denote by $L = (K')^Z$.
Suppose there exists a subgroup $Z_0 \leq \Gc_L^a$ which maps surjectively onto $Z$ via the canonical map $\Gc_L^a \rightarrow \Gc_K^a$ such that $\Ib(Z_0) = Z_0$.
Then there exists $B \leq Z$ such that $Z/B$ is cyclic (possibly trivial), $B^\perp = H$ is valuative and, denoting $v = v_H$, one has:
\begin{itemize}
\item $Z \leq D_v^1$.
\item $B \leq I_v^1$.
\item $\Char k(v) \neq \ell$ and thus $D_v^1 = D_v$ and $I_v^1 = I_v$.
\end{itemize}

Assume furthermore that $\mu_{\ell^\infty} \subset K$.
Let $\widehat Z \leq \Gc_K^{ab} = \Pi_K^a$ be given such that $\Pi_K^a/\widehat Z$ is torsion-free and denote by $L = K\left(\sqrt[\ell^\infty]{K^\times \cap \widehat Z^\perp}\right) = (K^{ab})^{\widehat Z}$.
Suppose there exists a subgroup $\widehat Z_0 \leq \Gc_L^{ab} = \Pi_L^a$ such that $\Pi_L^a/\widehat Z_0$ is torsion-free and which maps surjecively onto $\widehat Z$ via the canonical map $\Pi_L^a \rightarrow \Pi_K^a$, such that $\Ib(\widehat Z_0) = \widehat Z_0$.
Then there exists $\widehat B \leq \widehat Z$ such that $\widehat Z/\widehat B$ is cyclic (possibly trivial), $\widehat B^\perp = \widehat H$ is valuative and, denoting $v = v_{\widehat H}$, one has:
\begin{itemize}
\item $\widehat Z \leq \widehat D_v^1$.
\item $\widehat B \leq \widehat I_v^1$.
\item $\Char k(v) \neq \ell$ and thus $\widehat D_v^1 = \widehat D_v$ and $\widehat I_v^1 = \widehat I_v$.
\end{itemize}
\end{thm}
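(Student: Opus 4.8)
This theorem is the Kummer dual of Proposition \ref{prop:rigid-kthy-with-char}, so the plan is to translate the group-theoretic hypotheses into the K-theoretic hypotheses of that proposition, apply it, and translate the conclusion back; the dictionary is Lemma \ref{lem:acl-rigid-equiv-of-Z}. I describe only the mod-$\ell$ case, the pro-$\ell$ case being obtained by the usual change of notation together with the stated torsion-free hypotheses.

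First I would put $T := Z^\perp \leq K^\times$. Since $\Ib(Z) = Z$, Lemma \ref{lem:acl-rigid-equiv-of-Z} gives $T = (\Ib(Z))^\perp = \Hb(Z^\perp) = \Hb(T)$, so $T$ is rigid; moreover, by the definition of the Kummer orthogonal, $L = (K')^Z = K(\sqrt[\ell]{T})$, which is exactly the field appearing in Proposition \ref{prop:rigid-kthy-with-char}. Next, noting $\mu_{2\ell} \subset L$ since $L \supseteq K$, I would put $T' := Z_0^\perp \leq L^\times$; applying Lemma \ref{lem:acl-rigid-equiv-of-Z} over $L$ to $\Ib(Z_0) = Z_0$ gives $T' = \Hb(T')$, and of course $\Kl \leq L^{\times\ell} \leq T'$. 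The one point needing care is the compatibility $T' \cap K^\times = T$, and this is where the surjectivity of $Z_0$ onto $Z$ is used: for $x \in K^\times$ one has $\sqrt[\ell]{x} \in K'$, so $x \in T'$ iff $\sqrt[\ell]{x} \in (L')^{Z_0} \cap K' = (K')^Z = L$ (the image of $Z_0$ under the restriction $\Gc_L^a \to \Gc_K^a$ being $Z = \Gal(K'|L)$), i.e. iff $x \in T$; equivalently, for $\tilde\sigma \in Z_0$ with image $\sigma \in Z$ one has $\langle x,\tilde\sigma\rangle_L = \langle x,\sigma\rangle_K$ because $\sigma = \tilde\sigma|_{K'}$, and one lets $\tilde\sigma$ range over $Z_0$.

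Since $-1 \in \Kl \leq T$ (here the hypothesis $\mu_{2\ell} \subset K$ is used), all hypotheses of Proposition \ref{prop:rigid-kthy-with-char} now hold, and it produces $H$ with $T \leq H \leq K^\times$, $H/T$ cyclic, $H$ valuative, and, writing $v := v_H$, $1+\mf_v \leq T$, $\Oc_v^\times \leq H$ and $\Char k(v) \neq \ell$. I would then translate back by setting $B := H^\perp \leq T^\perp = Z$: then $B^\perp = H$ is valuative; $Z/B$ is Kummer-dual to $H/T$, hence cyclic; from $1+\mf_v \leq T$ and $\Uf_v = \Oc_v^\times \cdot \Kl \leq H$, passing to orthogonals inside $\Gc_K^a$ yields $Z = T^\perp \leq D_v^1$ and $B = H^\perp \leq \Uf_v^\perp = I_v^1$; and $\Char k(v) \neq \ell$ upgrades these to $Z \leq D_v = D_v^1$ and $B \leq I_v = I_v^1$ by Proposition \ref{prop:decomp-thy-prop}. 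For the pro-$\ell$ statement one repeats the argument with $\widehat Z$, $\widehat Z_0$, $\widehat T := \widehat Z^\perp$ and $\widehat T' := \widehat Z_0^\perp$, invoking the pro-$\ell$ halves of Lemma \ref{lem:acl-rigid-equiv-of-Z}, Proposition \ref{prop:rigid-kthy-with-char} and Proposition \ref{prop:decomp-thy-prop}, and feeding in the torsion-freeness of $\Pi_K^a/\widehat Z$ and $\Pi_L^a/\widehat Z_0$ (equivalently of $\widehat K/\widehat T$ and $\widehat L/\widehat T'$) exactly where those halves require it.

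I expect no genuine obstacle here: the only delicate step is the bookkeeping that the surjection $Z_0 \twoheadrightarrow Z$ dualizes to the equality $Z_0^\perp \cap K^\times = Z^\perp$, and once this is in hand the rest is a citation or a formal consequence of the perfectness of the Kummer pairing.
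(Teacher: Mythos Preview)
Your argument is correct and is exactly the paper's approach: the paper's own proof is the single line ``This follows from Lemma \ref{lem:acl-rigid-equiv-of-Z} and Proposition \ref{prop:rigid-kthy-with-char},'' and you have spelled out the Kummer dictionary that makes that citation work, including the key point that surjectivity of $Z_0 \twoheadrightarrow Z$ dualizes to $T' \cap K^\times = T$.

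One small remark: you write ``Since $\Ib(Z) = Z$'' as though it were a hypothesis, but the theorem only assumes $\Ib(Z_0) = Z_0$. This is harmless for two reasons: first, $\Ib(Z) = Z$ does follow from $\Ib(Z_0) = Z_0$ and $Z_0 \twoheadrightarrow Z$ by functoriality of $[\cdot,\cdot]$ and $(\cdot)^\beta$ under the inclusion $\Gc_L \hookrightarrow \Gc_K$; second, you never actually need it, since Proposition \ref{prop:rigid-kthy-with-char} only asks for $T' = \Hb(T')$ and $T' \cap K^\times = T$, and records $T = \Hb(T)$ merely as a consequence.
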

\begin{proof}
This follows from Lemma \ref{lem:acl-rigid-equiv-of-Z} and Proposition \ref{prop:rigid-kthy-with-char}.
\end{proof}

\begin{thm}
\label{thm:main-acl-with-char}
Let $K$ be a field such that $\Char K \neq \ell$ and $\mu_{2\ell} \subset K$.
Let $Z \leq \Gc_K^a$ be given and assume that $\Ib(Z) \neq Z$.
Denote by $L = (K')^Z$ and assume furthermore that there exists a subgroup $Z_0 \leq \Gc_L^a$ such that $\Ib(Z_0) \leq Z_0$ map surjectively onto $\Ib(Z) \leq Z$ via the canonical map $\Gc_L^a \rightarrow \Gc_K^a$.
Then $H = \Ib(Z)^\perp$ is valuative.
Denoting $v = v_H$ one has:
\begin{itemize}
\item $Z \leq D_v^1$.
\item $\Ib(Z) = Z \cap I_v^1$.
\item $\Char k(v) \neq \ell$ and thus $D_v^1 = D_v$ and $I_v^1 = I_v$.
\end{itemize}

Assume furthermore that $\mu_{\ell^\infty} \subset K$.
Let $\widehat Z \leq \Gc_K^{ab} = \Pi_K^a$ be given such that $\Pi_K^a/\widehat Z$ is torsion-free and assume that $\Ib(\widehat Z) \neq \widehat Z$.
Denote by $L = K\left(\sqrt[\ell^\infty]{K^\times \cap \widehat Z^\perp}\right) = (K^{ab})^{\widehat Z}$ and assume furthermore that there exists a subgroup $\widehat Z_0 \leq \Gc_L^{ab} = \Pi_L^a$ such that $\Pi_L^a/\widehat Z_0$ is torsion-free and $\Ib(\widehat Z_0) \leq \widehat Z_0$ map surjectively onto $\Ib(\widehat Z) \leq \widehat Z$ via the canonical map $\Pi_L^a \rightarrow \Pi_K^a$.
Then $\widehat H = \Ib(\widehat Z)^\perp$ is valuative.
Denoting $v = v_H$ one has:
\begin{itemize}
\item $\widehat Z \leq \widehat D_v^1$.
\item $\Ib(\widehat Z) = \widehat Z \cap \widehat I_v^1$.
\item $\Char k(v) \neq \ell$ and thus $\widehat D_v^1 = \widehat D_v$ and $\widehat I_v^1 = \widehat I_v$.
\end{itemize}

\end{thm}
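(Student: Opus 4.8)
The plan is to deduce this theorem from Proposition~\ref{prop:main-kthy-with-char} via Kummer theory, exactly as Theorems~\ref{thm:acl-groups}, \ref{thm:main-acl} and \ref{thm:acl-groups-with-char} are deduced from their K-theoretic counterparts; the bridge is Lemma~\ref{lem:acl-rigid-equiv-of-Z}. First I would set $T = Z^\perp \leq K^\times$. Since $\mu_{2\ell} \subset K$ one has $-1 \in \Kl \leq T$, and Lemma~\ref{lem:acl-rigid-equiv-of-Z} gives $\Ib(Z)^\perp = \Hb(T)$, so the hypothesis $\Ib(Z) \neq Z$ is equivalent to $T \neq \Hb(T)$. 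Write $H = \Hb(T) = \Ib(Z)^\perp$. Kummer theory identifies $L = (K')^Z$ with $K(\sqrt[\ell]{T})$, matching the field $L$ occurring in Proposition~\ref{prop:main-kthy-with-char}.

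Next I would translate the surjectivity hypothesis on $Z_0$. Put $T' = Z_0^\perp \leq L^\times$, so $\Kl \leq L^{\times\ell} \leq T'$. Let $\phi \colon \Gc_L^a \to \Gc_K^a$ be the canonical map; it is the Kummer dual of the natural map $K^\times/\ell \to L^\times/\ell$, and hence for any subgroup $W \leq \Gc_L^a$ one has $\phi(W)^\perp = K^\times \cap W^\perp$. Applying this to $W = Z_0$ and using $\phi(Z_0) = Z$ gives $T = K^\times \cap T'$. Applying it to $W = \Ib(Z_0)$, together with Lemma~\ref{lem:acl-rigid-equiv-of-Z} over $L$ (which has $\Char L \neq \ell$ and $\mu_{2\ell} \subset L$), giving $\Ib(Z_0)^\perp = \Hb(T')$, and using $\phi(\Ib(Z_0)) = \Ib(Z)$, gives $H = K^\times \cap \Hb(T')$. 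These are precisely the two equalities that Proposition~\ref{prop:main-kthy-with-char} requires.

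Invoking Proposition~\ref{prop:main-kthy-with-char} then yields: $H$ is valuative, and for $v = v_H$ one has $1+\mf_v \leq T$, $T \cdot \Oc_v^\times = H$, and $\Char k(v) \neq \ell$. Translating back through Kummer duality: since $(D_v^1)^\perp = (1+\mf_v)\cdot \Kl$, the inclusion $1+\mf_v \leq T = Z^\perp$ reads $Z \leq D_v^1$; and since $(I_v^1)^\perp = \Oc_v^\times \cdot \Kl$ and $\Kl \leq T$, the identity $T \cdot \Oc_v^\times = H = \Ib(Z)^\perp$ reads $(Z \cap I_v^1)^\perp = \Ib(Z)^\perp$, i.e. $\Ib(Z) = Z \cap I_v^1$. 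Finally $\Char k(v) \neq \ell$ together with Proposition~\ref{prop:decomp-thy-prop} gives $D_v^1 = D_v$ and $I_v^1 = I_v$. The pro-$\ell$ statement is handled identically using the pro-$\ell$ halves of Lemma~\ref{lem:acl-rigid-equiv-of-Z} and Proposition~\ref{prop:main-kthy-with-char}, carrying the torsion-freeness of $\Pi_L^a/\widehat Z_0$ and $\Pi_K^a/\widehat Z$ wherever orthogonals are taken.

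The only genuine work is the bookkeeping in the middle step: one must check that ``$\Ib(Z_0) \leq Z_0$ map surjectively onto $\Ib(Z) \leq Z$'' unwinds, under Kummer duality and Lemma~\ref{lem:acl-rigid-equiv-of-Z}, to exactly the pair $T = K^\times \cap T'$ and $\Hb(T) = K^\times \cap \Hb(T')$, and (in the pro-$\ell$ case) that passing between $L$-side and $K$-side submodules introduces no torsion. Everything else is a formal transport of Propositions~\ref{prop:main-kthy-with-char} and~\ref{prop:decomp-thy-prop}; I expect that transport to be routine, so the argument collapses to essentially one line.
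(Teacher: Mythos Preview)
Your proposal is correct and follows exactly the paper's approach: the paper's proof is the single line ``This follows from Lemma~\ref{lem:acl-rigid-equiv-of-Z} and Proposition~\ref{prop:main-kthy-with-char},'' and you have faithfully unpacked precisely that deduction via Kummer duality. The bookkeeping you flag (that the surjectivity hypothesis on $\Ib(Z_0)\leq Z_0$ translates to $T=K^\times\cap T'$ and $\Hb(T)=K^\times\cap\Hb(T')$) is exactly what is needed and is handled correctly.
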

\begin{proof}
This follows from Lemma \ref{lem:acl-rigid-equiv-of-Z} and Proposition \ref{prop:main-kthy-with-char}.
\end{proof}

To finish this subsection, we state the Kummer dual of Theorem \ref{thm:main-kthy-compatibility}.
In particular, this gives a group-theoretical recipe to recover {\bf precisely} the subgroups $I_v^1 \leq D_v^1$ resp. $\widehat I_v^1 \leq \widehat D_v^1$ for valuations $v \in \Vc_K$ resp. $v \in \widehat \Vc_K$ using only the group-theoretical data encoded in $\Gc_K^c$ resp. $\Pi_K^c$.

\begin{defn}
\label{defn:Ac}
Let $K$ be a field such that $\Char K \neq \ell$ and $\mu_{2\ell} \subset K$.
We denote by $\Ac_K$ the collection of subgroups $Z \leq \Gc_K^a$ such that
\begin{itemize}
\item $Z \neq \Ib(Z)$.
\item $\Ib(Z) \leq Z$ is maximal; i.e. whenever $Z \leq Z' \leq \Gc_K^a$ and $Z \neq Z'$, one has $\Ib(Z) \not\leq \Ib(Z')$.
In particular, this implies that $\Ib(Z) \neq 1$ or $Z = \Gc_K^a$.
\end{itemize}

On the other hand, assume that $\mu_{\ell^\infty} \subset K$.
Denote by $\widehat \Ac_K$ the collection of all subgroups $\widehat Z \leq \Gc_K^{ab} = \Pi_K^a$ such that:
\begin{itemize}
\item $\widehat Z \neq \Ib(\widehat Z)$ and $\Pi_K^a/\widehat Z$ is torsion-free.
\item $\Ib(\widehat Z) \leq \widehat Z$ is maximal; i.e. whenever $\widehat Z \leq \widehat Z' \leq \Pi_K^a$, $\Pi_K^a/\widehat Z'$ is torsion-free and $\widehat Z \neq \widehat Z'$, one has $\Ib(\widehat Z) \not\leq \Ib(\widehat Z')$.
\end{itemize}
\end{defn}

\begin{remark}
\label{rem:centralizer}
Assume that $\mu_{2\ell} \subset K$.
Using Lemma \ref{lem:acl-rigid-equiv-of-Z}, we deduce that the map $Z \mapsto Z^\perp$ induces a bijection between $\Ac_K$ and $\Tc_K$ ($Z^\perp$ denotes the Kummer orthogonal of $Z \leq \Gc_K^a$); similarly, the map $\widehat Z \mapsto \widehat Z^\perp$ induces a bijection between $\widehat \Ac_K$ and $\widehat \Tc_K$ if further $\mu_{\ell^\infty} \subset K$.

On the other hand, we can explicitly describe the elements of $\Ac_K$ and $\widehat \Ac_K$ in a simpler way.
First let us discuss the mod-$\ell$ case -- assume that $\mu_{2\ell} \subset K$.
Let $A \leq \Gc_K^a$ be any given ACL subgroup, and denote by 
\[ C(A) = \{\sigma\in \Gc_K^a \ : \ \forall \tau \in A, \ [\sigma,\tau] \in A^\beta \}. \]
If $\Ib(C(A)) \neq C(A)$, then $C(A) \in \Ac_K$.
Indeed, one has $A \leq \Ib(C(A)) \leq C(A)$; let $C(A) \leq Z'$ be given such that $C(A) \neq Z'$ and assume that $\Ib(C(A)) \leq \Ib(Z')$.
Then, in particular, $A \leq \Ib(Z')$.
But this implies that $Z' \leq C(A)$ by Theorem \ref{thm:main-acl} and Proposition \ref{prop:acl-valuation-prop} -- note that $Z' \neq \Ib(Z')$ and recall that for all $\sigma \in \Ib(Z')$ and $\tau \in Z'$ one has $[\sigma,\tau] \in \langle \sigma^\beta \rangle$.
Conversely, let $Z \in \Ac_K$ be given.
Then the maximality condition on $\Ib(Z) \leq Z$ implies that $Z = C(\Ib(Z))$ (here one uses Theorem \ref{thm:main-acl} and Proposition \ref{prop:acl-valuation-prop} again).
In particular, we deduce that $Z \in \Ac_K$ if and only if $Z = C(\Ib(Z))$ and $Z \neq \Ib(Z)$ -- i.e. $Z$ is the ``almost-commuting-liftable-centralizer'' of its ``almost-commuting-liftable-center.''

On the other hand, assume that $\mu_{\ell^\infty} \subset K$. For $\widehat A \leq \Pi_K^a$ such that $\Pi_K^a/\widehat A$ is torsion-free and $\widehat A$ is CL, we denote by
\[ C(\widehat A) = \{ \sigma\in \Pi_K^a \ : \ \forall \tau \in \widehat A, \ [\sigma,\tau] = 0 \}. \]
A similar argument as above shows that $C(\widehat A) \in \widehat \Ac_K$ if $\Ib(C(\widehat A)) \neq C(\widehat A)$.
Moreover, $\widehat Z \in \widehat \Ac_K$ if and only if $C(\Ib(\widehat Z)) = \widehat Z$ and $\widehat Z \neq \Ib(\widehat Z)$ -- i.e. $\widehat Z$ is the ``commuting-liftable-centralizer'' of its ``commuting-liftable-center.'' 
\end{remark}

\begin{thm}
\label{thm:main-ACL-compatibility}
Let $K$ be a field such that $\Char K \neq \ell$ and $\mu_{2\ell} \subset K$.
Then the map $\Vc_K \rightarrow \Ac_K$ defined by $v \mapsto D_v^1$ is a bijection which is compatible with coarsening of $v$ as in Theorem \ref{thm:main-kthy-compatibility}.
Moreover, for $v \in \Vc_K$ one has $\Ib(D_v^1) = I_v^1$.

Assume furthermore that $\mu_{\ell^\infty} \subset K$.
Then the map $\widehat \Vc_K \rightarrow \widehat \Ac_K$ defined by $v \mapsto \widehat D_v^1$ is a bijection which is compatible with coarsening of $v$ as in Theorem \ref{thm:main-kthy-compatibility}.
Moreover, for $v \in \widehat \Vc_K$ one has $\Ib(\widehat D_v^1) = \widehat I_v^1$.
\end{thm}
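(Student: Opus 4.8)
The plan is to recognize this theorem as the Kummer dual of Theorem~\ref{thm:main-kthy-compatibility}, transported through Lemma~\ref{lem:acl-rigid-equiv-of-Z} and the bijection of Remark~\ref{rem:centralizer}. First I would record the Kummer identifications underpinning everything. Since $\mu_{2\ell}\subset K$ forces $-1\in\Kl$, one has $(K(\sqrt[\ell]{1+\mf_v}))^{\times\ell}\cap K^\times=\langle 1+\mf_v,-1,\Kl\rangle=\Uf_v^1$ and $(K(\sqrt[\ell]{\Oc_v^\times}))^{\times\ell}\cap K^\times=\Oc_v^\times\cdot\Kl=\Uf_v$, so that $D_v^1=(\Uf_v^1)^\perp$ and $I_v^1=\Uf_v^\perp$; in the pro-$\ell$ case the analogous identities $\widehat D_v^1=(\Uc_v^1)^\perp$ and $\widehat I_v^1=\Uc_v^\perp$ hold, the torsion-freeness conditions of Definition~\ref{defn:Ufs} being exactly what makes these submodules lie in the range of the Kummer correspondence. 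As $(\bullet)^\perp$ is an order-reversing involution between the groups $T$ with $\Kl\le T\le K^\times$ and the subgroups of $\Gc_K^a$ (resp. between the submodules $\widehat T\le\widehat K$ with $\widehat K/\widehat T$ torsion-free and the subgroups $\widehat A\le\Pi_K^a$ with $\Pi_K^a/\widehat A$ torsion-free), each of these identities may be read in either direction.

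For the bijection: Theorem~\ref{thm:main-kthy-compatibility} supplies a bijection $\Vc_K\to\Tc_K$, $v\mapsto\Uf_v^1$, and Remark~\ref{rem:centralizer} (which itself rests on Lemma~\ref{lem:acl-rigid-equiv-of-Z} and Proposition~\ref{prop:acl-valuation-prop}) supplies a bijection $\Ac_K\to\Tc_K$, $Z\mapsto Z^\perp$. Composing the former with the inverse of the latter --- which, since $(\bullet)^\perp$ is involutive, is again $T\mapsto T^\perp$ --- yields a bijection $\Vc_K\to\Ac_K$ sending $v\mapsto(\Uf_v^1)^\perp=D_v^1$, which is precisely the asserted map (in particular its image is automatically $\Ac_K$, so no separate verification that $D_v^1\in\Ac_K$ is needed). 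The pro-$\ell$ statement follows in the same way from the pro-$\ell$ halves of Theorem~\ref{thm:main-kthy-compatibility} and Remark~\ref{rem:centralizer}.

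The identity $\Ib(D_v^1)=I_v^1$ for $v\in\Vc_K$ is then immediate: by Lemma~\ref{lem:acl-rigid-equiv-of-Z}, $\Ib(D_v^1)^\perp=\Hb((D_v^1)^\perp)=\Hb(\Uf_v^1)$, and the proof of Theorem~\ref{thm:main-kthy-compatibility} establishes $\Hb(\Uf_v^1)=\Uf_v=(I_v^1)^\perp$; applying $(\bullet)^\perp$ once more gives $\Ib(D_v^1)=I_v^1$. The hatted version of the same two-line computation gives $\Ib(\widehat D_v^1)=\widehat I_v^1$.

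Finally, compatibility with coarsening is the Kummer dual of the commutative square in Theorem~\ref{thm:main-kthy-compatibility}: for fixed $v\in\Vc_K$, the interval of subgroups $\Uf_v^1\le T\le\Uf_v$ on the $K^\times$ side is identified, via the isomorphism $\Uf_v/\Uf_v^1\cong k(v)^\times/\langle k(v)^{\times\ell},-1\rangle$ of Definition~\ref{defn:Ufs}, with the Kummer theory of $k(v)$, and this identification transports verbatim under $(\bullet)^\perp$ to the subgroups of $\Gc_K^a$ lying between $I_v^1$ and $D_v^1$ and to $\Gc_{k(v)}^a$; one then invokes the naturality of Kummer theory with respect to the residue map $\Oc_v^\times\twoheadrightarrow k(v)^\times$ together with the standard behaviour of decomposition and inertia groups under composition of valuations recorded in \S~\ref{sec:hilb-decomp-theory}. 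I expect the only genuinely delicate point to be the bookkeeping --- matching each Kummer orthogonal to the correct decorated subgroup, and in the pro-$\ell$ case keeping every torsion-free-quotient hypothesis intact through the successive applications of $(\bullet)^\perp$, $\Ib$ and $\Hb$ --- rather than any new mathematical input, since all the substantive content has already been supplied by \S~\ref{sec:rigid-elements} and Lemma~\ref{lem:acl-rigid-equiv-of-Z}.
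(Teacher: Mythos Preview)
Your proposal is correct and follows essentially the same approach as the paper, which dispatches the theorem in one line as an immediate consequence of Theorem~\ref{thm:main-kthy-compatibility} and Lemma~\ref{lem:acl-rigid-equiv-of-Z}. Your write-up is simply a careful unpacking of that one line: the Kummer identifications $D_v^1=(\Uf_v^1)^\perp$, $I_v^1=\Uf_v^\perp$, the passage through the bijection $\Ac_K\leftrightarrow\Tc_K$ of Remark~\ref{rem:centralizer} (itself a consequence of Lemma~\ref{lem:acl-rigid-equiv-of-Z}), and the dualization of the coarsening square are exactly what the paper intends by ``follows immediately.''
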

\begin{proof}
This follows immediately from Theorem \ref{thm:main-kthy-compatibility} and Lemma \ref{lem:acl-rigid-equiv-of-Z}.
\end{proof}

\begin{remark}
\label{rem:A-equals-Z-of-A}
Arguing as in Remark \ref{rem:T-euqals-Z-of-T}, we deduce the following if $\mu_{2\ell} \subset K$.
Suppose $Z \leq \Gc_K^a$ is an ACL subgroup which satisfies the maximality conditions of Definition \ref{defn:Ac}.
Then there exists a valuation $v$ of $K$ such that $\Gamma_v$ contains no non-trivial $\ell$-divisible convex subgroups, $k(v)^\times/\ell$ is cyclic and $Z = D_v^1$.

Similarly, if $\mu_{\ell^\infty} \subset K$ and $\widehat Z \leq \Gc_K^{ab} = \Pi_K^a$ is a CL subgroup which satisfies the maximality condition, then there exists a valuation $v$ of $K$ such that $\Gamma_v$ contains no non-trivial $\ell$-divisible convex subgroups, $\widehat {k(v)}$ is cyclic and $\widehat Z = \widehat D_v^1$.
\end{remark}

\section{Applications}
\label{sec:applications}

\subsection{Detecting Inertia/Decomposition Groups}
\label{sec:detect-inert-groups}

Throughout this subsection we let $K$ be a field such that $\Char K \neq \ell$ and $\mu_{\ell} \subset K$.
If $\Char K > 0$, then $\Char k(v) = \Char K$ for all valuations $v$ of $K$.
Thus, combining Proposition \ref{prop:decomp-thy-prop} and Theorem \ref{thm:main-ACL-compatibility}, we immediately deduce the following corollary:
\begin{cor}
\label{cor:pos-char-group-thy-recipe}
Suppose $K$ is a field such that $\ell \neq \Char K > 0$ and $\mu_{2\ell} \subset K$.
Then the elements of $Z \in \Ac_K$ are precisely the decomposition groups of valuations $v \in \Vc_K$, and the bijection $\Vc_K \rightarrow \Ac_K$ of Theorem \ref{thm:main-ACL-compatibility} is precisely $v \mapsto D_v$.
Moreover, $\Ib(D_v) = I_v$ for $v \in \Vc_K$.

Assume furthermore $\mu_{\ell^\infty} \subset K$.
Then the elements $\widehat Z \in \widehat \Ac_K$ are precisely the decomposition groups of valuations $v \in \widehat \Vc_K$, and the bijection $\widehat \Vc_K \rightarrow \widehat \Ac_K$ of Theorem \ref{thm:main-ACL-compatibility} is precisely $v \mapsto \widehat D_v$.
Moreover, $\Ib(\widehat D_v) = \widehat I_v$ for $v \in \widehat \Vc_K$.
\end{cor}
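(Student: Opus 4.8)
The plan is to obtain this corollary as a formal consequence of Theorem~\ref{thm:main-ACL-compatibility} together with Proposition~\ref{prop:decomp-thy-prop}, the only extra input being the elementary observation that positive characteristic propagates to all residue fields. First I would note that if $\Char K = p > 0$ with $p \neq \ell$, then for \emph{every} valuation $v$ of $K$ the residue field $k(v)$ again has characteristic $p$: the prime field $\F_p$ is contained in $\Oc_v$ and meets $\mf_v$ only in $0$, so $\F_p$ embeds into $k(v)$. In particular $\Char k(v) \neq \ell$ for all $v$, including the trivial valuation.

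Given this, Proposition~\ref{prop:decomp-thy-prop} applies to every $v$: since $\Char K \neq \ell$, $\mu_\ell \subset K$ and $\Char k(v) \neq \ell$, we get $D_v^1 = D_v$ and $I_v^1 = I_v$ inside $\Gc_K^a$, and likewise $\widehat D_v^1 = \widehat D_v$ and $\widehat I_v^1 = \widehat I_v$ inside $\Pi_K^a$ when $\mu_{\ell^\infty} \subset K$. Thus the maps $v \mapsto D_v^1$ and $v \mapsto D_v$ from $\Vc_K$ into the subgroup lattice of $\Gc_K^a$ coincide, and similarly in the pro-$\ell$ setting for $v \mapsto \widehat D_v^1$ and $v \mapsto \widehat D_v$.

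Now I would simply quote Theorem~\ref{thm:main-ACL-compatibility}: the map $v \mapsto D_v^1$ is a bijection $\Vc_K \to \Ac_K$ and $\Ib(D_v^1) = I_v^1$ for every $v \in \Vc_K$. Substituting the identifications from the previous paragraph, the bijection $\Vc_K \to \Ac_K$ is exactly $v \mapsto D_v$, its image consists precisely of the decomposition groups of the valuations in $\Vc_K$, and $\Ib(D_v) = I_v$. The same substitution in the pro-$\ell$ half of Theorem~\ref{thm:main-ACL-compatibility} gives the stated assertions for $\widehat \Vc_K$, $\widehat \Ac_K$, $\widehat D_v$ and $\widehat I_v$.

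There is no real obstacle here; the content of the corollary is entirely carried by Proposition~\ref{prop:decomp-thy-prop} and Theorem~\ref{thm:main-ACL-compatibility}, and the only point one checks by hand — that positive characteristic forces $\Char k(v) \neq \ell$ — is immediate. If anything deserves a second glance it is the bookkeeping that the compatibility with coarsenings asserted in Theorem~\ref{thm:main-ACL-compatibility} (via Theorem~\ref{thm:main-kthy-compatibility}) is inherited verbatim, but this too is automatic once $D_v^1 = D_v$ and $I_v^1 = I_v$ have been recorded for all $v$.
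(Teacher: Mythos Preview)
Your proposal is correct and follows exactly the paper's own argument: the paper states just before the corollary that $\Char K > 0$ forces $\Char k(v) = \Char K \neq \ell$ for all $v$, and then says the corollary follows by combining Proposition~\ref{prop:decomp-thy-prop} with Theorem~\ref{thm:main-ACL-compatibility}. Your write-up simply spells out this two-line deduction in full.
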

This can be seen as a group theoretical recipe which detects the inertia/decomposition subgroups $I_v \leq D_v$ inside $\Gc_K^a$ using only the group theoretical data encoded in $\Gc_K^c$, in the case where $\ell \neq \Char K > 0$ and $\mu_{2\ell} \subset K$.
If $\mu_{\ell^\infty} \subset K$ and $\ell \neq \Char K > 0$ this gives a group theoretical recipe which detects the inertia/decomposition subgroups $\widehat I_v \leq \widehat D_v$ of $\Pi_K^a$ using the group theoretical data encoded in $\Pi_K^c$.

\begin{example}
\label{example:char-0-ff}
Consider $X = \Abb^1_{\Z[\mu_{\ell^2}]}$, a scheme over $\Z[\mu_{\ell^2}]$.
Denote by $K = \Q(\mu_{\ell^2},x)$ the function field of $X$ and consider the valuation $v$ associated to the vertical divisor of $X$ over $\ell \in \Spec\Z[\mu_{\ell^2}]$.
Arguing in a similar way to Example \ref{example:div-vals}, we have $v \in \Vc_K$.

On the other hand, $\Gal(k(v)(\ell)|k(v)) = \Gc_{k(v)}(\ell)$ is a non-trivial pro-$\ell$ group.
In particular, $D_v \neq I_v$ as subgroups of $\Gc_K^a$ while $I_v^1 \leq D_v^1 \leq I_v$ by Remark \ref{rem:charkv-is-ell}.
A similar situation holds in the pro-$\ell$ case for the scheme $X = \Abb^1_{\Z[\mu_{\ell^\infty}]}$ considered as a scheme over $\Z[\mu_{\ell^\infty}]$.
\end{example}

In particular, we deduce that an analogous statement as Corollary \ref{cor:pos-char-group-thy-recipe} is false for general fields of characteristic 0.
On the other hand, we can detect precisely which $v \in \Vc_K$ resp. $\widehat \Vc_K$ have $\Char k(v) = \ell$ using Theorem \ref{thm:main-acl-with-char}.
First, we need a lemma which can be seen as a converse to Theorems \ref{thm:acl-groups-with-char} and \ref{thm:main-acl-with-char}.

\begin{lem}
\label{lem:liftable-acl-char}
Let $K$ be a field such that $\Char K \neq \ell$ and $\mu_{2\ell} \subset K$.
Let $Z \leq \Gc_K^a$ be given and take $L$ any field such that $L \cap K' = (K')^Z$.
Assume that $\Ib(Z) \neq Z$ and denote by $H = \Ib(Z)^\perp$.
Recall that $H$ is valuative, denote by $v = v_H$ and assume that $\Char k(v) \neq \ell$.
Then there exists $Z_0 \leq \Gc_L^a$ such that $\Ib(Z_0) \leq Z_0$ map surjectively onto $\Ib(Z) \leq Z$ via the canonical map $\Gc_L^a \rightarrow \Gc_K^a$.

Assume furthermore that $\mu_{\ell^\infty} \subset K$.
Let $\widehat Z \leq \Pi_K^a$ be given such that $\Pi_K^a / \widehat Z$ is torsion-free, and take $L$ any field such that $L \cap K^{ab} = (K^{ab})^{\widehat Z}$.
Assume that $\Ib(\widehat Z) \neq \widehat Z$ and denote by $\widehat H = \Ib(\widehat Z)^\perp$.
Recall that $\widehat H$ is valuative, denote by $v = v_{\widehat H}$ and assume that $\Char k(v) \neq \ell$.
Then there exists $\widehat Z_0 \leq \Pi_L^a$ such that $\Pi_L^a / \widehat Z_0$ is torsion-free and $\Ib(\widehat Z_0) \leq \widehat Z_0$ map surjectively onto $\Ib(\widehat Z) \leq \widehat Z$ via the canonical map $\Pi_L^a \rightarrow \Pi_K^a$.
\end{lem}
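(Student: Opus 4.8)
The plan is to dualize, via Kummer theory, to a statement purely about submodules of $L^\times$ (resp.\ $\widehat L$), and then to construct the required submodule by pulling up residue-field data along a prolongation of $v$ to $L$. Set $T := Z^\perp\leq K^\times$, so $\langle\Kl,-1\rangle\leq T$; by Lemma~\ref{lem:acl-rigid-equiv-of-Z} one has $\Ib(Z)^\perp = \Hb(T) = H$, and by Theorem~\ref{thm:main-acl} $1+\mf_v\subseteq T$ and $H = T\cdot\Oc_v^\times$ with $v = v_H$. The hypothesis $L\cap K' = (K')^Z = K(\sqrt[\ell]{T})$ says exactly $K^\times\cap\tl L = T$, and the canonical map $\phi\colon\Gc_L^a\to\Gc_K^a$ is Kummer-dual to $K^\times/\Kl\to L^\times/\tl L$, so $\phi(W^\perp) = (K^\times\cap W)^\perp$ for every $W$ with $\tl L\leq W\leq L^\times$. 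Hence it suffices to produce $T'$ with $\tl L\leq T'\leq L^\times$, $K^\times\cap T' = T$ and $K^\times\cap\Hb(T') = H$: then $Z_0 := (T')^\perp$ satisfies $\phi(Z_0) = (K^\times\cap T')^\perp = Z$, and, since $\Ib(Z_0)^\perp = \Hb(Z_0^\perp) = \Hb(T')$ and $\Ib(Z)^\perp = \Hb(T) = H$ by Lemma~\ref{lem:acl-rigid-equiv-of-Z} (over $L$ and over $K$ respectively), also $\phi(\Ib(Z_0)) = (K^\times\cap\Hb(T'))^\perp = \Ib(Z)$. This is precisely the datum fed into Proposition~\ref{prop:main-kthy-with-char}, so the lemma is its converse; the pro-$\ell$ case is formally identical, with ``torsion-free'' in place of the evident finiteness conditions.

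To build $T'$, extend $v = v_H$ to a valuation $w$ of $L$. Since $\Char k(v)\neq\ell$ we have $w(\ell) = 0$, hence $\Char k(w)\neq\ell$, so \S~\ref{sec:hilb-decomp-theory} and Proposition~\ref{prop:decomp-thy-prop} apply to $(L,w)$ as well: $L(\sqrt[\ell]{1+\mf_w})$ is the decomposition field and $L(\sqrt[\ell]{\Oc_w^\times})$ the inertia field of $w$ inside $L'$. Let $\bar T_v\leq k(v)^\times$ be the subgroup generated by $\tl{k(v)}$ and by the residues of the elements of $T\cap\Oc_v^\times$, and set
\[
 T' \;:=\; \big\langle\, \tl L,\ 1+\mf_w,\ \{\,u\in\Oc_w^\times : \bar u\in \bar T_v\cdot\tl{k(w)}\,\}\,\big\rangle \;\le\; L^\times ,
\]
the preimage in $L^\times$ of the subgroup of $L^\times/\tl L$ generated by $\overline{1+\mf_w}$ and by residue-lifts of $\bar T_v$. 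Then $T\subseteq\tl L\subseteq T'$, $-1\in T'$, $w(T') = \ell\Gamma_w$, and --- by Proposition~\ref{prop:main-kthy} over $L$, with Lemma~\ref{lem:rigid-in-res-field} matching rigidity of $T'$ with rigidity in $k(w)$ --- $\Hb(T')$ is valuative with associated valuation a coarsening of $w$, so that $\res(\Oc_v^\times)\subseteq\Oc_w^\times\subseteq\Hb(T')$ and hence $\res(H)\subseteq\Hb(T')$.

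It remains to verify $K^\times\cap T' = T$ (the inclusion $\supseteq$ being clear) and $K^\times\cap\Hb(T')\subseteq H$ (the reverse being clear from $\res(H)\subseteq\Hb(T')$); this is the crux. Given $x\in K^\times\cap T'$, write $x = \lambda^\ell s$ with $s$ a product of the displayed generators; since $T\subseteq\tl L$, every relevant $\ell$-th root lies in $L(\sqrt[\ell]{1+\mf_w})$, so $K(\sqrt[\ell]{x})\subseteq L(\sqrt[\ell]{1+\mf_w})$, the decomposition field of $w$. Inspecting the induced prolongation of $v$ to $K(\sqrt[\ell]{x})$ reduces one to the case $v(x) = 0$, where $\bar x\in k(v)^\times\cap\big(\bar T_v\cdot\tl{k(w)}\big) = \bar T_v\cdot\big(k(v)^\times\cap\tl{k(w)}\big)$; the key fact --- forced by $K^\times\cap\tl L = T$ together with $\Char k(v)\neq\ell$ and the Hilbert-decomposition description of the residue field of $K(\sqrt[\ell]{T}) = L\cap K'$ over $v$ as $k(v)\big(\sqrt[\ell]{\text{residues of }T\cap\Oc_v^\times}\big)$ --- is the ``no new $\ell$-th roots of residues'' identity $k(v)^\times\cap\tl{k(w)} = \tl{k(v)}$; granting it, $\bar x\in\bar T_v$, and lifting back using $1+\mf_v\subseteq T$ gives $x\in T$. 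The same computation with $\Oc_w^\times$ in place of $1+\mf_w$ gives $K^\times\cap\Hb(T')\subseteq H$, hence equality. I expect the main obstacle to be exactly this: transferring the hypothesis from $L/K$ down to $k(w)/k(v)$, together with the value-group bookkeeping $\Gamma_v\subseteq\Gamma_w$ for $x$ of nonzero valuation (the ``ramified part'' of $T$, which genuinely occurs since $v(H) = v(T)$ need not vanish). This is where $\Char k(v)\neq\ell$ is indispensable --- it is what makes $I_v^1 = I_v$, $D_v^1 = D_v$ and the explicit Kummer description of inertia/decomposition available, so that $K(\sqrt[\ell]{x})\subseteq L(\sqrt[\ell]{1+\mf_w})$ genuinely encodes triviality of the residue of $x$ up to tame ramification; carrying out that (routine but delicate) bookkeeping, perhaps after first replacing $w$ by the coarsening attached to $\Uf_w$, is the bulk of the remaining work.
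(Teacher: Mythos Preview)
Your plan is the Kummer dual of the paper's argument, but you have overbuilt $T'$ and thereby created the very bookkeeping you flag as the obstacle. The paper works entirely on the Galois side: pick a prolongation $w$ of $v$ to $L$ and set $Z_0 := D_w$. Since $\Char k(v)\neq\ell$ forces $\Char k(w)\neq\ell$, Proposition~\ref{prop:decomp-thy-prop} gives $D_v = D_v^1$, $I_v = I_v^1$ (and likewise over $L$), and Theorem~\ref{thm:main-acl} gives $Z\leq D_v$ and $\Ib(Z) = Z\cap I_v$. Standard functoriality of decomposition and inertia under restriction $\Gc_L^a\to\Gc_K^a$ (the content of \S\ref{sec:hilb-decomp-theory}) then yields $\phi(D_w) = D_v\cap Z = Z$ and $\phi(I_w) = I_v\cap Z = \Ib(Z)$. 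Since $I_w\leq\Ib(D_w)$ by Proposition~\ref{prop:acl-valuation-prop}, and the image of $\Ib(D_w)$ lands in $\Ib(Z)$ by functoriality of $[\cdot,\cdot]$ and $(\cdot)^\beta$ along $\Gc_L^c\to\Gc_K^c$, one gets $\phi(\Ib(Z_0)) = \Ib(Z)$ exactly. That is the whole proof.

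Dually this amounts to taking $T' = \Uf_w^1$, not your larger group with residue lifts of $\bar T_v$ grafted on. With $T' = \Uf_w^1$ the two verifications $K^\times\cap T' = T$ and $K^\times\cap\Hb(T') = H$ are literally the dual statements $\phi(D_w) = Z$ and $\phi(\Ib(D_w)) = \Ib(Z)$, via the identity $K^\times\cap W^\perp = (\phi(W))^\perp$ together with Lemma~\ref{lem:acl-rigid-equiv-of-Z} over $L$; no residue-field identity such as $k(v)^\times\cap\tl{k(w)} = \tl{k(v)}$ is needed, and no value-group comparison enters. Your construction would eventually go through, but the extra generators force you to prove exactly the delicate transfer statements you identify as unfinished; dropping them collapses the argument to the paper's.
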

\begin{proof}
Below we show the mod-$\ell$ case as the pro-$\ell$ case is essentially the same.
This follows from the discussion of \S~\ref{sec:hilb-decomp-theory} as follows.
Take a prolongation $w|v$ to $L|K$.
The canonical map $\Gc_L^a \rightarrow \Gc_K^a$ induces corresponding maps $D_w \rightarrow D_v$ and $I_w \rightarrow I_v$.
The canonical map $D_w \rightarrow D_v$ has image $Z$ and the canonical map $I_w \rightarrow I_v$ has image $\Ib(Z)$ -- indeed, the assumption $\Char k(v) \neq \ell$ implies that $Z \leq D_v$ and $\Ib(Z) = Z \cap I_v$.
On the other hand, $\Ib(D_w) = I_w$, as required.
\end{proof}

\begin{remark}
Using an argument similar to that in Lemma \ref{lem:liftable-acl-char} we deduce the following:
Let $K$ be a field such that $\Char K \neq \ell$ and $\mu_{2\ell} \subset K$.
Take $Z \leq \Gc_K^a$ such that $\Ib(Z) = Z$.
By Theorem \ref{thm:acl-groups}, there exists $Z' \leq Z$ such that $Z/Z'$ is cyclic, $H = (Z')^\perp$ is valuative and $1+\mf_{v_H} \subset Z^\perp$.
Denote by $L = K(\sqrt[\ell]{Z^\perp}) = (K')^{Z}$ and assume furthermore that there exists an $Z'$ as above such that $\Char k(v_H) \neq \ell$.
Then there exists $B \leq \Gc_L^a$ such that $B = \Ib(B)$ and $B$ maps surjectively onto $Z$ via the canonical map $\Gc_L^a \rightarrow \Gc_K^a$.

Assume furthermore that $\mu_{\ell^\infty} \subset K$.
Take $\widehat Z \leq \Gc_K^{ab} = \Pi_K^a$ such that $\Ib(\widehat Z) = \widehat Z$.
By Theorem \ref{thm:acl-groups}, there exists $\widehat Z' \leq \widehat Z$ such that $\widehat Z/\widehat Z'$ is cyclic, $\widehat H  = (\widehat Z')^\perp$ is valuative and $1+\mf_{v_{\widehat H}} \subset \widehat Z^\perp$.
Denote by $L = K\left(\sqrt[\ell^\infty]{K^\times \cap \widehat Z^\perp}\right) = (K^{ab})^{\widehat Z}$ and assume furthermore that there exists $\widehat Z'$ as above such that $\Char k(v_{\widehat H}) \neq \ell$.
Then there exists $\widehat B \leq \Gc_L^{ab} = \Pi_L^a$ such that $\Pi_L^a/\widehat B$ is torsion-free, $\widehat B = \Ib(\widehat B)$ and $\widehat B$ maps surjectively onto $\widehat Z$ via the canonical map $\Pi_L^a \rightarrow \Pi_K^a$.
\end{remark}

For $v \in \Vc_K$, denote by $L_v = K(\sqrt[\ell]{1+\mf_v}) = (K')^{D_v^1}$; recall using Theorem \ref{thm:main-ACL-compatibility} that
\begin{itemize}
\item $D_v^1 \in \Ac_K$ and the map $v \mapsto D_v^1$ is a bijection between $\Vc_K$ and $\Ac_K$.
\item $\Ib(D_v^1) = I_v^1$.
\end{itemize}
In the pro-$\ell$ context for $v \in \widehat \Vc_K$ we denote by $L_{\widehat v} = K(\sqrt[\ell^\infty]{1+\mf_v}) = (K^{ab})^{\widehat D_v^1}$; recall using Theorem \ref{thm:main-ACL-compatibility} that
\begin{itemize}
\item $\widehat D_v^1 \in \widehat \Ac_K$ and the map $v \mapsto \widehat D_v^1$ is a bijection between $\widehat \Vc_K$ and $\widehat \Ac_K$.
\item $\Ib(\widehat D_v^1) = \widehat I_v^1$.
\end{itemize}

\begin{cor}
\label{cor:char-0-group-thy-recipe}
Suppose $K$ is a field such that $\Char K \neq \ell$ and $\mu_{2\ell} \subset K$.
Let $Z \in \Ac_K$ be given and take $v \in \Vc_K$ such that $Z = D_v^1$.
Then $\Char k(v) \neq \ell$ if and only if there exists $B \leq \Gc_{L_v}^a$ such that $\Ib(B) \leq B$ map surjectively onto $\Ib(Z) \leq Z$ via the canonical map $\Gc_{L_v}^a \rightarrow \Gc_K^a$.
Moreover, if these equivalent conditions hold then $Z = D_v$ and $\Ib(Z) = I_v$.

Let $\widehat Z \in \widehat\Ac_K$ be given and take $v \in \widehat\Vc_K$ such that $\widehat Z = \widehat D_v^1$.
Then $\Char k(v) \neq \ell$ if and only if there exists $\widehat B \leq \Gc_{L_{\widehat v}}^{ab} = \Pi_{L_{\widehat v}}^a$ such that $\Pi_{L_{\widehat v}}^a/\widehat B$ is torsion-free and $\Ib(\widehat B) \leq \widehat B$ map surjectively onto $\Ib(\widehat Z) \leq \widehat Z$ via the canonical map $\Pi_{L_{\widehat v}}^a \rightarrow \Pi_K^a$.
Moreover, if these equivalent conditions hold then $\widehat Z = \widehat D_v$ and $\Ib(\widehat Z) = \widehat I_v$.
\end{cor}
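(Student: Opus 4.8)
The plan is to deduce the corollary directly from Theorem~\ref{thm:main-acl-with-char} together with its partial converse Lemma~\ref{lem:liftable-acl-char}, using the dictionary furnished by Theorem~\ref{thm:main-ACL-compatibility} and Kummer theory. First I would fix the identifications. Since $Z = D_v^1 = \Gal(K'|K(\sqrt[\ell]{1+\mf_v}))$ we have $(K')^Z = K(\sqrt[\ell]{1+\mf_v}) = L_v$, and as $L_v \subseteq K'$ this also gives $L_v \cap K' = L_v = (K')^Z$. By Theorem~\ref{thm:main-ACL-compatibility}, $\Ib(Z) = I_v^1 = \Gal(K'|K(\sqrt[\ell]{\Oc_v^\times}))$, so by Kummer theory $H := \Ib(Z)^\perp = \Oc_v^\times\cdot\Kl = \Uf_v$; and since $v \in \Vc_K$, the value group $\Gamma_v$ has no non-trivial $\ell$-divisible convex subgroups, so $v = v_{\Uf_v} = v_H$ by Lemma~\ref{lem:v_H} (cf.\ Definition~\ref{defn:v_H}). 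Finally $\Ib(Z)\neq Z$ because $Z \in \Ac_K$.

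For the forward implication, assume $\Char k(v) \neq \ell$. Then $H = \Ib(Z)^\perp$ is valuative with $v_H = v$ and $\Char k(v_H) \neq \ell$, and $L_v \cap K' = (K')^Z$; hence Lemma~\ref{lem:liftable-acl-char}, applied with $L = L_v$, yields a subgroup $B := Z_0 \leq \Gc_{L_v}^a$ with $\Ib(B) \leq B$ mapping surjectively onto $\Ib(Z) \leq Z$ via $\Gc_{L_v}^a \to \Gc_K^a$, which is exactly what is claimed. Conversely, suppose such a $B$ exists. Because $L_v = (K')^Z$, Theorem~\ref{thm:main-acl-with-char} applies with $Z_0 = B$: it gives that $H = \Ib(Z)^\perp$ is valuative and that the associated valuation $v_H$ has $\Char k(v_H) \neq \ell$. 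Since $v_H = v$, this is precisely $\Char k(v) \neq \ell$.

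It remains to verify the ``moreover'' clause: when these equivalent conditions hold, Proposition~\ref{prop:decomp-thy-prop} gives $D_v^1 = D_v$ and $I_v^1 = I_v$, whence $Z = D_v$ and, using $\Ib(Z) = I_v^1$ from Theorem~\ref{thm:main-ACL-compatibility} once more, $\Ib(Z) = I_v$. The pro-$\ell$ statement is proved in exactly the same way, replacing $K'$, $\Gc_K^a$, $\Uf_v$, $I_v^1$, $D_v^1$ throughout by $K^{ab}$, $\Pi_K^a$, $\Uc_v$, $\widehat I_v^1$, $\widehat D_v^1$, noting $(K^{ab})^{\widehat Z} = L_{\widehat v}$, and invoking the pro-$\ell$ halves of Theorems~\ref{thm:main-acl-with-char}, \ref{thm:main-ACL-compatibility}, Lemma~\ref{lem:liftable-acl-char} and Proposition~\ref{prop:decomp-thy-prop}.

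Since every nontrivial input is quoted, I do not expect a genuine obstacle; the only thing requiring care is the bookkeeping in the first paragraph --- in particular checking that $(K')^Z$ is literally $L_v$ and that $v$ coincides with the valuation $v_{\Ib(Z)^\perp}$ attached abstractly to $Z$ --- since it is this matching that lets the ``black boxes'' Theorem~\ref{thm:main-acl-with-char} and Lemma~\ref{lem:liftable-acl-char} be applied with $L = L_v$.
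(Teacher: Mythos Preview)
Your proposal is correct and follows precisely the paper's own approach: the paper's proof simply cites Theorem~\ref{thm:main-acl-with-char} and Lemma~\ref{lem:liftable-acl-char} for the equivalence and Proposition~\ref{prop:decomp-thy-prop} for the ``moreover'' clause, and you have correctly unpacked the bookkeeping (in particular the identifications $(K')^Z = L_v$, $\Ib(Z)^\perp = \Uf_v$, and $v_{\Ib(Z)^\perp} = v$) needed to see that those black boxes apply.
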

\begin{proof}
Theorem \ref{thm:main-acl-with-char} and Lemma \ref{lem:liftable-acl-char} imply the equivalence.
The last statement then follows from Proposition \ref{prop:decomp-thy-prop}.
\end{proof}

This corollary can be seen as a group theoretical recipe which detects the decomposition/inertia subgroups in $\Gc_K^a$ resp $\Pi_K^a$ of valuations $v \in \Vc_K$ resp $\widehat \Vc_K$ such that $\Char k(v) \neq \ell$ using group theoretical data encoded in a Galois group which is larger than $\Gc_K^c$ resp. $\Pi_K^c$, which we denote by $\Gc_K^m$ resp. $\Gc_K^M$ as defined below.

First assume that $\mu_\ell \subset K$.
We denote by $K^c$ the Galois extension of $K$ such that $\Gal(K^c|K) = \Gc_K^c$.
Denote by $K^m$ the copositum of fields of the form $L^c$ for fields $K|L|K'$ and $\Gc_K^m := \Gal(K^m|K)$.

Similarly, if $\mu_{\ell^\infty} \subset K$, we denote by $K^C$ the Galois extension of $K$ such that $\Gal(K^C|K) = \Pi_K^c$.
Denote by $K^M$ the compositum of all fields of the form $L^C$ for fields $K|L|K^{ab}$ and $\Gc_K^M := \Gal(K^M|K)$.
Then Corollary \ref{cor:char-0-group-thy-recipe} provides a group-theoretical recipe to detect precisely the decomposition/inertia subgroups in $\Gc_K^a$ resp. $\Pi_K^a$ of valuations $v \in \Vc_K$ resp. $v \in \widehat \Vc_K$ such that $\Char k(v) \neq 0$ from the group-theoretical structure of $\Gc_K^m$ resp. $\Gc_K^M$.

\subsection{Maximal pro-$\ell$ Galois Groups of Fields}
\label{sec:maximal-pro-ell}

To conclude this paper, we show an interesting application to the structure of the maximal pro-$\ell$ Galois group of fields which is motivated by the example and corollaries of the previous subsection.
Let $K$ be a field such that $\Char K \neq \ell$ and $\mu_{2\ell} \subset K$.
To simplify the statement of the proposition that follows, we will denote by $\Ac_K^0$ the set consisting of all $Z \in \Ac_K$ which satisfy the equivalent conditions of Corollary \ref{cor:char-0-group-thy-recipe}.
In particular, the map $v \mapsto D_v^1$ is a bijection between the set of all $v \in \Vc_K$ such that $\Char k(v) \neq \ell$ and $\Ac_K^0$. 
Of course, one can formulate a similar statement in the pro-$\ell$ situation, which we omit, using $\Pi_K^c$ and $\Gc_K^M$; however the statement of the corollary below is in any case stronger than the analogous pro-$\ell$ one.

\begin{cor}
\label{cor:application-char}
Let $K$ be a field such that $\Char K \neq \ell$ and $\mu_{2\ell} \subset K$.
Then $K$ has a valuation $v \in \Vc_K$ such that $\Char k(v) = \ell$ if and only if there exists a subgroup $Z \in \Ac_K \smallsetminus \Ac_K^0$.
In particular, if $K$ satisfies these equivalent conditions, there does not exist any field $F$ of positive characteristic such that $\mu_{2\ell} \subset F$ and $\Gc_K^m \cong \Gc_F^m$ (an abstract isomorphism of pro-$\ell$ groups).
\end{cor}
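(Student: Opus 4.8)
The plan is to obtain the stated equivalence as a formal consequence of Theorem~\ref{thm:main-ACL-compatibility} together with Corollary~\ref{cor:char-0-group-thy-recipe}, and then to deduce the ``in particular'' clause by observing that both $\Ac_K$ and $\Ac_K^0$ are invariants of the abstract pro-$\ell$ group $\Gc_K^m$, while no field of positive characteristic containing $\mu_{2\ell}$ admits a valuation of residue characteristic $\ell$.

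First I would prove the equivalence. By Theorem~\ref{thm:main-ACL-compatibility} the assignment $v\mapsto D_v^1$ is a bijection $\Vc_K\to\Ac_K$. Given $v\in\Vc_K$, put $Z=D_v^1\in\Ac_K$; Corollary~\ref{cor:char-0-group-thy-recipe} asserts precisely that $Z\in\Ac_K^0$ if and only if $\Char k(v)\neq\ell$, so $Z\in\Ac_K\smallsetminus\Ac_K^0$ if and only if $\Char k(v)=\ell$. Running over all $v\in\Vc_K$ yields: $K$ admits some $v\in\Vc_K$ with $\Char k(v)=\ell$ if and only if $\Ac_K\smallsetminus\Ac_K^0$ is nonempty.

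Next I would check that $\Ac_K$ and $\Ac_K^0$ depend only on the group-theoretic structure of $\Gc_K^m$. Since $K'\subset K^c\subset K^m\subset K(\ell)$ and $K'$ respectively $K^c$ is the maximal $(\Z/\ell)$-abelian respectively $(\Z/\ell)$-abelian-by-central extension of $K$, one recovers $\Gc_K^a=\Gc_K^m/(\Gc_K^m)^{(2)}$ and $\Gc_K^c=\Gc_K^m/(\Gc_K^m)^{(3)}$ with their canonical projection, hence the maps $[\bullet,\bullet]$ and $(\bullet)^\beta$, hence $\Ib$ on subgroups of $\Gc_K^a$, hence the collection $\Ac_K$, all from $\Gc_K^m$ alone. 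For $\Ac_K^0$ the point is that the membership condition of Corollary~\ref{cor:char-0-group-thy-recipe} is equally intrinsic: given $Z\in\Ac_K$, let $H\leq\Gc_K^m$ be the preimage of $Z$ under $\Gc_K^m\twoheadrightarrow\Gc_K^a$, so $H=\Gal(K^m|L_v)$ with $L_v=(K')^Z$ satisfying $K\subset L_v\subset K'$; since $L_v^c$ is one of the fields whose compositum defines $K^m$, one gets $\Gc_{L_v}^a=H/H^{(2)}$ and $\Gc_{L_v}^c=H/H^{(3)}$, and the canonical map $\Gc_{L_v}^a\to\Gc_K^a$ is induced by $H\hookrightarrow\Gc_K^m$. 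Consequently the existence of a $B\leq\Gc_{L_v}^a$ with $\Ib(B)\leq B$ mapping onto $\Ib(Z)\leq Z$ is a group-theoretic property of $\Gc_K^m$, so $\Ac_K^0$ is an invariant of $\Gc_K^m$.

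Finally I would conclude: an abstract isomorphism $\Gc_K^m\cong\Gc_F^m$ induces a bijection $\Ac_K\to\Ac_F$ restricting to $\Ac_K^0\to\Ac_F^0$, hence to $\Ac_K\smallsetminus\Ac_K^0\to\Ac_F\smallsetminus\Ac_F^0$. If $K$ satisfies the equivalent conditions then $\Ac_K\smallsetminus\Ac_K^0\neq\emptyset$, so $\Ac_F\smallsetminus\Ac_F^0\neq\emptyset$, and the equivalence applied to $F$ produces $w\in\Vc_F$ with $\Char k(w)=\ell$; but $\Char F>0$ and $\mu_{2\ell}\subset F$ force $\Char F\neq\ell$, so $\Char k(w)=\Char F\neq\ell$, a contradiction, and no such $F$ exists. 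The hard part will be the middle step — verifying rigorously that $\Ac_K^0$, and in particular the auxiliary data $\Gc_{L_v}^a$, $\Gc_{L_v}^c$ and the transition homomorphism of Corollary~\ref{cor:char-0-group-thy-recipe}, is genuinely encoded in $\Gc_K^m$; this is exactly why one passes from $\Gc_K^c$ to the larger quotient $\Gc_K^m$.
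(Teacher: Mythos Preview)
Your proposal is correct and follows essentially the same approach as the paper. The paper's own proof merely cites Corollaries~\ref{cor:pos-char-group-thy-recipe} and~\ref{cor:char-0-group-thy-recipe} together with ``the discussion of \S~\ref{sec:detect-inert-groups}''; your three steps (the equivalence via the bijection $\Vc_K\to\Ac_K$ and Corollary~\ref{cor:char-0-group-thy-recipe}, the intrinsic recoverability of $\Ac_K$ and $\Ac_K^0$ from $\Gc_K^m$, and the positive-characteristic obstruction) are precisely what that discussion amounts to when unpacked, and your verification that $\Gc_{L_v}^a$ and $\Gc_{L_v}^c$ arise as $H/H^{(2)}$ and $H/H^{(3)}$ for the preimage $H$ of $Z$ in $\Gc_K^m$ is exactly the content of the paragraph defining $\Gc_K^m$ at the end of \S~\ref{sec:detect-inert-groups}.
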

\begin{proof}
This follows from Corollaries \ref{cor:pos-char-group-thy-recipe}, \ref{cor:char-0-group-thy-recipe} and the discussion of \S~\ref{sec:detect-inert-groups}.
\end{proof}

\begin{remark}
\label{rem:gcm-remark}
Since the assignment $\Gc_K \mapsto \Gc_K^m$ is functorial, we deduce that $\Gc_K^m \not\cong \Gc_F^m$ implies $\Gc_K \not\cong \Gc_F$.
In particular, the statement of the corollary above still holds replacing $\Gc_K^m$ resp. $\Gc_F^m$ with $\Gc_K$ resp. $\Gc_F$.
\end{remark}

\begin{example}
\label{ex:pro-ell-example}
Let $K = \Q(\mu_{\ell^2}, t)$.
Denote by $v \in \Vc_K$ the valuation associated to the vertical divisor of $\Abb^1_{\Z[\mu_{\ell^2}]}$ over $\ell \in \Spec \Z[\mu_{\ell^2}]$.
Then $D_v^1 \in \Ac_K \smallsetminus \Ac_K^0$ since $\Char k(v) = \ell$.
We deduce that there does not exist any field $F$ of positive characteristic such that $\Gc_F \cong \Gc_K$.
\end{example}

To conclude this paper, we present yet another application to the structure of the maximal pro-$\ell$ Galois group, in the case where $\mu_{\ell^\infty} \subset K$, which describes commuting elements in $\Gc_K$ versus commuting elements of its small quotients.

\begin{cor}
\label{cor:pro-ell-comm-pairs}
Let $K$ be a field such that $\Char K \neq \ell$ and $\mu_{\ell^\infty} \subset K$.
Let $\sigma,\tau \in \Gc_K^a$ be given.
Assume first that $\Char K > 0$.
Then the following are equivalent:
\begin{itemize}
\item There exist lifts $\tilde\sigma,\tilde\tau \in \Gc_K^c$ of $\sigma,\tau$ such that $\tilde\sigma\tilde\tau = \tilde\tau\tilde\sigma$ (namely $[\sigma,\tau] = 0$).
\item There exist lifts $\tilde\sigma,\tilde\tau \in \Gc_K$ of $\sigma,\tau$ such that $\tilde\sigma\tilde\tau = \tilde\tau\tilde\sigma$.
\end{itemize}
Assume, on the other hand, that $\Char K = 0$.
Then the following are equivalent:
\begin{itemize}
\item There exist lifts $\tilde\sigma,\tilde\tau \in \Gc_K^m$ of $\sigma,\tau$ such that $\tilde\sigma\tilde\tau = \tilde\tau\tilde\sigma$.
\item There exist lifts $\tilde\sigma,\tilde\tau \in \Gc_K$ of $\sigma,\tau$ such that $\tilde\sigma\tilde\tau = \tilde\tau\tilde\sigma$.
\end{itemize}
\end{cor}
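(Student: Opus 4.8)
The plan is to prove, in each of the two cases, the non-trivial implication ``commuting in the small quotient $\Rightarrow$ commuting in $\Gc_K$''; the reverse is immediate since $\Gc_K$ surjects onto $\Gc_K^c$ and onto $\Gc_K^m$, and a surjection carries a commuting pair to a commuting pair. Write $A = \langle \sigma,\tau\rangle \leq \Gc_K^a$. If $\sigma,\tau$ are $\Z/\ell$-dependent, then $A = \langle\rho\rangle$ is procyclic; lifting $\rho$ to any $\tilde\rho \in \Gc_K$ and taking the appropriate integral powers of $\tilde\rho$ produces commuting lifts of $\sigma,\tau$, so we may assume $\sigma,\tau$ are $\Z/\ell$-independent, i.e.\ $A$ has $\Z/\ell$-rank $2$. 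Since $[\bullet,\bullet]$ is bilinear, the hypothesis ``$[\sigma,\tau]=0$ in $\Gc_K^c$'' forces $[\bullet,\bullet]$ to vanish on $A$, i.e.\ $\Ib(A)=A$ ($A$ is ACL).

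Next I would isolate a common endgame. Suppose we have produced a valuation $v$ of $K$ with $\Char k(v) \neq \ell$, together with $B \leq A$ such that $A/B$ is cyclic, $A \leq D_v$ and $B \leq I_v$ (note $B \neq 0$). Pick a $\Z/\ell$-basis $\beta,\gamma$ of $A$ with $\beta \in B$, and fix a prolongation $w$ of $v$ to $K(\ell)$. Since $\Char k(v)\neq\ell$ one has $\mu_{\ell^\infty}\subset k(v)$, so the cyclotomic character of $k(v)$ is trivial and the split sequence $1 \to I_{w|v} \to D_{w|v} \to \Gc_{k(v)} \to 1$ realizes $D_{w|v}$ as $I_{w|v}\times\Gc_{k(v)}$; in particular the abelian group $I_{w|v}$ is central in $D_{w|v}$. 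As $I_{w|v}\twoheadrightarrow I_v$ and $D_{w|v}\twoheadrightarrow D_v$, choose $\tilde\beta \in I_{w|v}$ over $\beta$ and $\tilde\gamma \in D_{w|v}$ over $\gamma$; then $\tilde\beta\tilde\gamma = \tilde\gamma\tilde\beta$, so $\langle\tilde\beta,\tilde\gamma\rangle$ is abelian. Writing $\sigma$ and $\tau$ as $\Z/\ell$-combinations of $\beta,\gamma$, lifting the coefficients to $\Z$, and taking the corresponding monomials in $\tilde\beta,\tilde\gamma$, we get lifts of $\sigma,\tau$ in $\Gc_K$ inside $\langle\tilde\beta,\tilde\gamma\rangle$, hence commuting. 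So in both cases it remains only to produce such a $v$.

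If $\Char K > 0$, every valuation $v$ of $K$ has $\Char k(v) = \Char K \neq \ell$, so Theorem \ref{thm:acl-groups} applied to $Z = A$ yields $B \leq A$ with $A/B$ cyclic, $A\leq D_v^1$ and $B\leq I_v^1$ for $v = v_{B^\perp}$, and Proposition \ref{prop:decomp-thy-prop} upgrades these to $A \leq D_v$, $B \leq I_v$; the endgame applies. If $\Char K = 0$, $\Char k(v)$ may equal $\ell$, and this is where the hypothesis on $\Gc_K^m$ is used. Set $L = (K')^A$. Then $K' \subseteq L'$, and $K^c/L$ is $(\Z/\ell)$-abelian-by-central (as $L$ is intermediate in $K'/K$), so $K^c \subseteq L^c \subseteq K^m$; consequently there are canonical maps $\Gc_L^c \to \Gc_K^c$ and $\Gc_L^a \twoheadrightarrow A$ compatible with $[\bullet,\bullet]$ and with restriction to $K'$. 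Restricting commuting lifts $\tilde\sigma,\tilde\tau \in \Gc_K^m$ of $\sigma,\tau$ to $L^c$ gives commuting lifts in $\Gc_L^c$ of elements $\bar\sigma,\bar\tau\in\Gc_L^a$ whose images in $A$ are $\sigma,\tau$; hence $Z_0 := \langle\bar\sigma,\bar\tau\rangle$ is an ACL subgroup of $\Gc_L^a$ surjecting onto $A$. Theorem \ref{thm:acl-groups-with-char} then yields a valuation $v$ with $\Char k(v)\neq\ell$, $A \leq D_v$ and $B \leq I_v$ with $A/B$ cyclic, and the endgame concludes.

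The main obstacle I expect is the bookkeeping in the characteristic-$0$ case: verifying the field inclusions $K'\subseteq L'$ and $K^c \subseteq L^c \subseteq K^m$ and the attendant functoriality of the groups $\Gc_{(-)}^a$, $\Gc_{(-)}^c$ and of the commutator pairing, so that a single commuting pair of lifts in $\Gc_K^m$ genuinely furnishes the subgroup $Z_0 \leq \Gc_L^a$ with $\Ib(Z_0)=Z_0$ needed to invoke Theorem \ref{thm:acl-groups-with-char}. All of the valuation-theoretic content is already packaged in Theorems \ref{thm:acl-groups} and \ref{thm:acl-groups-with-char}, and the passage from commutativity inside a decomposition group to commuting lifts in $\Gc_K$ uses nothing beyond the triviality of the cyclotomic character, i.e.\ the hypothesis $\mu_{\ell^\infty}\subset K$.
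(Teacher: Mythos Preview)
Your proof is correct and follows essentially the same route the paper sketches: you invoke Theorem~\ref{thm:acl-groups} (respectively Theorem~\ref{thm:acl-groups-with-char}) to produce a valuation $v$ with $\Char k(v)\neq\ell$, and then use the Hilbert decomposition theory of \S\ref{sec:hilb-decomp-theory}---specifically the centrality of $I_{w|v}$ in $D_{w|v}$ once $\mu_{\ell^\infty}\subset K$---to manufacture the commuting lifts in $\Gc_K$. The paper's one-line proof cites the same ingredients; you have simply spelled out the bookkeeping (the inclusions $K^c\subseteq L^c\subseteq K^m$ and the resulting construction of $Z_0$) that the paper leaves implicit.
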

\begin{proof}
This follows from the Theorems in \S\ref{sec:main-results-acl}, along with Lemma \ref{lem:liftable-acl-char} and the discussion of \S \ref{sec:hilb-decomp-theory}.
\end{proof}

\bibliographystyle{amsalpha}
\bibliography{refs} %make sure reference location is correct!!!

\end{document}